\providecommand{\U}[1]{\protect\rule{.1in}{.1in}}
\newtheorem{definition}{Definition}
\newtheorem{theorem}{Theorem}
\newtheorem{lemma}[theorem]{Lemma}
\newtheorem{conjecture}[theorem]{Conjecture}
\newtheorem{notation}{Notation}
\newtheorem{example}{Example}
\newtheorem{remark}{Remark}
\newtheorem{algorithm}{Algorithm}
\begin{document}

\title{Lower Bounds for Maximum Gap in\\(Inverse) Cyclotomic Polynomials}
\author{Mary Ambrosino, Hoon Hong, Eunjeong Lee}
\maketitle

\begin{abstract}
The maximum gap $g(f)$ of a polynomial $f$ is the maximum of the differences
(gaps) between two consecutive exponents that appear in $f$. Let $\Phi_{n}$
and $\Psi_{n}$ denote the $n$-th cyclotomic and $n$-th inverse cyclotomic
polynomial, respectively. In this paper, we give several lower bounds for
$g(\Phi_{n})$ and $g(\Psi_{n})$, where $n$ is the product of odd primes. We
observe that they are very often exact. We also give an exact expression for
$g(\Psi_{n})$ under a certain condition. Finally we conjecture an exact
expression for $g(\Phi_{n})$ under a certain condition.

\end{abstract}

\section{Introduction}

The $n$-th cyclotomic and $n$-th inverse cyclotomic polynomials are defined as follows%

\[
\Phi_{n}(x)=\prod_{\substack{1\leq k\leq n\\(k,n)=1}}\left(  x-e^{2\pi
i\frac{k}{n}}\right)  \hspace{40pt}\Psi_{n}(x)=\prod_{\substack{1\leq k\leq
n\\(k,n)\neq1}}\left(  x-e^{2\pi i\frac{k}{n}}\right)
\]
For example, we have%
\begin{align*}
\Phi_{15}(x)  &  =1-x+x^{3}-x^{4}+x^{5}-x^{7}+x^{8}\\
\Psi_{15}(x)  &  =-1-x-x^{2}+x^{5}+x^{6}+x^{7}%
\end{align*}

\noindent There have been extensive studies on the coefficients of cyclotomic
polynomials
\cite{Lehmer1936,Beiter1978,Bachman2003,Dresden2004,Kaplan2007,Gallot2009,
Gallot2009_2,Thangadurai2000,Bzdega2010,Gallot2011,Fintzen2011,
Bzdega2012,Moree2012_2,Cobeli2013,Fouvry2013, Bzdega2014,Bzdega2014_2}, and
more recently, on inverse cyclotomic polynomials
\cite{Moree2009,Bzdega2010,Bzdega2014_2}.

In \cite{Hong2012}, a study was initiated on their exponents, in particular on
the maximum gap $g$, that is, the largest difference between consecutive
exponents: for example, $g(\Phi_{15})=2$ since $2$ is the maximum among $1-0$,
$3-1$, $4-3$, $5-4$, $7-5$, $8-7$. The original motivation came from elliptic
curve cryptography; the computing time of the $\text{Ate}_{i}$ pairing over
elliptic curves depends on the maximum gap of the inverse cyclotomic
polynomials whose degree are decided from the parameter of the elliptic
curves~\cite{Zhao2008,Lee2009,Thangadurai2000,Hong2013}. However the problem
of finding the maximum gap is interesting on its own and its study can be
viewed as a first step toward the detailed understanding of the sparsity
structure of $\Phi_{n}$ and $\Psi_{n}$.

One can restrict the problem to the case when $n$ is a product of odd primes,
because all other cases can be trivially reduced to it (section 2
of~\cite{Hong2012}). Thus, let us assume that $n=p_{1}\cdots p_{k}$ where
$p_{1}<\cdots<p_{k}$ are odd primes. It is obvious that $g(\Phi_{p_{1}%
})=g(\Psi_{p_{1}})=1$. It is also obvious that $g(\Psi_{p_{1}p_{2}}%
)=p_{2}-(p_{1}-1)$. Hence, the simplest non-trivial cases are $g(\Phi
_{p_{1}p_{2}})$ and $g(\Psi_{p_{1}p_{2}p_{3}})$. In~\cite{Hong2012}, it was
shown that $g(\Phi_{p_{1}p_{2}})=p_{1}-1$ and that $g(\Psi_{p_{1}p_{2}p_{3}%
})=2p_{2}p_{3}-\psi(p_{1}p_{2}p_{3})$ under a certain mild condition, where
$\psi(n)=\deg(\Psi_{n})$. Since then, several simpler or more insightful
proofs were found along with other interesting properties
\cite{Moree2012,Zhang2015,Camburu2015}.

Naturally, the next challenge is to find general expressions for $g(\Phi_{n})$
and $g(\Psi_{n})$ where $n$ is the product of an \emph{arbitrary} number of
odd primes. However, after several years of attempts, we have not yet found
any general expressions, due to combinatorial blowup in the number of cases to
consider. Thus, we propose to consider instead a weaker challenge: find
expressions for \emph{lower bounds} of $g(\Phi_{n})$ and $g(\Psi_{n})$. The
weaker challenge is still useful for the original motivation from elliptic
curve cryptography.

Thus, in this paper, we tackle the weaker challenge of finding expressions for
lower bounds. The main contributions (precisely stated in
Section~\ref{sec:results}) are as follows.

\begin{enumerate}
\item We provide four expressions ($\alpha^{\pm},\beta^{\pm}$, $\gamma^{\pm}$
and $\delta^{-}$) for lower bounds (Theorems~\ref{thm:alpha},~\ref{thm:beta}, \ref{thm:gamma}
and \ref{thm:delta}). These expressions were discovered by carefully
inspecting and finding patterns among the maximum gaps of many cyclotomic and
inverse cyclotomic polynomials. The four expressions are easy to compute.
Furthermore, numerous computer experiments indicate that the combination
(maximum) of the four expressions is \emph{very often} exact
(Section~\ref{sec:quality_alpha_beta_gamma_delta}).

\item We abstract the four expressions into a single general expression
$\varepsilon^{\pm}$ (Theorem~\ref{thm:epsilon}). The general expression was
discovered by observing that each of the four expressions can be rewritten as
the difference of two numbers, say $u$ and $l$, where $u$ is a certain divisor
of $n$ and $l$ is a signed sum of several other divisors of $n$. We also
observed that there is indeed a gap between $x^{l}$ and $x^{u}$ in the
polynomials, which led to an idea for proving the general expression. The
general expression takes more time to compute, since it captures many other
gaps that are not captured by the four expressions. As a result,
$\varepsilon^{\pm}$ is always greater than or equal to $\alpha^{\pm}%
,\ \beta^{\pm}$, $\gamma^{\pm}$ and $\delta^{-}$. Indeed, numerous computer
experiments indicate that it is \emph{almost always} exact
(Section~\ref{sec:quality_e}).

\item We provide a sufficient condition that $g(\Psi_{n})=\delta^{-}%
$~(Theorem~\ref{thm:condition_delta}). It is a straightforward generalization
of a result in \cite{Hong2012} for the case $k=3$. We also show that, for
every fixed $p_{1}$, the sufficient condition holds \textquotedblleft almost
always\textquotedblright\ in a certain sense.

\item Finally we conjecture that $g(\Phi_{p_{1}\cdots p_{k}})=\varphi
(p_{1}\cdots p_{k-1})$ if and only if $p_{k}>p_{1}\cdots p_{k-1}$
(Conjecture~\ref{conj:condition_phi}). It is a natural generalization of the
result in \cite{Hong2012}: $g(\Phi_{p_{1}p_{2}})=p_{1}-1=\varphi(p_{1})$. The
conjecture has been already verified for $m=p_{1}\cdots p_{k-1}<1000$ and
arbitrary $p_{k}$ (Theorem~\ref{thm:verified}). The verification technique is
based on a structural result that $g(\Phi_{mp_{k}})$ only depends on $m$ and
$\mathrm{rem}(p_{k},m)$~(Theorem~\ref{thm:modulo}). Thus, given $m$, we only
need to check finitely many $p_{k}$ values in order to check the conjecture
for infinitely many~$p_{k}$. We organized it into an algorithm
(Algorithm~\ref{alg:modulo}) and ran it for all odd square-free $m<1000$.
\end{enumerate}

\noindent The paper is structured as follows: In Section~\ref{sec:results}, we
precisely state the lower bounds and the conjecture informally described
above. In Section~\ref{sec:example}, we illustrate each bound using small
examples. In Section~\ref{sec:quality}, we report experimental findings on the
quality of the bounds (how often they are exact). In Section~\ref{sec:proof},
we prove the lower bounds. In Section~\ref{sec:support}, we provide supporting
evidence for the conjecture.

\section{Main Results}

\label{sec:results}

In this section, we precisely state the main results of this paper. From now
on, let $n=p_{1}\cdots p_{k}$ where $p_{1}<\cdots<p_{k}$ are odd primes.
Recall several standard notations. For a square-free $d$, $\varphi
(d)=\deg(\Phi_{d})$, $\psi(d)=\deg(\Psi_{d})$, $\omega(d)=$ number of prime
factors of $d$, and $\mu(d)=(-1)^{\omega(d)}$. For an integer $i$,
$\rho\left(  i\right)  $ is the parity, that is $\left(  -1\right)  ^{i}$. We
formally define the maximum gap as follows:

\begin{definition}
[Maximum gap]Let $f(x)=c_{1}x^{\nu_{1}}+\cdots+c_{t}x^{\nu_{t}}$ where
$c_{1},\ldots,c_{t}\neq0$ and $0\leq\nu_{1}<\cdots<\nu_{t}$. Then the maximum
gap of $f$, denoted $g(f)$, is defined as follows
\[
g(f)=\max_{1\leq i<t}(\nu_{i+1}-\nu_{i})
\]
if $t\neq1$, and $g(f)=0$ if $t=1$.
\end{definition}

\noindent Now we are ready to state the four lower bounds for (inverse)
cyclotomic polynomials.

\begin{theorem}
[Special bound $\alpha^{\pm}$]\label{thm:alpha} We have $g(\Phi_{n})\geq
\alpha^{+}(n)$ and $g(\Psi_{n})\geq\alpha^{-}(n)$ where%
\[
\alpha^{\pm}(n)=\max_{\substack{1\leq r<k\\\rho\left(  k-r\right)  =\mp
1}}\left(  p_{r}-\varphi(p_{1}\cdots p_{r-1})\right)
\]

\end{theorem}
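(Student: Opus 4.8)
The plan is to prove both inequalities by producing, for each admissible index $r$, one explicit gap of the relevant polynomial of width at least $p_r-\varphi(p_1\cdots p_{r-1})$; since $g$ is a maximum over gaps and $\alpha^{\pm}$ is a maximum over $r$, this is enough, and we may assume $p_r>\varphi(p_1\cdots p_{r-1})$ (otherwise the claimed quantity is nonpositive and there is nothing to prove). I will place the upper end of the gap at the divisor $u=p_r$ and show the gap extends down past $l=\varphi(p_1\cdots p_{r-1})=\sum_{d\mid p_1\cdots p_{r-1}}\mu\!\left(\frac{p_1\cdots p_{r-1}}{d}\right) d$, the promised ``$u$ minus $l$'' with $l$ a signed sum of divisors of $n$. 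Writing $P$ for $\Phi_n$ or $\Psi_n$ and $[x^m]P$ for the coefficient of $x^m$, it suffices to prove (a) $[x^{p_r}]P\neq 0$ and (b) $[x^m]P=0$ for all $\varphi(p_1\cdots p_{r-1})<m<p_r$: since $[x^0]P=\pm 1$, the largest exponent below $p_r$ carrying a nonzero coefficient is then at most $\varphi(p_1\cdots p_{r-1})$, and (a) forces the next nonzero term up at $p_r$, giving a gap of width at least $p_r-\varphi(p_1\cdots p_{r-1})$.

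The main tool is the power-series identity $\Phi_n(x)=\prod_{d\mid n}(1-x^d)^{\mu(n/d)}$, together with $\Psi_n(x)=(x^n-1)/\Phi_n(x)\equiv -\prod_{d\mid n}(1-x^d)^{-\mu(n/d)}\pmod{x^n}$. Because a factor $(1-x^d)^{\pm 1}$ is congruent to $1$ modulo $x^{p_r+1}$ whenever $d>p_r$, both (a) and the entire interval in (b) are determined by the truncated product over divisors $d\le p_r$. Setting $\sigma:=(-1)^{k-1}$ for $\Phi_n$ and $\sigma:=(-1)^{k}$ for $\Psi_n$, the prime divisors $p_1,\dots,p_r$ each contribute exponent $\sigma$ and the factor $1-x$ contributes exponent $-\sigma$, so in the generic case $p_1p_2>p_r$ (where $1,p_1,\dots,p_r$ are the only divisors $\le p_r$) one has $P\equiv \pm(1-x)^{-\sigma}\prod_{i=1}^{r}(1-x^{p_i})^{\sigma}\pmod{x^{p_r+1}}$.

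When $\sigma=-1$ this reads $[x^m]P=\pm(R(m)-R(m-1))$, where $R(m)$ counts representations of $m$ by the parts $p_1,\dots,p_{r-1}$ (the part $p_r$ being too large for $m<p_r$), so (b) becomes the constancy of $R$ on $[\varphi(p_1\cdots p_{r-1}),\,p_r-1]$ and (a) records the jump produced by the new part $p_r$; the case $\sigma=+1$ is dual, via prefix sums of $\prod_{i\le r}(1-x^{p_i})$. A short check shows that in the generic case the bound is positive only for $r\le 3$, so at most two parts are involved; there $\varphi(p_1\cdots p_{r-1})$ is exactly one more than the Frobenius number $p_1p_2-p_1-p_2$, and $p_1p_2>p_r$ keeps $R$ on its value-$1$ plateau across the whole window, which yields (b), while the sign bookkeeping shows the coefficient at $x^{p_r}$ survives precisely when $\sigma=(-1)^r$. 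Translating, $(-1)^{k-1}=(-1)^r$ for $\Phi_n$ and $(-1)^{k}=(-1)^r$ for $\Psi_n$ are exactly $\rho(k-r)=-1$ and $\rho(k-r)=+1$, the parity conditions defining $\alpha^+$ and $\alpha^-$.

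The hard part is the non-generic regime $p_1p_2\le p_r$, which is exactly where $r$ can exceed $3$ and where composite divisors $d$ with $p_1p_2\le d\le p_r$ inject extra factors $(1-x^d)^{\mu(n/d)}$ that spoil the clean representation count. Here I expect the real effort: one must show these factors neither break the vanishing in (b) nor cancel the surviving coefficient in (a). The plan is to evaluate the full signed coefficient $[x^m]P$ over all divisors $\le p_r$ by an inclusion--exclusion that groups each composite contribution with its complementary divisors, proving constancy on the window $(\varphi(p_1\cdots p_{r-1}),\,p_r)$ and nonvanishing at $p_r$. Establishing that $\varphi(p_1\cdots p_{r-1})$ remains the correct left barrier once three or more parts are present, where no closed-form Frobenius number is available, is the step I anticipate to be the most delicate and the crux of the proof.
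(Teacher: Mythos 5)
Your overall reduction is the right one: fix an admissible $r$, work modulo $x^{p_r+1}$ using $\Phi_n=\prod_{d\mid n}(1-x^d)^{\mu(n/d)}$ and $\Psi_n\equiv-\prod_{d\mid n}(1-x^d)^{-\mu(n/d)}\pmod{x^n}$, and show the coefficients vanish on the window $(\varphi(p_1\cdots p_{r-1}),\,p_r)$ while the coefficient of $x^{p_r}$ survives. Your generic-case analysis ($p_1p_2>p_r$, so the only divisors of $n$ not exceeding $p_r$ are $1,p_1,\dots,p_r$) is essentially correct, including the observation that the parity conditions $\rho(k-r)=\mp1$ are exactly $\sigma=(-1)^r$. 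But the proposal has a genuine gap, which you flag yourself: in the regime $p_1p_2\le p_r$ you give only a plan (``an inclusion--exclusion that groups each composite contribution with its complementary divisors''), not an argument. That regime is not a corner case; it is where the theorem has real content for $r\ge 3$. For example, with $n=3\cdot5\cdot17\cdot q$ ($q>17$ prime), $r=3$ is admissible for $\alpha^{+}$ and the claimed bound is $17-\varphi(15)=9$, but the composite divisor $15\le p_3$ injects a factor $(1-x^{15})^{\mu(n/15)}$ that your representation count $R$ does not see, and the Frobenius-plateau argument (special to the two parts $p_1,p_2$) says nothing about it. As written, your proof establishes the theorem only under the additional hypothesis $p_1p_2>p_r$.

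The missing idea --- and the way the paper closes exactly this gap --- is to group the factors differently: not ``divisors $\le p_r$'' but ``all divisors of $m:=p_1\cdots p_{r-1}$''. Every divisor of $n$ smaller than $p_r$ divides $m$ (a divisor with a prime factor $\ge p_r$ is itself $\ge p_r$), and divisors of $m$ larger than $p_r$ are invisible modulo $x^{p_r+1}$, so nothing is lost by including them. Under the parity hypothesis one has $\pm\mu(n/d)=\mu(m/d)$ for all $d\mid m$ (upper sign for $\Phi_n$, lower for $\Psi_n$), so the grouped product is $\prod_{d\mid m}(1-x^d)^{\mu(m/d)}=\Phi_m(x)$: a \emph{polynomial} of degree exactly $\varphi(m)$ --- your left barrier --- with nonvanishing constant and leading coefficients. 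Hence $\Phi_n\equiv\Phi_m\pm x^{p_r}$ and $\Psi_n\equiv-\left(\Phi_m\pm x^{p_r}\right)$ modulo $x^{p_r+1}$ (signs depending on parities), which yields the gap in one stroke, uniformly in whether $p_1p_2>p_r$ or not, for every $r$, and with no Frobenius analysis at all. This is precisely what the paper's machinery does in greater generality: $B=\underline{m}$ plays the role of the grouped divisors, Lemma~\ref{lem:condition_polynomial} together with Lemma~\ref{lem:C_true} shows that $F_{B^{\pm}}/F_{B^{\mp}}$ is a polynomial (for $\alpha^{\pm}$ it equals $\Phi_m$ up to sign), and Lemma~\ref{thm:codulo} converts this into the congruence above; the wrong parities are excluded automatically, since for them the grouped product would be $1/\Phi_m$, which is not a polynomial. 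Replacing your non-generic ``plan'' with this regrouping makes the generic-case computation unnecessary rather than incomplete.
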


\begin{theorem}
[Special bound $\beta^{\pm}$]\label{thm:beta} We have $g(\Phi_{n})\geq
\beta^{+}(n)$ and $g(\Psi_{n})\geq\beta^{-}(n)$ where%
\[
\beta^{\pm}(n)=\max_{\substack{1\leq r<k\\\rho\left(  k-r\right)  =\mp
1}}\left(  \min\left\{  p_{r+1},p_{1}\cdots p_{r}\right\}  -\psi(p_{1}\cdots
p_{r})\right)
\]

\end{theorem}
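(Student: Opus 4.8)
The plan is to produce, for each admissible index $r$ (i.e.\ $1\le r<k$ with the prescribed parity $\rho(k-r)=\mp1$), an explicit gap of length $\min\{p_{r+1},p_{1}\cdots p_{r}\}-\psi(p_{1}\cdots p_{r})$ in the relevant polynomial, and then to take the maximum over $r$. Throughout write $m_{r}:=p_{1}\cdots p_{r}$. The whole argument reduces to understanding the \emph{low-degree} part of $\Phi_{n}$ and $\Psi_{n}$, and the reason $\psi(m_{r})$ enters is that this low part turns out to be $\pm\Psi_{m_{r}}$, whose leading exponent is exactly $\psi(m_{r})$.

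First I would record two one-step reduction congruences. Starting from the standard identities $\Phi_{mp}(x)=\Phi_{m}(x^{p})/\Phi_{m}(x)$ and $\Psi_{mp}(x)=\Psi_{m}(x^{p})\,\Phi_{m}(x)$ for a prime $p\nmid m$, and combining $1/\Phi_{m}(x)\equiv-\Psi_{m}(x)\pmod{x^{m}}$ (immediate from $\Phi_{m}\Psi_{m}=x^{m}-1$ and $\deg\Psi_{m}=\psi(m)<m$) with $\Phi_{m}(x^{p})\equiv1\pmod{x^{p}}$ and $\Psi_{m}(x^{p})\equiv-1\pmod{x^{p}}$, I obtain
\[
\Phi_{mp}(x)\equiv-\Psi_{m}(x)\pmod{x^{\min\{p,m\}}},\qquad
\Psi_{mp}(x)\equiv-\Phi_{m}(x)\pmod{x^{p}}.
\]
Each reduction strips one prime, introduces a sign $-1$, and toggles $\Phi\leftrightarrow\Psi$. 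Composing them along the chain $m_{k}\to m_{k-1}\to\cdots\to m_{r}$ and tracking the $k-r$ sign flips and toggles, I would show that in the low-degree range
\[
\Phi_{n}(x)\equiv-\Psi_{m_{r}}(x)\pmod{x^{\min\{p_{r+1},m_{r}\}}}\quad(k-r\text{ odd}),
\]
\[
\Psi_{n}(x)\equiv\;\;\Psi_{m_{r}}(x)\pmod{x^{\min\{p_{r+1},m_{r}\}}}\quad(k-r\text{ even}).
\]
These are precisely the two parity cases $\rho(k-r)=\mp1$ of $\beta^{\pm}$, and in both the low part equals $\pm\Psi_{m_{r}}$.

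Finally I would read off the gap. Since $\Psi_{m_{r}}$ is monic of degree $\psi(m_{r})$, the target polynomial has a nonzero coefficient $\pm1$ at the exponent $\psi(m_{r})$, while all of its coefficients at the exponents $\psi(m_{r})+1,\ldots,\min\{p_{r+1},m_{r}\}-1$ vanish, because $\Psi_{m_{r}}$ has no term above its leading term and the congruence is valid throughout this range. As the degree of the target polynomial exceeds $\min\{p_{r+1},m_{r}\}$, some nonzero term sits at an exponent $\ge\min\{p_{r+1},m_{r}\}$, so the exponent immediately following $\psi(m_{r})$ is at least $\min\{p_{r+1},m_{r}\}$. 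This yields a gap of size at least $\min\{p_{r+1},m_{r}\}-\psi(m_{r})$, and maximizing over admissible $r$ gives the theorem. When this quantity is $\le0$ the bound is trivial, so only the case $\psi(m_{r})<\min\{p_{r+1},m_{r}\}$ — which is exactly when the leading exponent of $\Psi_{m_{r}}$ is visible in the congruence — needs the argument above.

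The hard part will be the bookkeeping in the composition step: I must check that every intermediate modulus stays $\ge\min\{p_{r+1},m_{r}\}$ (which holds because $p_{j}\ge p_{r+1}$ and $m_{j-1}\ge m_{r}$ for $j\ge r+1$, so the binding modulus is produced by the final $\Phi$-reduction at level $r+1$), and that the accumulated sign together with the $\Phi/\Psi$ toggle matches the prescribed parity $\rho(k-r)=\mp1$ and leaves the leading exponent $\psi(m_{r})$ genuinely surviving into $\Phi_{n}$ (resp.\ $\Psi_{n}$). The remaining points — existence of a higher-degree term to close off the gap, and disposal of the trivial $\le0$ cases — are routine.
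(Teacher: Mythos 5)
Your proposal is correct, but it reaches the bound by a genuinely different route than the paper. The paper obtains $\beta^{\pm}$ as a specialization of the general bound $\varepsilon^{\pm}$ (Theorem~\ref{thm:epsilon}): it proves the congruence of Lemma~\ref{thm:codulo} for $P^{\pm}$ under the combinatorial condition $\mathcal{C}^{\pm}(B)$, then restricts to $B=\Omega_{rr}$, which costs the binomial-counting and telescoping Lemmas~\ref{lem:num_sum_binom}, \ref{thm:telescope}, \ref{lem:C_true} to verify $\mathcal{C}^{\pm}(\Omega_{rr})$, plus Lemmas~\ref{lem:ul} and~\ref{lem:phi} to evaluate $u=\min\{p_{r+1},n_r\}$ and $l^{\pm}(\Omega_{rr})=\psi(n_r)$. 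You instead compose the prime-stripping identities $\Phi_{mp}(x)=\Phi_m(x^p)/\Phi_m(x)$ and $\Psi_{mp}(x)=\Psi_m(x^p)\,\Phi_m(x)$ with the inversion $1/\Phi_m\equiv-\Psi_m\pmod{x^m}$, and your bookkeeping checks out: every stripped prime $p_{j+1}$ with $j\geq r$ satisfies $p_{j+1}\geq p_{r+1}$ and $m_j\geq m_r$; in both parity cases $\rho(k-r)=\mp1$ the final step is indeed a $\Phi$-reduction, so the binding modulus is exactly $\min\{p_{r+1},m_r\}$; the sign/toggle count matches the theorem's parity condition; and the degenerate case $\psi(m_r)\geq\min\{p_{r+1},m_r\}$ is rightly dismissed as trivial. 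The one assertion you leave unproved — that $\deg\Phi_n$, resp.\ $\deg\Psi_n$, reaches at least $\min\{p_{r+1},m_r\}$ so that some term closes off the gap — is genuinely routine (e.g.\ $\varphi(n)\geq(p_1-1)(p_k-1)>p_k\geq p_{r+1}$ and $\psi(n)\geq n/p_1\geq p_k$). Comparing the two: your argument is shorter, self-contained, and yields a sharper structural fact, namely that the low-degree part of $\Phi_n$ (resp.\ $\Psi_n$) is exactly $-\Psi_{m_r}$ (resp.\ $\Psi_{m_r}$); in the paper's notation this is the hidden identity $F_{\Omega_{rr}^{\pm}}/F_{\Omega_{rr}^{\mp}}=\Psi_{n_r}$, and your method is closest in spirit to the paper's own proof of $\delta^-$ via Lemma~\ref{lem:middle_structure}. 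What the paper's heavier machinery buys is uniformity: the same framework delivers $\alpha^{\pm}$, $\gamma^{\pm}$ and above all the much stronger bound $\varepsilon^{\pm}$ for arbitrary admissible $B$, which your recursion, tied to the nested choice $B=\Omega_{rr}$, cannot produce.
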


\begin{theorem}
[Special bound $\gamma^{\pm}$]\label{thm:gamma} We have $g(\Phi_{n})\geq
\gamma^{+}(n)$ and $g(\Psi_{n})\geq\gamma^{-}(n)$ where%
\[
\gamma^{\pm}(n)=\max_{\substack{1\leq r<k\\\rho\left(  k-r\right)  =\mp
1}}\left(  p_{1}\cdots p_{r}-\sum_{\substack{d|n\\\omega\left(  d\right)
<r}}\pm\mu\left(  n/d\right)  \ d\right)
\]

\end{theorem}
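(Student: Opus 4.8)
The plan is to exhibit, for each admissible $r$ (that is, $1\le r<k$ with $\rho(k-r)=\mp1$), two consecutive exponents of $\Phi_{n}$ (in the $+$ case) or $\Psi_{n}$ (in the $-$ case) whose difference equals $p_{1}\cdots p_{r}-\sum_{d\mid n,\,\omega(d)<r}\pm\mu(n/d)\,d$; maximizing over $r$ then gives the theorem. Write $u=p_{1}\cdots p_{r}$ and $l=\sum_{d\mid n,\,\omega(d)<r}\pm\mu(n/d)\,d$. It suffices to prove that the coefficient of $x^{u}$ is nonzero, the coefficient of $x^{l}$ is nonzero, and every coefficient of $x^{j}$ with $l<j<u$ vanishes, for then $l$ and $u$ are consecutive exponents and $g\ge u-l$. (If $u-l\le0$ the bound is vacuous, so assume $l<u$.)

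The engine is the classical product formula $\Phi_{n}=\prod_{d\mid n}(1-x^{d})^{\mu(n/d)}$ together with its inverse-cyclotomic analogue $\Psi_{n}=-\prod_{d\mid n,\,d<n}(1-x^{d})^{-\mu(n/d)}$, both read modulo $x^{u+1}$. Reduction modulo $x^{u+1}$ kills every factor with $d>u$, since $(1-x^{d})^{\pm1}\equiv1$; and among the divisors $d$ with $\omega(d)\ge r$ the only one not exceeding $u$ is $d=u$ itself, because the smallest product of $r$ of the primes is $p_{1}\cdots p_{r}$. Hence modulo $x^{u+1}$ each polynomial factors as $(1-x^{u})^{s\mu(n/u)}$ times $R(x):=\prod_{d\mid n,\,\omega(d)<r}(1-x^{d})^{s\mu(n/d)}$, where $s=+1$ in the cyclotomic case and $s=-1$ in the inverse case (the extra factors with $\omega(d)<r$ but $d>u$ are $\equiv1$). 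The parity hypothesis $\rho(k-r)=\mp1$ is precisely what forces $s\mu(n/u)=-1$, so the separated factor is $(1-x^{u})^{-1}\equiv1+x^{u}$ and contributes a clean $+x^{u}$.

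The crux is the claim that $R$ is a genuine polynomial of degree $l$. To see this I would substitute $1-x^{d}=-\prod_{c\mid d}\Phi_{c}$ and regroup, obtaining $R=\pm\prod_{c\mid n}\Phi_{c}^{\,E_{c}}$ with $E_{c}=s\sum_{c\mid d\mid n,\,\omega(d)<r}\mu(n/d)$. Putting $M=n/c$ and reindexing the inner sum by $e=M/t$ turns it into a tail $\sum_{j>\omega(M)-(r-\omega(c))}(-1)^{j}\binom{\omega(M)}{j}$ of an alternating binomial sum; the partial-sum identity $\sum_{j\le J}(-1)^{j}\binom{m}{j}=(-1)^{J}\binom{m-1}{J}$ collapses this to $E_{c}=s\,(-1)^{k-r+1}\binom{k-\omega(c)-1}{k-r}$. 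Here the parity hypothesis enters again: the sign $s\,(-1)^{k-r+1}$ equals $+1$ in both the $+$ and $-$ cases, so $E_{c}=\binom{k-\omega(c)-1}{k-r}\ge0$ (vanishing unless $\omega(c)<r$). Consequently $R$ is, up to sign, a product of cyclotomic polynomials, hence a polynomial; reading off its degree from the product expression as $\sum_{d\mid n,\,\omega(d)<r}s\mu(n/d)\,d$ gives exactly $l$ (which incidentally shows $l\ge0$).

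With this lemma the conclusion is immediate. Since $\deg R=l<u$, reduction modulo $x^{u+1}$ leaves $R$ unchanged and forces the coefficient of $x^{u}$ in $R$ to vanish, so $\Phi_{n}\equiv R+x^{u}$ (respectively $\Psi_{n}\equiv-R-x^{u}$) modulo $x^{u+1}$. Thus $x^{u}$ has coefficient $\pm1\ne0$, the leading term of $R$ makes the coefficient of $x^{l}$ nonzero, and all exponents strictly between $l$ and $u$ are absent, yielding a gap of exactly $u-l$. I expect the main obstacle to be the lemma, and within it the sign bookkeeping in evaluating $E_{c}$: one must verify that the parity condition on $k-r$ and the product sign $s$ cancel so that $E_{c}$ is a nonnegative binomial coefficient rather than a signed quantity. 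Everything surrounding it is a routine truncation argument.
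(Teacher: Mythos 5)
Your proof is correct and is essentially the paper's own argument in specialized, inlined form: the paper proves the general bound $\varepsilon^{\pm}$ (Theorem~\ref{thm:epsilon}) by exactly your device of truncating the $(1-x^{d})^{\pm1}$ product modulo $x^{u+1}$, and then obtains $\gamma^{\pm}$ by choosing $B=\{d\mid n:\omega(d)<r\}$, where the polynomiality of your $R$ (the paper's $G^{\pm}=F_{B^{\pm}}/F_{B^{\mp}}$) is established by the same alternating-binomial telescoping identity (Lemmas~\ref{lem:num_sum_binom}--\ref{lem:C_true}, whose exponent $\binom{k-\omega(c)-1}{r-\omega(c)-1}$ coincides with your $E_{c}=\binom{k-\omega(c)-1}{k-r}$). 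The only difference is organizational: you specialize to this particular $A\uplus B$ decomposition from the outset instead of routing through the abstract condition $\mathcal{C}^{\pm}(B)$ and Lemma~\ref{lem:ul}.
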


\begin{theorem}
[Special bound $\delta^{-}$]\label{thm:delta} We have $g(\Psi_{n})\geq
\delta^{-}(n)$ where%
\[
\delta^{-}(n)=2\frac{n}{p_{1}}-\psi(n)
\]

\end{theorem}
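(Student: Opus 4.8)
The plan is to locate an explicit large gap in $\Psi_n$ around the exponent $u=n/p_1$, which is the largest proper divisor of $n$. Write $m=n/p_1=p_2\cdots p_k$, so that $p_1\nmid m$. The starting point is the factorization identity
\[
\Psi_n(x)=\Phi_m(x)\,\Psi_m\!\left(x^{p_1}\right),
\]
which follows by combining the standard relation $\Phi_n(x)=\Phi_m(x^{p_1})/\Phi_m(x)$ (valid because $p_1\nmid m$) with $x^n-1=(x^{p_1})^m-1=\Phi_m(x^{p_1})\,\Psi_m(x^{p_1})$ inside $\Psi_n=(x^n-1)/\Phi_n$. Writing $\Phi_m(x)=\sum_i a_ix^i$ and $\Psi_m(y)=\sum_j b_jy^j$, this exhibits $\Psi_n$ as a superposition of shifted copies (blocks) of $\Phi_m$: \[\Psi_n(x)=\sum_j b_j\,x^{p_1 j}\,\Phi_m(x),\] the $j$-th block occupying exponents in $[p_1 j,\ p_1 j+\varphi(m)]$. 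I will also use that $\Psi_n$ is anti-palindromic, $x^{\psi(n)}\Psi_n(1/x)=-\Psi_n(x)$ (an immediate consequence of the palindromy of $\Phi_n$), so its exponent set is symmetric about $\psi(n)/2$.

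Next I fix the two endpoints of the claimed gap: $u=m$ and $l=\psi(n)-m$. A short computation using $\psi(n)=\varphi(m)+p_1\psi(m)$ shows $l=(p_1-1)\psi(m)\ge 0$ and $l+u=\psi(n)$, so $l$ and $u$ are symmetric about the centre $\psi(n)/2$ and $u-l=2m-\psi(n)=\delta^-(n)$. Since $\Psi_n(0)=-1$, the exponent $0\le l$ always occurs, so it suffices to prove the two coefficient statements (a) $[x^{m}]\Psi_n\neq 0$, and (b) $[x^{e}]\Psi_n=0$ for every $e$ with $l<e<m$. Granting these, the predecessor of the exponent $u=m$ is at most $l$, giving a gap of at least $u-l=\delta^-(n)$ and proving the theorem. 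By the anti-palindromic symmetry, $[x^e]\Psi_n=0\iff[x^{\psi(n)-e}]\Psi_n=0$, and the band $(l,m)$ is symmetric about $\psi(n)/2$, so for (b) it is enough to treat its lower half.

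For the computation I would pass to coefficients of the rational truncation: since $\deg\Psi_n=\psi(n)<n$ and $\Psi_n\Phi_n\equiv-1\pmod{x^n}$, for $0\le e\le m$ one has $[x^e]\Psi_n=-[x^e]\big(\Phi_m(x)/\Phi_m(x^{p_1})\big)$, and $\Phi_m(x^{p_1})\equiv 1\pmod{x^{p_1}}$ organizes this as the telescoping overlap of the blocks above. In the simplest case $k=2$ this is transparent: $\Psi_{p_1p_2}=(1+x+\cdots+x^{p_2-1})(x^{p_1}-1)$, whose two blocks cancel exactly on the overlap $[p_1,p_2-1]=(l,u)$ while leaving coefficient $+1$ at $x^{p_2}=x^{u}$. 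The main obstacle is to show that this cancellation persists for general $k$, where many blocks with non-constant coefficients $b_j$ overlap the band $(l,m)$; establishing that these contributions sum to $0$ throughout $(l,m)$ (statement (b)) while the total at $x^m$ is nonzero (statement (a)) is the technical heart. I expect to handle it either by induction on $k$ through the identity above, tracking how the central block structure of $\Psi_m$ lifts under $x\mapsto x^{p_1}$, or by a direct evaluation of the truncated M\"obius product $\prod_{d\mid n,\,d<n}(1-x^d)^{-\mu(n/d)}\bmod x^{m+1}$, in which the factor $(1-x^m)$ (coming from the divisor $d=m$) supplies the surviving $\pm x^m$ term and the remaining factors must be shown to contribute nothing in the open band.
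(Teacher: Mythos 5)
Your setup is sound --- the endpoints $l=\psi(n)-n/p_1$ and $u=n/p_1$ are exactly the right ones, and the reduction of the theorem to the two coefficient statements (a) $[x^{n/p_1}]\Psi_n\neq 0$ and (b) $[x^e]\Psi_n=0$ on the open band --- but the proposal stops precisely where the work lies: statements (a) and (b) \emph{are} the theorem, and you leave them unproven, offering two candidate strategies ("induction on $k$" or "truncated M\"obius product") without carrying out either. Moreover, the decomposition you chose makes them genuinely awkward. Writing $\Psi_n(x)=\Phi_m(x)\,\Psi_m(x^{p_1})$ with $m=n/p_1$ superposes $\psi(m)+1$ shifted copies of $\Phi_m$, each of width $\varphi(m)$, at spacing $p_1$; since $\varphi(m)$ is typically far larger than $p_1$, many blocks with coefficients $b_j$ of both signs overlap every exponent in the band $(l,u)$, and there is no visible local mechanism forcing their sum to vanish there. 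So as written this is a plan with a gap, not a proof. (Your second suggested route, analyzing $\prod_{d\mid n,\,d<n}(x^d-1)^{-\mu(n/d)}$ truncated at $x^{m+1}$, can be made to work, but it amounts to the paper's general bound $\varepsilon^-$ with $B=\{d : d\mid n,\ \omega(d)<k,\ d\neq p_2\cdots p_k\}$, and requires verifying the sign condition $\mathcal{C}^-(B)$ --- again nontrivial work you have not done.)

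The missing idea is that the identity you already quote, $\Psi_{Np}(x)=\Phi_N(x)\,\Psi_N(x^{p})$, should be iterated in the \emph{opposite} order, peeling off the largest prime at each step so that the surviving inverse-cyclotomic factor is the one attached to $p_1$:
\[
\Psi_n(x)=\Phi_{n_{k-1}}(x)\,\Phi_{n_{k-2}}\left(x^{p_k}\right)\cdots\Phi_{n_1}\left(x^{p_3\cdots p_k}\right)\,\Psi_{p_1}\left(x^{n/p_1}\right)=H(x)\left(x^{n/p_1}-1\right),
\]
where $n_j=p_1\cdots p_j$. Your one-step peel of the \emph{smallest} prime buries $p_1$ inside the substitution $x\mapsto x^{p_1}$, which is why the factor $x^{n/p_1}-1$ never appears for you. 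With the reversed order, $\Psi_n=-H+x^{n/p_1}H$, where $H$ is monic with $H(0)=1$ and $\deg H=\psi(n)-n/p_1$; hence when $\delta^-(n)>0$ the supports of the two summands are disjoint, every exponent of $\Psi_n$ lies in $[0,\psi(n)-n/p_1]\cup[n/p_1,\psi(n)]$, both endpoints occur, and your (a) and (b) follow by degree counting alone, with no cancellation analysis (when $\delta^-(n)\leq 0$ there is nothing to prove). This is exactly the paper's proof (Lemma~\ref{lem:middle_structure}); your anti-palindromy observation and the congruence $\Psi_n\equiv-\Phi_m(x)/\Phi_m(x^{p_1})\pmod{x^n}$ are correct but end up unnecessary.
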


\noindent Now we describe a more general lower bound, which is abstracted from
the above four bounds. For this, we need a few notations.

\begin{notation}
For a positive integer $d$ and a set $B$ of positive integers, let
\[%
\begin{array}
[c]{lllllll}%
\overline{d} & = & \left\{  h\;:\;d\ |\ h\right\}  &  & \underline{B} & = &
\displaystyle\bigcup_{d\in B}\underline{d}\\
\underline{d} & = & \left\{  h\;:\;h\ |\ d\right\}  &  & B^{\pm} & = &
\left\{  d\in B\;:\;\mu\left(  n/d\right)  =\pm1\right\}
\end{array}
\]

\end{notation}

\noindent Now are ready to state the general bound, unifying the four special bounds.

\begin{theorem}
[General bound $\varepsilon^{\pm}$]\label{thm:epsilon} We have $g(\Phi
_{n})\geq\varepsilon^{+}(n)$ and $g(\Psi_{n})\geq\varepsilon^{-}(n)$ where%
\[
\varepsilon^{\pm}(n)=\max_{\substack{A\uplus B=\underline{n}\setminus
\{n\}\\A\neq\emptyset\\\mathcal{C}^{\pm}(B)}}\left(  \min A\ -\ 
\sum_{d\in B}\pm \mu\left(n/d\right)\  d \right)
\]
where
\[
\mathcal{C}^{\pm}(B)\;\;\;\Leftrightarrow\;\;\;\forall d\in\underline{B}%
\;\ \ \;\;\#(B^{\pm}\cap\overline{d})\;\geq\;\#(B^{\mp}\cap\overline{d})
\]

\end{theorem}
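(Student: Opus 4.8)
The plan is to exhibit, for each admissible pair $(A,B)$, a genuine gap in $\Phi_n$ (resp.\ $\Psi_n$) of length $\min A-\sum_{d\in B}\pm\mu(n/d)\,d$, and then take the maximum over all such pairs. Write $u=\min A$ and $l=\sum_{d\in B}\pm\mu(n/d)\,d$; since $g\ge 0$ always holds, I only need to treat pairs with $u>l$. Throughout I will use the factorizations $\Phi_n(x)=\prod_{d\mid n}(1-x^d)^{\mu(n/d)}$ and $\Psi_n(x)=-\prod_{d\mid n,\,d<n}(1-x^d)^{-\mu(n/d)}$, which follow from M\"obius inversion of $x^n-1=\prod_{d\mid n}\Phi_d$ together with $\sum_{e\mid n}\mu(e)=0$ for $n>1$. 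Setting $\sigma_d=\mu(n/d)$ in the $+$ case and $\sigma_d=-\mu(n/d)$ in the $-$ case, both polynomials agree, up to a global sign, with the single object $G(x)=\prod_{d\mid n,\,d\neq n}(1-x^d)^{\sigma_d}$ in all degrees $<n$ (exactly so for $\Psi_n$, and for $\Phi_n$ because $\Phi_n=(1-x^n)\,G$); since $u<n$, this covers the range I need, so it suffices to locate the gap $(l,u)$ inside $G$.

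First I would split $G=G_A\,G_B$ according to $A\uplus B=\underline{n}\setminus\{n\}$, where $G_A=\prod_{d\in A}(1-x^d)^{\sigma_d}$ and $G_B=\prod_{d\in B}(1-x^d)^{\sigma_d}$. Because every $d\in A$ satisfies $d\ge u$, each factor of $G_A$ is trivial up to degree $u$; a short count of which products of terms across the $A$-factors can reach degree exactly $u$ then gives $G_A\equiv 1\pmod{x^u}$ with $[x^u]G_A=-\sigma_u$, the only contribution coming from the unique factor $d=u=\min A$. Consequently $[x^m]G=[x^m]G_B$ for $m<u$, while $[x^u]G=[x^u]G_B-\sigma_u$.

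The heart of the argument, and the step I expect to be the main obstacle, is the following claim about $G_B$: \emph{under $\mathcal{C}^{\pm}(B)$, the series $G_B$ is in fact a polynomial of degree exactly $l$ with leading coefficient $\pm1$.} To prove it I would write $G_B=\prod_{d\in P}(1-x^d)\big/\prod_{d\in N}(1-x^d)$, where $P=\{d\in B:\sigma_d=1\}$ and $N=\{d\in B:\sigma_d=-1\}$, and verify that in both the $+$ and the $-$ case $\mathcal{C}^{\pm}(B)$ unwinds to the single statement $\#(P\cap\overline{d})\ge\#(N\cap\overline{d})$ for all $d\in\underline{B}$. Factoring each $1-x^d$ via $x^d-1=\prod_{e\mid d}\Phi_e$, the multiplicity of $\Phi_e$ in the numerator is $\#(P\cap\overline{e})$ and in the denominator is $\#(N\cap\overline{e})$; since every $e$ occurring in the denominator lies in $\underline{N}\subseteq\underline{B}$, the condition forces denominator-multiplicity $\le$ numerator-multiplicity for every $\Phi_e$, so the denominator divides the numerator and $G_B\in\mathbb{Z}[x]$. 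Its degree is then $\sum_{d\in P}d-\sum_{d\in N}d=l$ and its leading coefficient is $(-1)^{|P|-|N|}=\pm1$. The delicate points are checking that $\mathcal{C}^{\pm}$ really collapses to this one inequality in each sign case and that it is needed only on $\underline{N}$, which $\underline{B}$ comfortably covers.

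Finally I would assemble the gap. By the claim, $[x^l]G_B$ is the nonzero leading coefficient while $[x^m]G_B=0$ for $m>l$; combined with the $G_A$ computation this yields $[x^l]G=\pm1\neq0$, then $[x^m]G=0$ for $l<m<u$, and $[x^u]G=0-\sigma_u\neq0$. Translating back through $\Phi_n=(1-x^n)G$ (resp.\ $\Psi_n=-G$) and using $u<n$, the exponents $l$ and $u$ are consecutive exponents actually occurring in $\Phi_n$ (resp.\ $\Psi_n$), so $g\ge u-l$. Taking the maximum over all admissible $(A,B)$ gives $g(\Phi_n)\ge\varepsilon^{+}(n)$ and $g(\Psi_n)\ge\varepsilon^{-}(n)$. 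A minor point to dispatch along the way is that this very gap forces $x^u$ to lie within the polynomial, so no separate bound on $u$ is needed.
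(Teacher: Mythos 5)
Your proposal is correct and follows essentially the same route as the paper's proof: your central claim --- that $\mathcal{C}^{\pm}(B)$ forces $G_B$ to be a polynomial of degree exactly $l$ with leading coefficient $\pm1$ --- is precisely Lemma~\ref{lem:condition_polynomial}, proved by the identical count of multiplicities of each $\Phi_e$ in numerator and denominator, and your splitting $G=G_A G_B$ with the computation $G_A\equiv 1-\sigma_u x^u \pmod{x^{u+1}}$ plays exactly the role of Lemma~\ref{thm:codulo}. The only genuine difference is bookkeeping: by normalizing every factor to $(1-x^d)^{\pm 1}$ and working in the power-series ring, you obtain the contribution of $A$ uniformly in one stroke, whereas the paper's congruence argument with $F_C=\prod_{c\in C}(x^c-1)$ requires a two-case analysis ($u\in A^{\pm}$ versus $u\in A^{\mp}$) and considerably more sign-tracking.
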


\begin{remark}
The above four special bounds $\alpha^{\pm},\ \beta^{\pm}$, $\gamma^{\pm}$ and
$\delta^{-}$ can be obtained from the general bound $\varepsilon^{\pm}$ by
considering only certain $B$'s:

\begin{enumerate}
\item[$\alpha^{\pm}$:] $B=\left\{  d:d|p_{1}\cdots p_{r-1}\ \text{and }%
\omega\left(  d\right)  <r\right\}  $\;\;\;\;\;\; for $1\leq r<k$ and $\rho\left(
k-r\right)  =\mp1$

\item[$\beta^{\pm}$:] $B=\left\{  d:d|p_{1}\cdots p_{r}\;\;\;\;\; \text{and }%
\omega\left(  d\right)  <r\right\}  $\;\;\;\;\;\; for $1\leq r<k$ and $\rho\left(
k-r\right)  =\mp1$

\item[$\gamma^{\pm}$:] $B=\left\{  d:d|p_{1}\cdots p_{k}\;\;\;\;\; \text{and }%
\omega\left(  d\right)  <r\right\}  $\;\;\;\;\;\; for $1\leq r<k$ and $\rho\left(
k-r\right)  =\mp1$

\item[$\delta^{-}$:] $B=\left\{  d:d|p_{1}\cdots p_{k}\;\;\;\;\; \text{and }%
\omega\left(  d\right)  <k\;\;\; \text{and }d\neq p_{2}\cdots p_{k}\right\}  $
\end{enumerate}

\noindent It turns out that these $B$'s satisfy $\mathcal{C}^{\pm}(B)$.
\end{remark}

\begin{theorem}
[Sufficient condition on $g(\Psi_{n})$]\label{thm:condition_delta} We have

\begin{enumerate}
\item $g(\Psi_{n})=\delta^{-}(n)$ if $\delta^{-}(n)\geq\frac{1}{2}\frac
{n}{p_{1}}$.

\item For every $k\geq2$ and every odd prime $p$, we have
\[
\lim_{b\rightarrow\infty}\frac{\#\left\{  n\;:\;p_{k}\leq b,\,p_{1}%
=p,\,\delta^{-}(n)\geq\frac{1}{2}\frac{n}{p_{1}}\right\}  }{\#\left\{
\,n\;:\;p_{k}\leq b,\,p_{1}=p\,\right\}  \hfill}=1
\]

\end{enumerate}
\end{theorem}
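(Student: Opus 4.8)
The two parts are essentially independent. Throughout write $m=n/p_{1}=p_{2}\cdots p_{k}$, $\psi=\psi(n)$, and recall $\delta^{-}(n)=2m-\psi$.

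\textbf{Part 1.} Since Theorem~\ref{thm:delta} already gives $g(\Psi_{n})\geq\delta^{-}(n)$, it suffices to prove the reverse inequality under the hypothesis. The plan rests on two structural facts about the support $S=\{\,j:[x^{j}]\Psi_{n}\neq0\,\}$. First, $\Psi_{n}$ is anti-palindromic: its roots are the non-primitive $n$-th roots of unity, a set closed under $z\mapsto z^{-1}$, so (being monic with $\Psi_{n}(0)\neq0$) one gets $x^{\psi}\Psi_{n}(1/x)=-\Psi_{n}(x)$, whence $[x^{j}]\Psi_{n}=-[x^{\psi-j}]\Psi_{n}$ and $S$ is symmetric about $\psi/2$; in particular $0,\psi\in S$. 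Second, I read off the \emph{location} of the gap produced in the proof of Theorem~\ref{thm:delta}: it is realized between the exponents $l=\psi-m$ and $u=m$, the upper endpoint being $u=\min A=n/p_{1}$ in the $\varepsilon^{-}$ description. Thus $l,u\in S$, $(l,u)\cap S=\emptyset$, and $u-l=\delta^{-}(n)$. If the proof of Theorem~\ref{thm:delta} is not phrased in this located form, I would re-derive it directly from the congruence $-\Psi_{n}\equiv\prod_{d\mid n}(1-x^{d})^{-\mu(n/d)}\pmod{x^{\psi+1}}$, showing $[x^{m}]\Psi_{n}\neq0$ and $[x^{j}]\Psi_{n}=0$ for $l<j<m$. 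This coefficient computation is the one genuinely technical ingredient, and I expect it to be the main obstacle.

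Granting these facts, the conclusion is immediate and is exactly where the hypothesis enters. The condition $\delta^{-}(n)\geq\tfrac12 m$ is equivalent to $u-l\geq l$, i.e.\ $l\leq u/2$. Since $S$ is symmetric and $(l,u)$ is free of support, $S\subseteq[0,l]\cup[u,\psi]$ with $0,l\in S$ and $u,\psi\in S$. Any gap internal to the lower block lies inside $[0,l]$, hence has size at most $l$; by the symmetry of $S$ the same holds for the upper block $[u,\psi]$, whose width is $\psi-u=l$. The only remaining gap is $(l,u)$, of size $\delta^{-}(n)$. Therefore $g(\Psi_{n})=\max(\le l,\ \delta^{-}(n),\ \le l)=\delta^{-}(n)$, the last equality using $\delta^{-}(n)\geq l$, which is the hypothesis.

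\textbf{Part 2.} First rewrite the condition analytically. Dividing $\psi\leq\tfrac32 m$ by $n$ gives $\prod_{i=1}^{k}(1-1/p_{i})\geq 1-\tfrac{3}{2p}$ with $p=p_{1}$, and isolating the factor $1-1/p$ turns this into
\[
\prod_{i=2}^{k}\Bigl(1-\tfrac1{p_{i}}\Bigr)\ \geq\ 1-\tfrac{1}{2(p-1)}.
\]
By the elementary inequality $\prod(1-a_{i})\geq 1-\sum a_{i}$, this holds whenever $\sum_{i=2}^{k}1/p_{i}\leq c_{p}$, where $c_{p}=\tfrac{1}{2(p-1)}$ depends only on $p$. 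Hence the set of $n$ \emph{violating} the condition is contained in $\{\,n:\sum_{i=2}^{k}1/p_{i}>c_{p}\,\}$.

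Now I count. With $p$ and $k\geq2$ fixed, the $n$ with $p_{1}=p$ and $p_{k}\leq b$ correspond to the $(k-1)$-subsets $\{p_{2}<\cdots<p_{k}\}$ of $P_{b}:=\{\text{primes in }(p,b]\}$, numbering $\binom{N}{k-1}$ with $N=|P_{b}|=\pi(b)-\pi(p)$. If $\sum_{i\geq2}1/p_{i}>c_{p}$ then $(k-1)/p_{2}>c_{p}$, so the smallest chosen prime satisfies $p_{2}<M:=(k-1)/c_{p}$, a bound independent of $b$. There are at most $\pi(M)$ choices for such a $p_{2}$ and at most $\binom{N}{k-2}$ choices for the remaining primes, so the number of violating $n$ is $O_{p,k}(N^{k-2})$. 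Its ratio to the total $\binom{N}{k-1}=\Theta(N^{k-1})$ is $O(1/N)\to0$ as $b\to\infty$ (since $N\to\infty$), which is precisely the stated limit. This analysis is elementary; its only delicate point is the passage from the product condition to the sum condition, handled with room to spare by $\prod(1-a_{i})\geq1-\sum a_{i}$.
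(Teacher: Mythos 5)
Your proof is correct, and it departs from the paper's at the level of the key lemmas in both parts, so a comparison is worthwhile. For Part 1, the paper argues directly from Lemma~\ref{lem:middle_structure}: writing $\Psi_{n}=-H(x)+H(x)\,x^{n/p_{1}}$, the support of $\Psi_{n}$ is two shifted copies of the support of $H$, so every gap other than the middle one is a gap of $H$ and hence at most $\deg H=\psi(n)-\frac{n}{p_{1}}$; the condition $\delta^{-}(n)\geq\deg H$ is then rewritten as $\delta^{-}(n)\geq\frac{1}{2}\frac{n}{p_{1}}$. You instead keep only the located middle gap, with endpoints $l=\psi(n)-\frac{n}{p_{1}}$ and $u=\frac{n}{p_{1}}$ (which the paper's proof of Theorem~\ref{thm:delta} does furnish, so your hedge about having to re-derive it is unnecessary), and control the two outer blocks by the anti-palindromic symmetry $x^{\psi(n)}\Psi_{n}(1/x)=-\Psi_{n}(x)$. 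Both routes are sound: the paper gets the block bound for free from the two-copies-of-$H$ structure, while your argument uses less of that structure — only the symmetry of the root set — and would apply verbatim to any self-reciprocal polynomial with a located central gap. For Part 2 the skeleton is the same (reduce the condition to a constraint on $p_{2}$ alone, then count), but the ingredients differ: the paper's Lemma~\ref{lem:suff2_maxg_smidg} obtains the threshold $p_{2}>(k-1)(2p_{1}-3)$ by a telescoping product estimate, and then computes both cardinalities exactly as binomial coefficients, $\binom{w-y+1}{k-1}/\binom{w-v}{k-1}\rightarrow1$; you derive the slightly weaker but equally serviceable threshold $p_{2}\geq 2(k-1)(p-1)$ from the elementary inequality $\prod(1-a_{i})\geq 1-\sum a_{i}$ applied to the correctly isolated condition $\prod_{i\geq2}\left(1-\frac{1}{p_{i}}\right)\geq 1-\frac{1}{2(p-1)}$, and then bound the violators by $O_{p,k}(N^{k-2})$ against a total of $\Theta(N^{k-1})$. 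Your derivation of the sufficient condition is shorter and avoids the telescoping trick; the paper's exact binomial count avoids asymptotic bookkeeping. Either way the density statement follows, and all the individual steps you give (the sign of the reciprocal symmetry, the equivalence $\delta^{-}(n)\geq\frac{1}{2}\frac{n}{p_{1}}\Leftrightarrow\delta^{-}(n)\geq l$, the algebraic isolation of the factor $1-\frac{1}{p}$, and the $p_{2}<2(k-1)(p-1)$ bound on violators) check out.
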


\begin{conjecture}
[Equivalent condition on $g(\Phi_{n})$]\label{conj:condition_phi} We have
\[
g(\Phi_{n})=\varphi(p_{1}\cdots p_{k-1})\ \ \text{if and only if \ }%
p_{k}>p_{1}\cdots p_{k-1}%
\]

\end{conjecture}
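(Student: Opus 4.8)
The plan is to work with the standard identity $\Phi_{mp}(x)=\Phi_m(x^{p})/\Phi_m(x)$ (valid since $p\nmid m$), where throughout I write $m=p_1\cdots p_{k-1}$ and $p=p_k$, and to convert the qualitative hypothesis ``$p>m$'' into a precise description of the support of $\Phi_{mp}$. Set $P(x)=\prod_{d\mid m,\,d<m}\Phi_d(x)=(x^{m}-1)/\Phi_m(x)$, a polynomial of degree $q:=\psi(m)=m-\varphi(m)$, and $Q(x)=\Phi_m(x^{p})P(x)$, with coefficients $d_e=[Q]_e=\sum_{j=0}^{\varphi(m)}a_j\,[P]_{e-pj}$, writing $a_j=[\Phi_m]_j$. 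Multiplying $\Phi_{mp}=\Phi_m(x^{p})/\Phi_m(x)$ by $x^{m}-1=\Phi_m(x)P(x)$ gives $\Phi_{mp}(x)\,(x^{m}-1)=Q(x)$, i.e.\ the recursion $c_e=c_{e-m}-d_e$ for the coefficients $c_e=[\Phi_{mp}]_e$. Since $c_e=0$ for $e<0$, this unrolls to the prefix-sum formula
\[
c_e=-\sum_{s\ge 0} d_{e-sm}.
\]
Thus every gap of $\Phi_{mp}$ is governed by how the shifted copies of $P$ sitting at the positions $pj$ combine along each residue class modulo $m$.

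For the ``if'' direction I would first record the lower bound $g(\Phi_n)\ge\varphi(m)$, which is already contained in Theorem~\ref{thm:beta}: taking $r=k-1$ (legitimate since $\rho(k-r)=\rho(1)=-1$) gives $\beta^{+}(n)\ge\min\{p,m\}-\psi(m)=m-\psi(m)=\varphi(m)$, precisely because $p>m$. The real content is the matching upper bound $g(\Phi_n)\le\varphi(m)$. Here I would exploit that $p>m>q$: the window $[e-q,e]$ has length $q<p$, so for each $e$ it contains at most one multiple $pj$, and therefore $d$ is a \emph{disjoint} union of shifted copies $a_j\,P(x)\,x^{pj}$ over $j$ with $a_j\ne0$, each occupying the interval $[pj,\,pj+q]$. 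It then remains to show that the prefix sums $c_e=-\sum_s d_{e-sm}$ never vanish on a run of more than $\varphi(m)$ consecutive exponents. I would analyze this residue class by residue class modulo $m$, using the anti-palindromic symmetry $[P]_i=-[P]_{q-i}$ of $P$ (a product of the palindromic $\Phi_d$, $d\ge2$, with the single anti-palindromic factor $\Phi_1=x-1$) together with the self-reciprocity $c_e=c_{\varphi(mp)-e}$ of $\Phi_{mp}$, to reduce to controlling zero-runs in the lower half $0\le e\le \varphi(mp)/2$.

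For the ``only if'' direction I would prove the contrapositive: if $p<m$ then $g(\Phi_n)>\varphi(m)$. Now $p<m$ may force $p\le q$, so the copies of $P$ in $d$ overlap and the clean ``one copy per exponent'' picture breaks down. Moreover the bottom block of $\Phi_{mp}$ (where $c_e=[1/\Phi_m]_e$ for $0\le e\le p-1$) reproduces only a truncated single period of $1/\Phi_m$ and does not by itself force a gap larger than $\varphi(m)$; the witnessing gap must therefore be located in the interior. The plan is to use the prefix-sum formula to pinpoint an exponent $e^{\star}$ at which a block of at least $\varphi(m)+1$ consecutive $c_e$ vanish through cancellation of overlapping $P$-copies, and to exhibit nonzero coefficients bracketing it.

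The step I expect to be the main obstacle is the upper bound in the ``if'' direction --- proving that no zero-run of $c_e=-\sum_s d_{e-sm}$ exceeds $\varphi(m)$ --- together with the dual construction of an over-long zero-run for the ``only if'' direction. Both hinge on the same difficulty: understanding the cancellations among the shifted copies of $P$ after summation along residue classes modulo $m$, which is exactly the combinatorial blow-up that has so far blocked a closed form for $g(\Phi_n)$. A promising way to tame it is to invoke Theorem~\ref{thm:modulo}, by which $g(\Phi_{mp})$ depends only on $m$ and $\mathrm{rem}(p,m)$; combined with the prefix-sum description this suggests reducing the statement, for each fixed $m$, to finitely many residues and then arguing uniformly in $m$ by induction on $k$, peeling off the largest prime $p_k$.
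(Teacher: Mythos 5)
You have not proved this statement, and neither does the paper: it is stated there as a \emph{conjecture}, known only for $k=2$ (from \cite{Hong2012}) and explicitly open for $k\ge 3$. Your setup is sound --- the identity $\Phi_{mp}(x)\,(x^{m}-1)=\Phi_m(x^{p})\,P(x)$ with $P=\Psi_m$, the prefix-sum formula $c_e=-\sum_{s\ge 0}d_{e-sm}$, the disjointness of the shifted copies of $P$ when $p>m$ (since $\deg P=\psi(m)<m<p$), the anti-palindromy of $P$, and the lower bound $g(\Phi_n)\ge\varphi(m)$ for $p>m$ via Theorem~\ref{thm:beta} with $r=k-1$ are all correct. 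But the two claims that \emph{are} the conjecture --- (i) for $p>m$ no zero-run of the $c_e$ exceeds $\varphi(m)$, and (ii) for $p<m$ some zero-run strictly exceeds $\varphi(m)$, witnessed by bracketing nonzero coefficients --- are exactly the steps you defer as ``the main obstacle.'' No mechanism is offered for either: there is no control of the cancellations among overlapping copies of $P$ along residue classes modulo $m$, no construction of the exponent $e^{\star}$, and no inductive step actually formulated. The gap is not a detail; it is the entire content of the problem, which is why it remains a conjecture.

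Your closing suggestion --- invoke Theorem~\ref{thm:modulo} to reduce, for each fixed $m$, to finitely many residues $\mathrm{rem}(p,m)$ --- is precisely what the paper does, but it is worth being clear about what that buys. The invariance theorem (proved there via the structural results (C1)--(C3) of \cite{Alkateeb2017} on the blocks $f_{m,p,i,j}$ of $\Phi_{mp}$) converts the quantifier ``for all primes $p$'' into a finite computation for each fixed $m$; this yields Algorithm~\ref{alg:modulo} and the verification in Theorem~\ref{thm:verified} for all odd square-free $m<1000$. It gives no handle whatsoever on the quantifier over $m$: your hope of then ``arguing uniformly in $m$ by induction on $k$'' is not accompanied by any relation between $g(\Phi_{mp})$ and $g(\Phi_{m})$ that could serve as an inductive step, and none is known. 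So, as written, your proposal would at best reproduce the paper's finite verification scheme, not a proof of the conjecture.
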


\section{Examples}

\label{sec:example}

\subsection{Examples for the bound $\alpha^{\pm},\beta^{\pm},\gamma^{\pm
},~\delta^{-}$ and $\varepsilon^{\pm}$ (Theorems~\ref{thm:alpha},
\ref{thm:beta}, \ref{thm:gamma}, \ref{thm:delta} and \ref{thm:epsilon})}

In the following two tables, we give the values of $g(\Phi_{n})$, $g(\Psi_{n})$
 and the lower bounds on several
values of $n$.%
\begin{align*}
&
\begin{array}
[c]{c||c|c|c|c|c}\hline
n & 3\cdot5\cdot11\cdot 13
 & 3\cdot5\cdot7\cdot 71 & 7\cdot 11\cdot 13\cdot 17 & 3\cdot7\cdot11\cdot13 &
3\cdot5\cdot7\cdot11\\\hline\hline
g(  \Phi_{n})  & \mathbf{3} & \mathbf{14} & \mathbf{210} &
\mathbf{17} & \mathbf{10}\\
\alpha^{+}(  n)  & \mathbf{3} & 2 & 6 & 2 & 2\\
\beta^{+}(  n)  & 2 & \mathbf{14} & 6 & 2 & 2\\
\gamma^{+}(  n)  & 2 & 2 & \mathbf{210} & 2 & 2\\
\varepsilon^{+}(  n)  & \mathbf{3} & \mathbf{14} & \mathbf{210} &
\mathbf{17} & 2\\\hline
\end{array}
\\
\\
&
\begin{array}
[c]{c||c|c|c|c|c}\hline
n & 5\cdot7\cdot11\cdot13 & 7\cdot 11\cdot 13\cdot 17
 & 3\cdot5\cdot7\cdot 11 & 
 7\cdot 11\cdot 13\cdot 41 &7\cdot11\cdot13\\\hline\hline
g(  \Psi_{n})  & \mathbf{3} & \mathbf{30} & \mathbf{95}
 & \mathbf{11} & \mathbf{7}\\
\alpha^{-}(  n)  & \mathbf{3} & 5 & 3 & 5 & 6\\
\beta^{-}(  n)  & 0 & -4 & 0 & 4 & 6\\
\gamma^{-}(  n)  & 0 & \mathbf{30} & -10 & 6 & 6\\
\delta^{-}(  n)  & -123 & -635 & \mathbf{95} & -515 & 5\\
\varepsilon^{-}(  n)  & \mathbf{3} & \mathbf{30} & \mathbf{95} & \mathbf{11} & 6\\\hline
\end{array}
\end{align*}
In the above tables, we marked the exact ones in boldface, that is, the ones
that match $g(  \Phi_{n})$ or $g(  \Psi_{n})$.
For the last column, we chose the smallest $n$ such that
 $g(\Phi_{n})$ and $g(\Psi_{n})$ is not equal to any of the lower bounds.
After checking all the values of $n < 15013$, we have not found any such example
for the cyclotomic case
where $k=3$.

 In the following, we will illustrate how the above bounds
  are computed for some of
the examples.

\begin{example}
[$\alpha^{+}$]Let $n=3\cdot5\cdot11\cdot 13$.
 We will compute $\alpha^{+}(n)$. Let%
\begin{align*}
u &  =p_{r}\\
l &  =\varphi(p_{1}\cdots p_{r-1})
\end{align*}
The following table shows the values of $u-l$ for all choices of $r$ such that
$1\leq r<k\ $and $\delta(  k-r)  =-1$.%
\[%
\begin{array}
[c]{l|l|l|l}%
r & u & l & u-l\\  \hline
1& 3 & 1 & 2 \\
3 & 11 & 8 & 3
\end{array}
\]
Thus $\alpha^{+}(n)= 3$. 
\end{example}

\begin{example}
[$\beta^{+}$]Let $n=3\cdot5\cdot 7\cdot71$. We will compute $\beta^{+}(n)$. Let%
\begin{align*}
u &  =\min\left\{  p_{r+1},p_{1}\cdots p_{r}\right\}  \\
l &  =\psi(p_{1}\cdots p_{r})
\end{align*}
The following table shows the values of $u-l$ for all choices of $r$ such that
$1\leq r<k\ $and $\delta\left(  k-r\right)  =-1$.%
\[%
\begin{array}
[c]{l|l|l|l}%
r & u & l & u-l\\  \hline
1 & 3 & 1 & 2\\
3 & 71 & 57 & 14
\end{array}
\]
Thus $\beta^{+}(n)= 14$. 
\end{example}

\begin{example}
[$\gamma^{+}$]Let $n=7\cdot11\cdot 13\cdot 17$.
 We will compute $\gamma^{+}(n)$. Let
\begin{align*}
u &  =p_{1}\cdots p_{r}\\
B &  =\left\{  d\;:\;d\ |\ n\text{ and }\omega\left(  d\right)  <r\right\}  \\
l &  =\sum_{d\in B}\mu\left(  n/d\right)  \ d
\end{align*}
The following table shows the values of $u-l$ for all choices of $r$ such that
$1\leq r<k$ and $\delta\left(  k-r\right)  =-1$.%
\[%
\begin{array}
[c]{l|l|l|l|l}%
r & u & B & l & u-l\\ \hline
1& 7 & \{1 \}  &  1 & 6 \\
3 & 7\cdot 11\cdot 13 & \{1,\, 7,\, 11,\, 13,\, 17,\,
 77,\, 91,\, 119,\, 143,\, 187,\, 221\} & 791 & 210
\end{array}
\]
Thus $\gamma^{+}(n)=210$. 
\end{example}

\begin{example}
[$\varepsilon^{+}$]Let $n=3\cdot7\cdot11\cdot13$. We will compute
$\varepsilon^{+}(n)$. Let
\begin{align*}
u  &  =\min A\\
l  &  =\sum_{d\in B}\mu\left(n/d\right) \ d
\end{align*}
The following table shows the values of $u-l$ for some $A$ and $B$
such that $A\uplus B=\underline{n}\setminus\{n\}$, $A\neq\emptyset$, and
$\mathcal{C}^{+}(B)$.
There are 1566 such pairs of  $A$ and $B$, so we only list a few below.
\[%
\begin{array}
[c]{l|l|l|l|l}%
A & B & u & l & u-l\\\hline
\{3,\, 7,\, 11,\, 13,\, 3\cdot 7,\,\ldots \}
&  \{1\}  &  3    &  1  &   2    \\
\{11,\, 13,\, 3\cdot 11,\, 3\cdot 13,\, 7\cdot 11
,\, \ldots \}
 & \{1,\, 3,\, 7,\, 3\cdot 7\}   &   11 &  12  &  -1   \\
\{13,\, 3\cdot 13,\, 7\cdot 11,\, 7\cdot 13,\,\ldots \} 
& \{1,\, 3,\, 7,\, 11,\, 3\cdot 7,\, 3\cdot 11\}
  &  13    &   34   &    -21   \\
\{7\cdot 11,\, 7\cdot 13,\, 11\cdot 13,\,\ldots \}&
  \{1,\, 3,\, 7,\, 11,\, 13,\, 3\cdot 7,\, 3\cdot 11,\, 3\cdot 13\}
 &  7\cdot 11   &   60  &    17  \\
\cdots & \cdots 
\end{array}
\]
Thus $\varepsilon^{+}(n)=17$.
\end{example}

\begin{example}
[$\alpha^{-}$]Let $n=5\cdot7\cdot11\cdot13$. 
We will compute $\alpha^{-}(n)$. Let%
\begin{align*}
u &  =p_{r}\\
l &  =\varphi(p_{1}\cdots p_{r-1})
\end{align*}
The following table shows the values of $u-l$ for all choices of $r$ such that
$1\leq r<k\ $and $\delta\left(  k-r\right)  =+1$.%
\[%
\begin{array}
[c]{l|l|l|l}%
r & u & l & u-l\\ \hline
2& 7 & 4 &3
\end{array}
\]
Thus $\alpha^{-}(n)= 3$. 
\end{example}

\begin{example}
[$\gamma^{-}$]Let $n=7\cdot11\cdot13\cdot 17$.
We will compute $\gamma^{-}(n)$. Let%
\begin{align*}
u &  =p_{1}\cdots p_{r}\\
B &  =\left\{  d\;:\;d\ |\ n\text{ and }\omega\left(  d\right)  <r\right\}
 \\
l &  =\sum_{d\in B}-\mu\left(  n/d\right)  \ d
\end{align*}
The following table shows the values of $u-l$ for all choices of $r$ such
that
$1\leq r<k$ and $\delta\left(  k-r\right)  =+1$.%
\[%
\begin{array}
[c]{l|l|l|l|l}%
r & u & B & l & u-l\\ \hline
2& 7\cdot 11 & \{1,\, 7,\, 11,\, 13,\, 17 \}  &  47 & 30
\end{array}
\]
Thus $\gamma^{-}(n)= 30$. 
\end{example}

\begin{example}
[$\delta^{-}$]
\label{ex:middle}Let $n=3\cdot5\cdot7$. We will compute $\delta^{-}(n)$. Note%
\begin{align*}
\delta^{-}(n)  & =2\frac{n}{p_{1}}-\psi(n)\\
& =2\frac{3\cdot5\cdot7}{3}-\left(  3\cdot5\cdot7-\left(  3-1\right)  \left(
5-1\right)  \left(  7-1\right)  \right)  \\
& =13
\end{align*}
Thus $\delta^{-}\left(  n\right)  =13$.
\end{example}

\begin{example}
[$\varepsilon^{-}$]Let $n=7\cdot11\cdot13\cdot41$. We will compute
$\varepsilon^{-}(n)$. Let
\begin{align*}
u  &  =\min A\\
l  &  =\sum_{d\in B}-\mu\left(n/d\right) \ d
\end{align*}
The following table shows the values of $u-l$ for some $A$ and
$B$
such that $A\uplus B=\underline{n}\setminus\{n\}$, $A\neq\emptyset$, and
$\mathcal{C}^{-}(B)$.
There are 13301 such pairs of $A$ and $B$, so we only list a few below.
\[%
\begin{array}
[c]{l|l|l|l|l}%
A & B & u & l & u-l\\\hline
\{11,\, 13,\, 41,\, 7\cdot 11,\, 7\cdot 13,\,\ldots\}
&  \{1,\, 7\}        &   11 &    6   &    5    \\
 \{1,\, 41, \, 7\cdot 11,\, 7\cdot 13,\, 11\cdot 13,\,\ldots \}&
   \{7,\, 11,\, 13\}    &      1    &   31 &     -30  \\
\{41,\, 7\cdot 11,\, 7\cdot 13,\,  11\cdot 13,\, \ldots \} &
 \{1,\, 7,\, 11,\, 13\}   &    41    &  30    &  11   \\
 \cdots & \cdots 
\end{array}
\]
Thus $\varepsilon^{-}(n)=11$.
\end{example}

\subsection{Examples for Sufficient condition on $g(\Psi_{n})$
(Theorem~\ref{thm:condition_delta})}

\begin{example}
Let $n=3\cdot7\cdot11$. Then $\delta^{-}(n)=43$. Consider
\[
\frac{1}{2}\left(  \frac{3\cdot7\cdot11}{3}\right)  =\frac{77}{2}\leq
\delta^{-}(n)
\]
Computation of $\Psi_{n}$ shows that $g(\Psi_{n})=43$, as expected from the theorem.
\end{example}

\begin{example}
Let $n=3\cdot5\cdot7$. In Example~\ref{ex:middle} we showed that $\delta
^{-}(n)=13$ and $g(\Psi_{n})=13$. Consider the following
\[
\frac{1}{2}\left(  \frac{3\cdot5\cdot7}{3}\right)  =\frac{35}{2}>\delta
^{-}(n)
\]
Therefore, the condition is sufficient but not necessary.
\end{example}

\begin{example}
Let $n=7\cdot11\cdot13$. Then $\delta^{-}(n)=5$. Consider
\[
\frac{1}{2}\left(  \frac{7\cdot11\cdot13}{7}\right)  =\frac{143}{2}>\delta
^{-}(n)
\]
Computation of $\Psi_{n}$ shows that $g(\Psi_{n})=6$. Thus $\delta^{-}(n)\neq
g(\Psi_{n})$.
\end{example}



\section{Quality}

\label{sec:quality}

\subsection{Quality of Special bounds $\alpha^{\pm}$, $\beta^{\pm}$,
$\gamma^{\pm}$ and $\delta^{-}$ (Theorems~\ref{thm:alpha},\ \ref{thm:beta}%
,~\ref{thm:gamma} and~\ref{thm:delta})}

\label{sec:quality_alpha_beta_gamma_delta}

The following graphs show how often the lower bound is equal to the maximum gap.%

\[\hspace{-40pt}
\begin{array}
[c]{ccc}%
f^{+}%
(b)\raisebox{-0.5\height}{\includegraphics[height=1.2in]{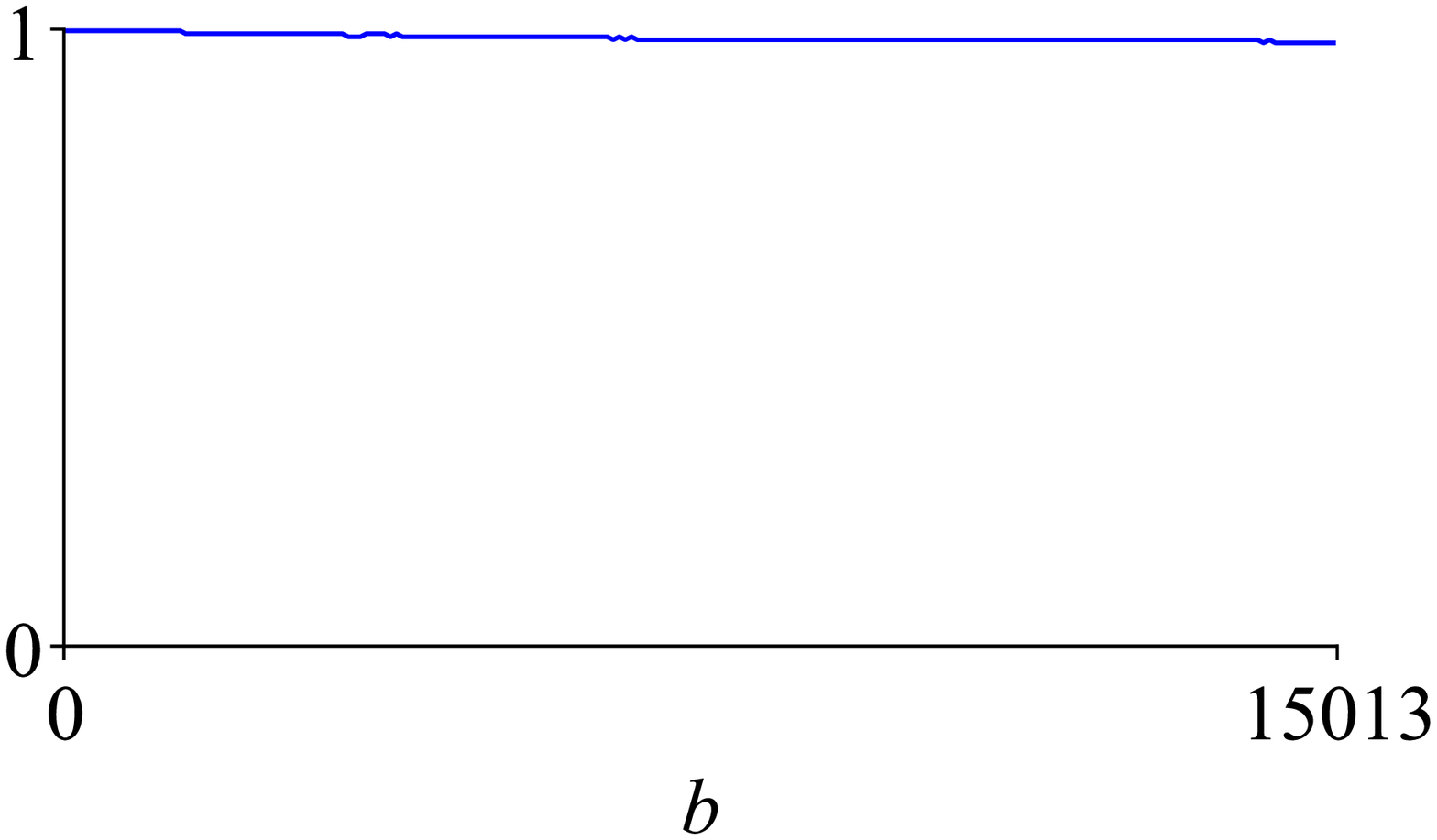}} &
& f^{-}%
(b)\raisebox{-0.5\height}{\includegraphics[height=1.2in]{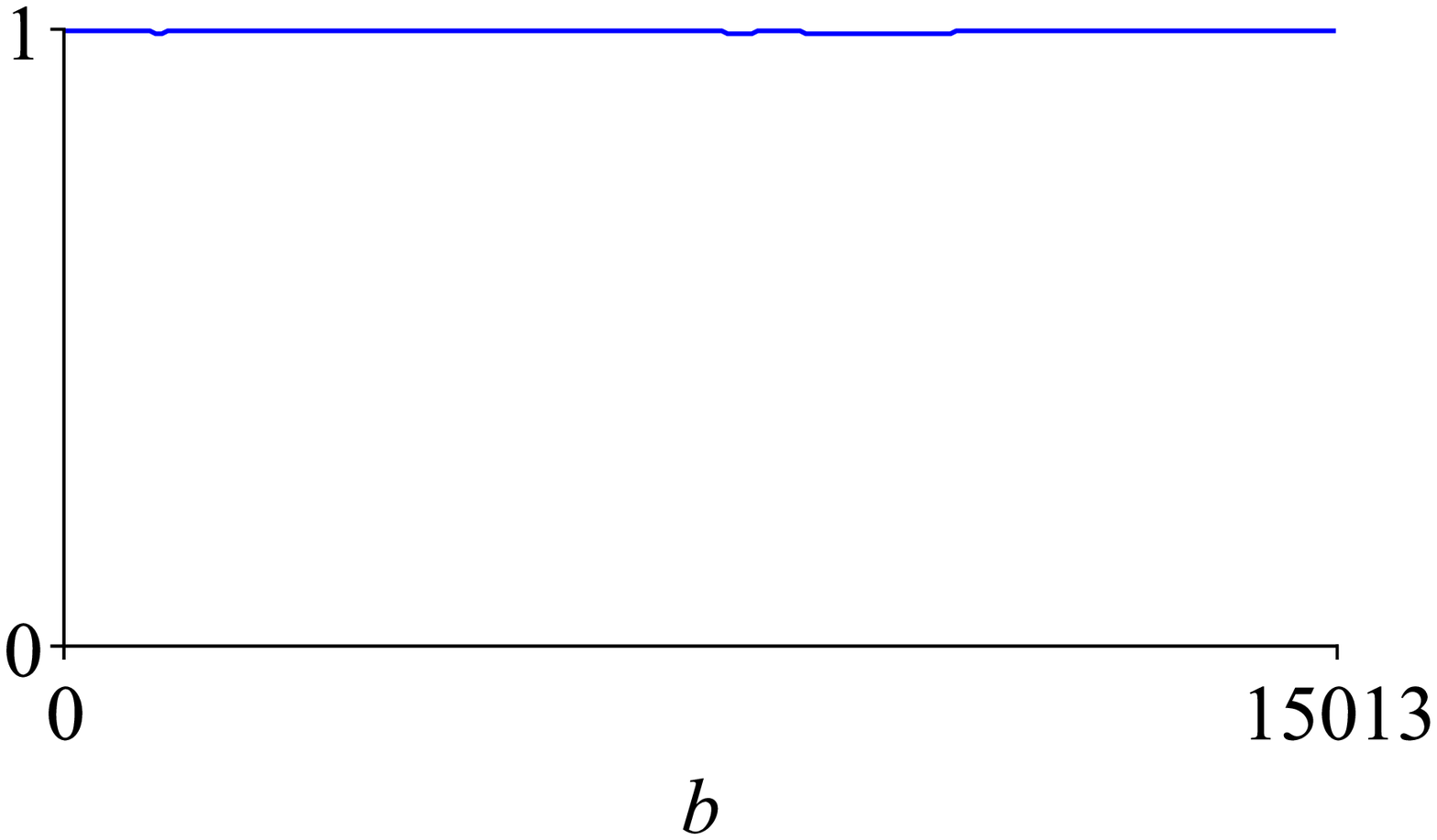}}\\
\displaystyle f^{+}(b)=\frac{\#\left\{  n<b\;:\;g(\Phi_{n})=\max\{\alpha
^{+}(n),\,\beta^{+}(n),\,\gamma^{+}(n)\}\right\}  }{\#\left\{  n<b\right\}
\hfill} &  & \displaystyle f^{-}(b)=\frac{\#\left\{  n<b\;:\;g(\Psi_{n}%
)=\max\{\alpha^{-}(n),\,\beta^{-}(n),\,\gamma^{-}(n),\,\delta^{-}%
(n)\}\right\}  }{\#\left\{  n<b\right\}  \hfill}%
\end{array}
\]
\noindent In the above graphs, $f^{+}(15013)=0.9829$
 and $f^{-}(15013)= 0.9984$.

\subsection{Quality of General bound $\varepsilon^{\pm}$
(Theorem~\ref{thm:epsilon})}

\label{sec:quality_e}

The following graphs show how often the lower bound is equal to the maximum gap.%

\[%
\begin{array}
[c]{ccc}%
f^{+}%
(b)\raisebox{-0.5\height}{\includegraphics[height=1.2in]{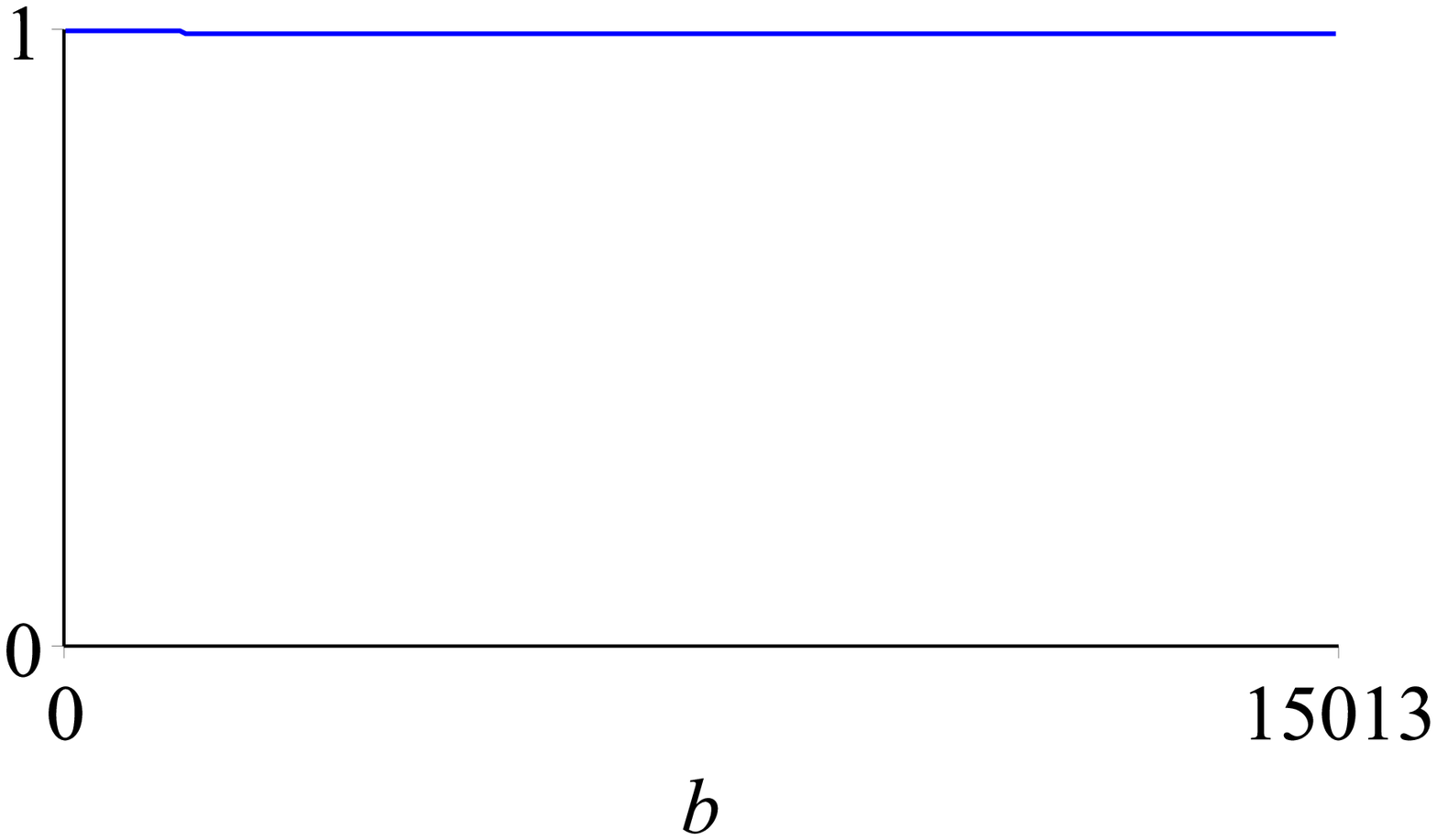}} &
& f^{-}%
(b)\raisebox{-0.5\height}{\includegraphics[height=1.2in]{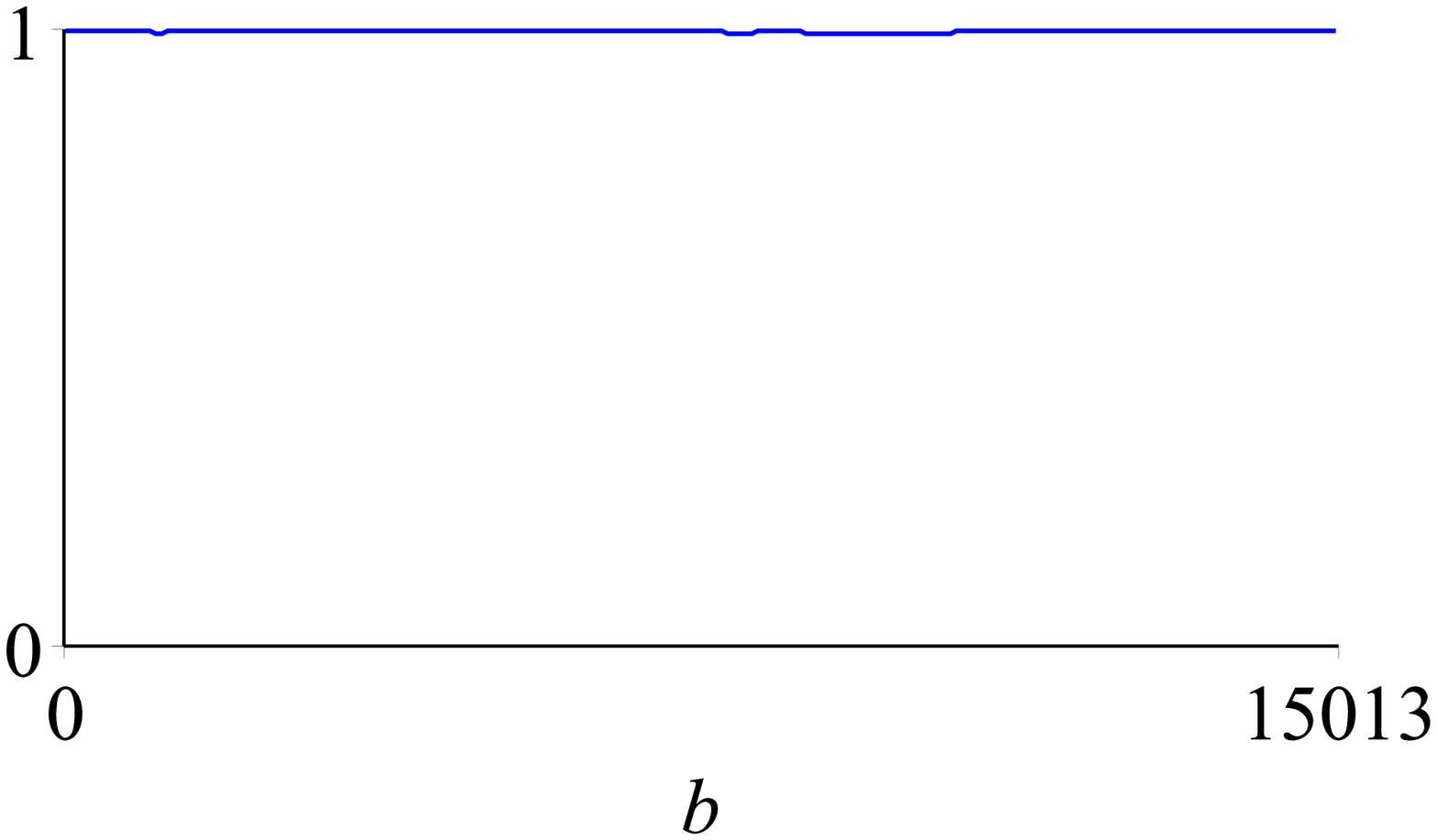}}\\
\displaystyle f^{+}(b)=\frac{\#\{n<b\;:\;g(\Phi_{n})=\varepsilon^{+}%
(n)\}}{\#\{n<b\}\hfill} &  & \displaystyle f^{-}(b)=\frac{\#\{n<b\;:\;g(\Psi
_{n})=\varepsilon^{-}(n)\}}{\#\{n<b\}\hfill}%
\end{array}
\]

\noindent In the above graphs, $f^{+}(15013)=0.9957$ and $f^{-}(15013)=0.9984
$.

\subsection{Quality of Sufficient condition on $g(\Psi_{n})$
(Theorem~\ref{thm:condition_delta})}

The following plots show the following ratio for various values of $k$ and $p
$.%

\[
r=\frac{\#\left\{  n\;:\;p_{k}\leq b,\,p_{1}=p,\,\delta^{-}(n)\geq\frac{1}%
{2}\frac{n}{p_{1}}\right\}  }{\#\left\{  \,n\;:\;p_{k}\leq b,\,p_{1}%
=p\right\}  \hfill}%
\]

\noindent We observe that in all cases, the ratio goes to 1, as expected from
the theorem.%

\[%
\begin{array}
[c]{ccc}%
k=2,\, p=3 &  & k=2,\, p=11\\
\includegraphics[height=1.2in]{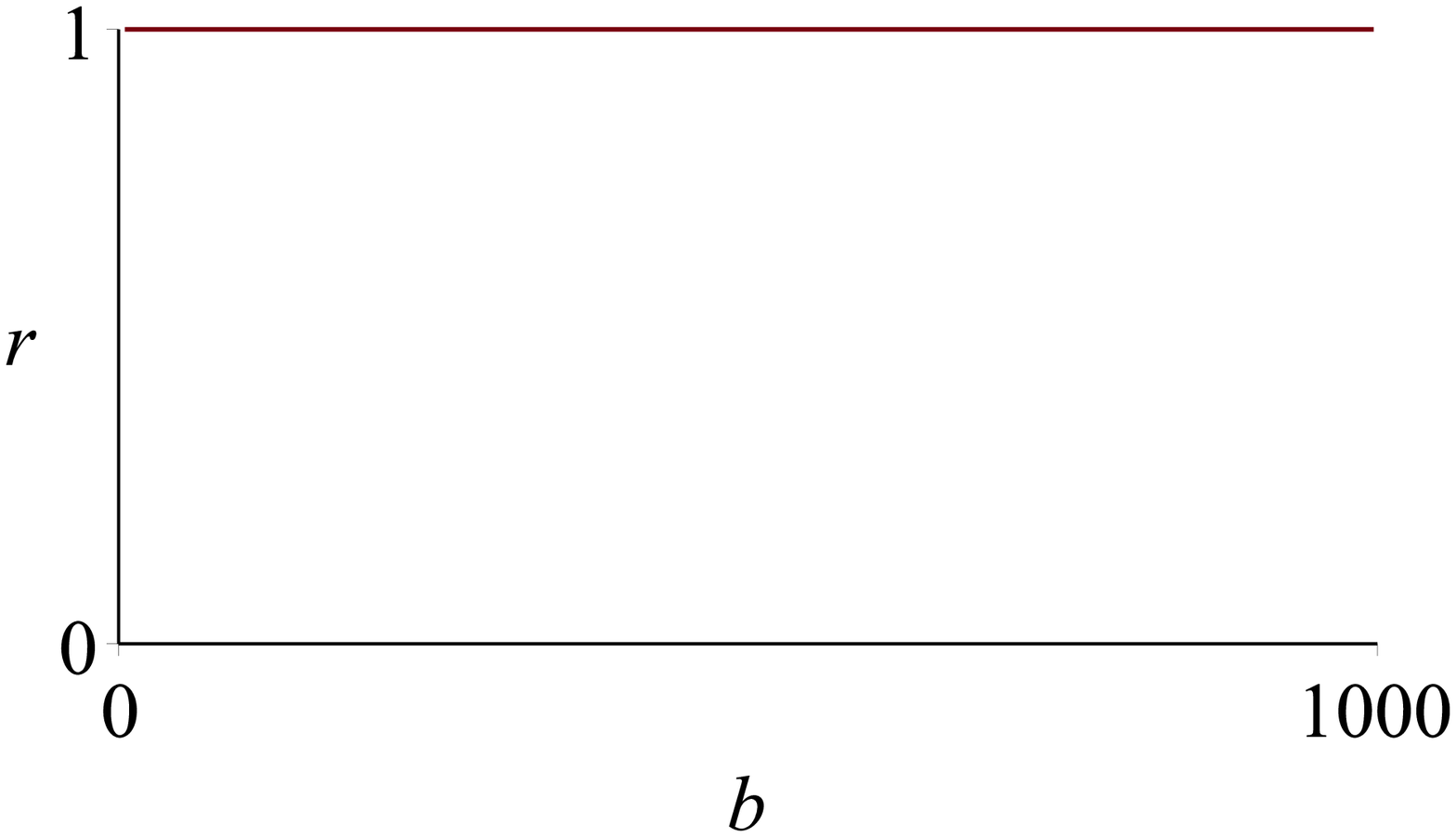} &  &
\includegraphics[height=1.2in]{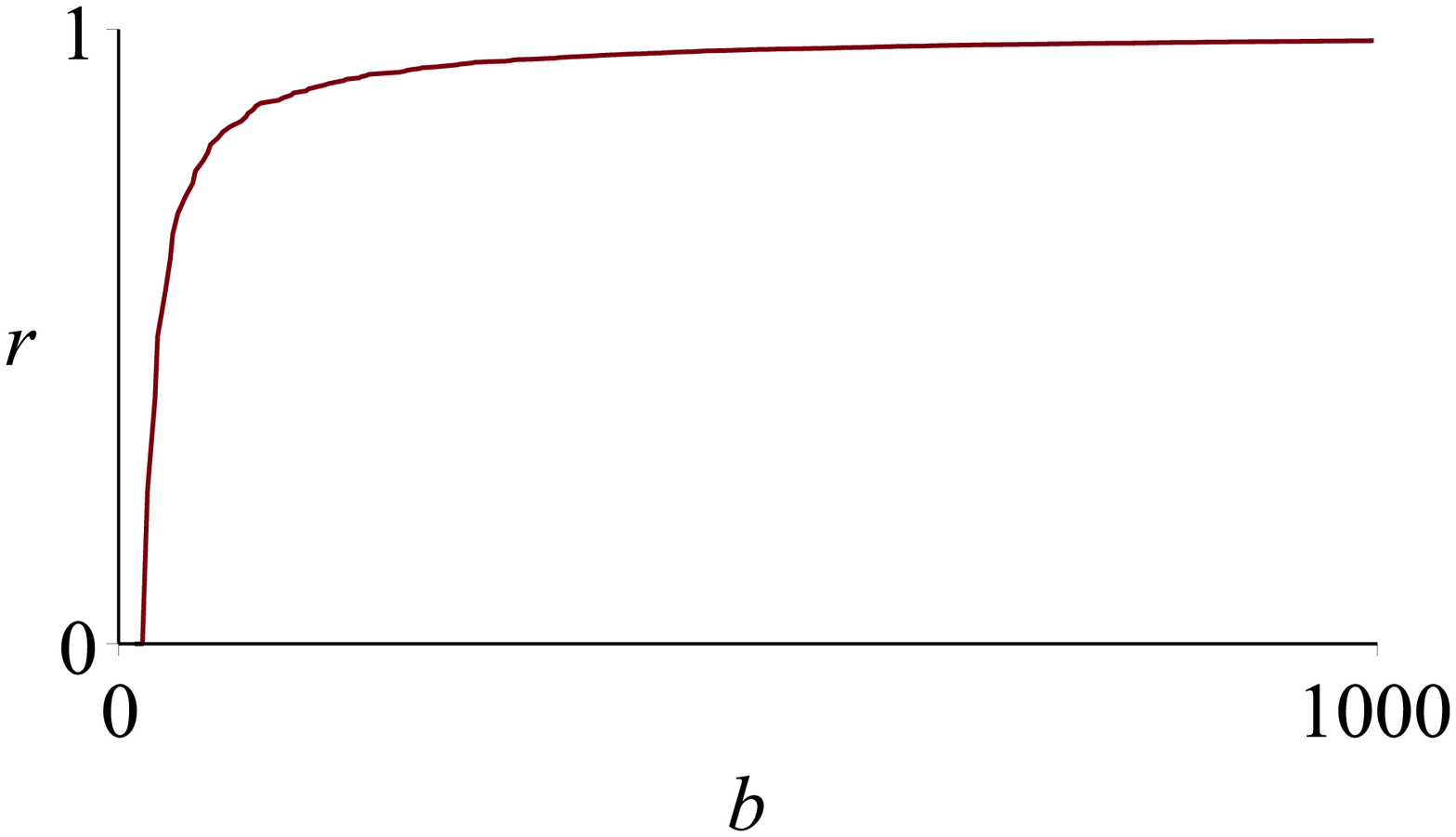}\\
k=3,\, p=3 &  & k=3,\, p=11\\
\includegraphics[height=1.2in]{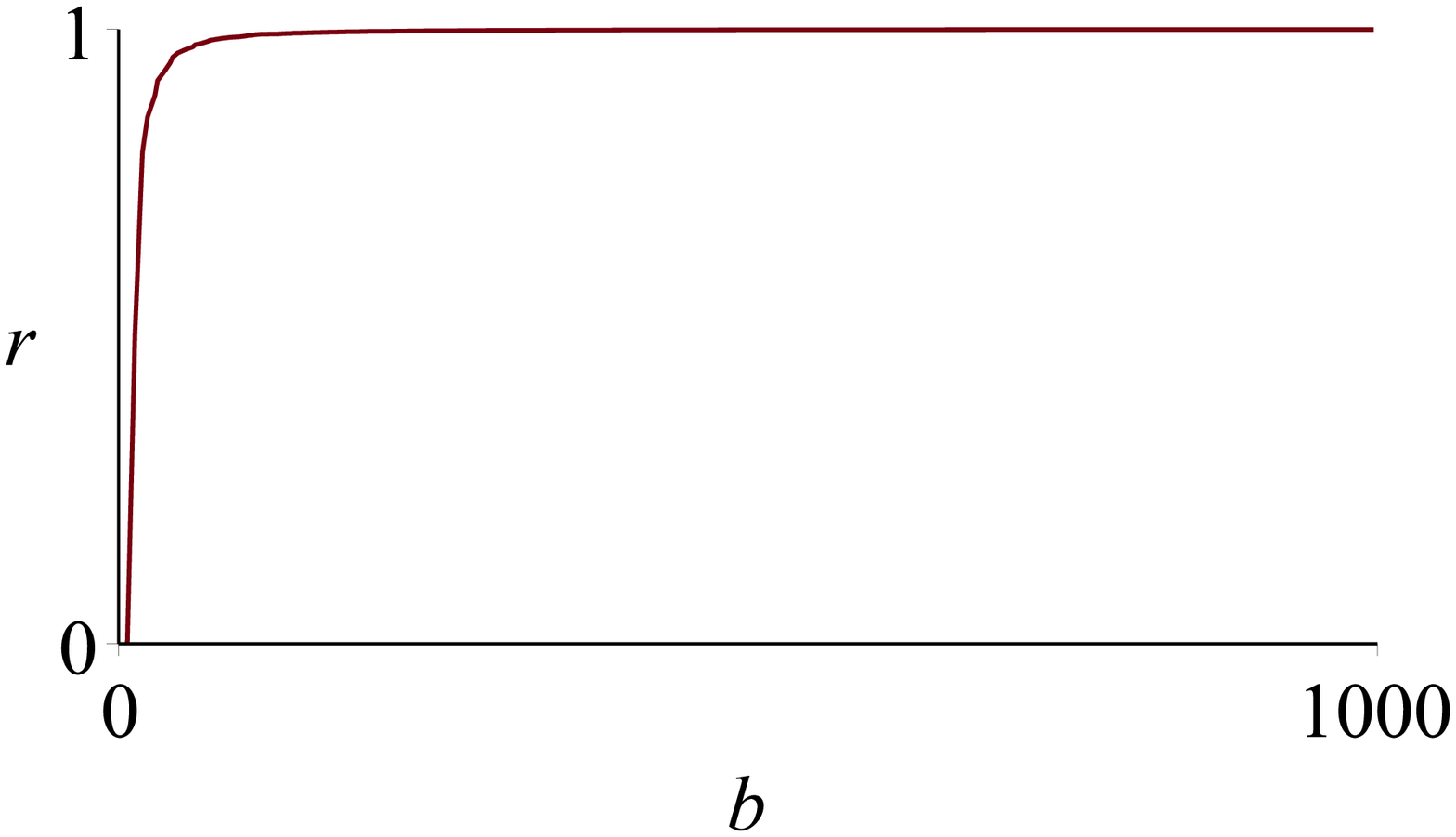}\hspace{20pt} &  &
\includegraphics[height=1.2in]{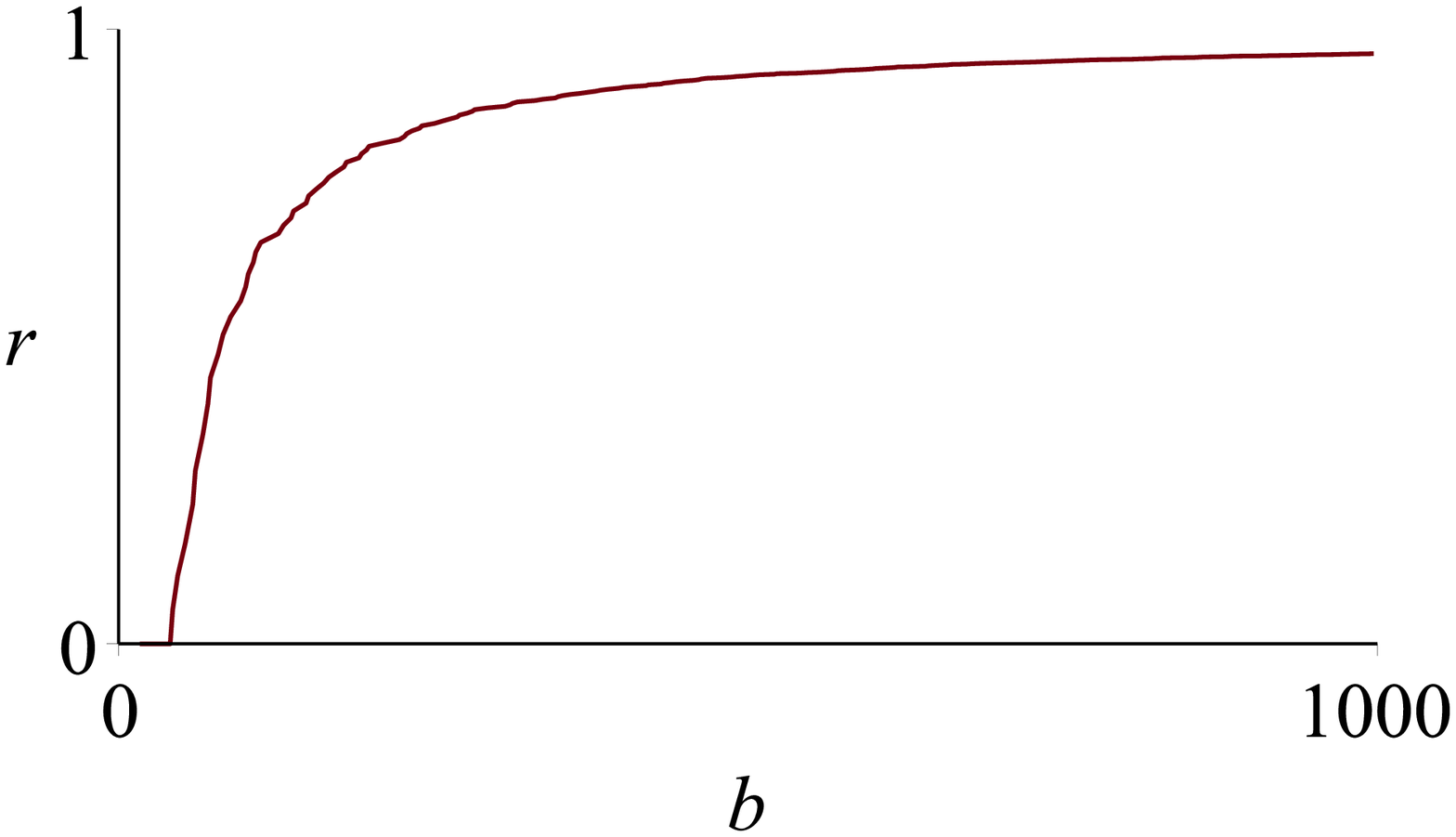}\\
&  &
\end{array}
\]


\section{Proof}

\label{sec:proof} In this section, we prove the main results
(Theorems~\ref{thm:alpha},~\ref{thm:beta},~\ref{thm:gamma},~\ref{thm:delta},~\ref{thm:epsilon}
and~\ref{thm:condition_delta}). We will first prove the general lower bound
$\varepsilon^{\pm}$ (Theorem~\ref{thm:epsilon}). Then we will prove the other
four special lower bounds $\alpha^{\pm},\,\beta^{\pm},\,\gamma^{\pm}$
 and $\delta^{-}$
(Theorems~\ref{thm:alpha},~\ref{thm:beta},~\ref{thm:gamma} and~\ref{thm:delta}) as certain
restrictions of Theorem~\ref{thm:epsilon}. After that, we will prove the
sufficient condition on $g(\Psi_{n})$ (Theorem~\ref{thm:condition_delta}). In
order to simplify the presentation of the proof, we introduce some notations.

\begin{notation}%
\[%
\begin{array}
[c]{rlll@{\;}lll}%
n_{r}=p_{1}\cdots p_{r} &  &  & u(A)=\min A &  &  & l^{\pm}%
(B)=\displaystyle\sum_{d\in B}\pm \mu\left(n/d\right) \ d
\end{array}
\]

\end{notation}

\subsection{Proof of General bound $\varepsilon^{\pm}$
(Theorem~\ref{thm:epsilon})}

We divide the proof into several lemmas.
\begin{notation}
Let
\[
F_{C}:=\prod_{c\in C}(x^{c}-1)
\]
and $F(C)=1$ if $C=\emptyset$.
\end{notation}

\begin{lemma}
\label{lem:condition_polynomial} We have that $\displaystyle \frac{F_{B^{\pm}%
}}{F_{B^{\mp}}}$ is a polynomial if
\begin{align*}
\mathcal{C}^{\pm}(B)  &  = \text{true}\\
B  &  \subset\underline{n}%
\end{align*}

\end{lemma}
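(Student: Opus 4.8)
The plan is to prove that $F_{B^{\pm}}/F_{B^{\mp}}$ is a polynomial by reducing the claim to a statement about the multiplicity of cyclotomic factors. Recall that over $\mathbb{Q}$ we have the factorization $x^c - 1 = \prod_{e \mid c} \Phi_e(x)$, so the cyclotomic polynomial $\Phi_e$ divides $x^c - 1$ precisely when $e \mid c$. Since each $\Phi_e$ is irreducible, $F_{B^{\pm}}/F_{B^{\mp}}$ is a polynomial if and only if, for every $e$, the multiplicity of $\Phi_e$ in the numerator is at least its multiplicity in the denominator. Thus the first step is to express, for a fixed $e$, the multiplicity of $\Phi_e$ in $F_{B^{\pm}} = \prod_{c \in B^{\pm}}(x^c - 1)$ as the count $\#\{c \in B^{\pm} : e \mid c\}$, and similarly for the denominator.

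The second step is to rewrite these multiplicities in the notation already available. For a fixed $e$, the condition $e \mid c$ together with $c \in B$ means exactly $c \in B \cap \overline{e}$. Hence the multiplicity of $\Phi_e$ in $F_{B^{\pm}}$ equals $\#(B^{\pm} \cap \overline{e})$ and the multiplicity in $F_{B^{\mp}}$ equals $\#(B^{\mp} \cap \overline{e})$. So polynomiality is equivalent to the inequality
\[
\#(B^{\pm} \cap \overline{e}) \;\geq\; \#(B^{\mp} \cap \overline{e})
\quad \text{for every } e.
\]
The third step is to observe that whenever no element of $B$ is divisible by $e$, i.e. $B \cap \overline{e} = \emptyset$, both counts are zero and the inequality holds trivially; so the inequality only carries content for those $e$ dividing at least one element of $B$, which is exactly the set $\underline{B}$. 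Therefore it suffices to verify the inequality for $e \in \underline{B}$, and this is precisely the hypothesis $\mathcal{C}^{\pm}(B)$.

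I expect the main (if modest) obstacle to be the careful bookkeeping of the valuation argument: one must confirm that the $\Phi_e$-adic valuation of a product of the $x^c - 1$ is additive and equals the count $\#\{c : e \mid c\}$, and then argue that comparing valuations for all $e$ is genuinely equivalent to the ratio being a polynomial (this uses unique factorization over $\mathbb{Q}[x]$ into irreducible cyclotomic factors). Once that correspondence is set up, the reduction from "all $e$" to "$e \in \underline{B}$" is immediate. A mild subtlety worth checking is the degenerate case $B^{\mp} = \emptyset$ (or $B = \emptyset$), where the denominator $F_{B^{\mp}} = 1$ and the claim holds vacuously; the hypothesis $B \subset \underline{n}$ is not strictly needed for polynomiality here but is recorded because it is the setting in which the lemma will later be applied.
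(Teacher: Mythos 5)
Your proposal is correct and follows essentially the same route as the paper: factor each $x^{c}-1$ into cyclotomic polynomials, identify the multiplicity of $\Phi_{d}$ in $F_{B^{\pm}}$ as $\#(B^{\pm}\cap\overline{d})$, note that the counts vanish for $d\notin\underline{B}$, and invoke $\mathcal{C}^{\pm}(B)$ to make all exponents nonnegative. The only cosmetic difference is that you phrase the comparison as a valuation/unique-factorization argument (yielding an equivalence), while the paper directly rewrites the ratio as $\prod_{d\in\underline{B}}\Phi_{d}^{\#(B^{\pm}\cap\overline{d})-\#(B^{\mp}\cap\overline{d})}$; the substance is identical.
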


\begin{proof}
Let $C\subset\underline{n}$. Consider the following equalities.
\[
F_{C}=\prod_{c\in C}(x^{c}-1)=\prod_{c\in C}\prod_{d|c}\Phi_{d}=\prod
_{d\in\underline{n}}\prod_{\substack{c\in C\\d|c}}\Phi_{d}=\prod
_{d\in\underline{n}}\Phi_{d}^{\#\{c\in C\;:\;d|c\}}=\prod_{d\in\underline{n}%
}\Phi_{d}^{\#(C\cap\overline{d})}%
\]
Thus%
\[
\frac{F_{B^{\pm}}}{F_{B^{\mp}}}=\ \frac{\prod\limits_{d\in\underline{n}}%
\Phi_{d}^{\#(B^{\pm}\cap\overline{d})}}{\prod\limits_{d\in\underline{n}}%
\Phi_{d}^{\#(B^{\mp}\cap\overline{d})}}=\ \prod_{d\in\underline{n}}\Phi
_{d}^{\#(B^{\pm}\cap\overline{d})-\#(B^{\mp}\cap\overline{d})}%
\]
Note that for $d\in\underline{n}\setminus\underline{B}$, we have $\#(B^{+}%
\cap\overline{d})=0$ and $\#(B^{-}\cap\overline{d})=0$. Thus,
\[
\frac{F_{B^{\pm}}}{F_{B^{\mp}}}=\ \prod_{d\in\underline{B}}\Phi_{d}%
^{\#(B^{\pm}\cap\overline{d})-\#(B^{\mp}\cap\overline{d})}%
\]
Recall $\mathcal{C}^{\pm}(B)\iff\forall d\in\underline{B}\;\;\#(B^{\pm}%
\cap\overline{d})\;\geq\;\#(B^{\mp}\cap\overline{d})$. Therefore,
$\frac{F_{B^{\pm}}}{F_{B^{\mp}}}$ is a polynomial.
\end{proof}

\begin{lemma}
\label{thm:codulo}We have%
\[
P^{\pm}\equiv_{x^{u(A)+1}}\pm%
\begin{cases}
-(-1)^{|A|}G^{\pm}-x^{u\left(  A\right)  } & \text{if\ }u\left(  A\right)  \in
A^{\pm}\\
-(-1)^{|A|}G^{\pm}+x^{u\left(  A\right)  } & \text{if\ }u\left(  A\right)  \in
A^{\mp}%
\end{cases}
\]
where%
\begin{align*}
P^{\pm}  &  =\frac{F_{\underline{n}^{\pm}}\ F_{\{n\}^{\mp}}}{F_{\underline{n}%
^{\mp}}}\\
G^{\pm}  &  =\frac{F_{B^{\pm}}}{F_{B^{\mp}}}\\
A\uplus B  &  =\underline{n}\setminus\{n\}\\
A  &  \neq\emptyset\\
\mathcal{C}^{\pm}(B)  &  =\text{true}\\
|B|  &  =\#\{b\in B\}
\end{align*}
and the notation $ \square \equiv_{x^{u(A)+1}} \triangle $ stands for $  x^{u(A)+1}| \square-\triangle $.

\end{lemma}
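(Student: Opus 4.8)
The plan is to recognize $P^{\pm}$ through the Möbius factorization and then reduce the whole statement to a short formal–power–series computation modulo $x^{u(A)+1}$. First I would record that $P^{+}=\Phi_n$ and $P^{-}=\Psi_n$ (equivalently, I may just keep the defining quotients), which follow from $\Phi_n=\prod_{d\mid n}(x^{d}-1)^{\mu(n/d)}=F_{\underline n^{+}}/F_{\underline n^{-}}$ and $\Psi_n=(x^{n}-1)\Phi_n^{-1}=F_{\underline n^{-}}(x^{n}-1)/F_{\underline n^{+}}$. Because every factor $x^{c}-1$ has constant term $-1$, all the $F_{C}$ are invertible in the ring of formal power series, so these quotients can be manipulated freely modulo $x^{u(A)+1}$.

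The first substantive step is to split the divisor set. Since $A\uplus B=\underline n\setminus\{n\}$ and $n\in\underline n^{+}$ (because $\mu(1)=+1$), I get $F_{\underline n^{\pm}}=F_{A^{\pm}}F_{B^{\pm}}F_{\{n\}\cap\underline n^{\pm}}$, and the factor $F_{\{n\}^{\mp}}=F_{\{n\}\cap\underline n^{\mp}}$ in the numerator cancels the corresponding factor in the denominator, yielding
\[
P^{\pm}=\frac{F_{A^{\pm}}}{F_{A^{\mp}}}\,G^{\pm}\,F_{\{n\}\cap\underline n^{\pm}} .
\]
Here $\mathcal{C}^{\pm}(B)$ enters only to guarantee, via Lemma~\ref{lem:condition_polynomial}, that $G^{\pm}$ is an honest polynomial, so that the asserted congruence is a statement about polynomials. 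Since $A$ consists of proper divisors of $n$, we have $u(A)<n$, so modulo $x^{u(A)+1}$ the trailing factor is $F_{\{n\}\cap\underline n^{+}}=x^{n}-1\equiv-1$ and $F_{\{n\}\cap\underline n^{-}}=1$; that is, it is $\mp1$, giving $P^{\pm}\equiv \mp\,(F_{A^{\pm}}/F_{A^{\mp}})\,G^{\pm}$.

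Next I would evaluate $F_{A^{\pm}}/F_{A^{\mp}}=\prod_{a\in A}(x^{a}-1)^{\mu(n/a)}$ modulo $x^{u(A)+1}$. Every $a\in A$ satisfies $a\ge u(A)$, with equality for exactly one element; for $a>u(A)$ one has $(x^{a}-1)^{\pm1}\equiv-1$, contributing $(-1)^{|A|-1}$, while the single factor at $a=u(A)$ survives as $(x^{u(A)}-1)^{\epsilon}$, where $\epsilon=+1$ if $u(A)\in A^{\pm}$ and $\epsilon=-1$ if $u(A)\in A^{\mp}$. Using the power–series inverse $(x^{u(A)}-1)^{-1}\equiv-1-x^{u(A)}$ this gives
\[
\frac{F_{A^{\pm}}}{F_{A^{\mp}}}\equiv
\begin{cases}
-(-1)^{|A|}\bigl(x^{u(A)}-1\bigr) & u(A)\in A^{\pm},\\
(-1)^{|A|}\bigl(1+x^{u(A)}\bigr) & u(A)\in A^{\mp}.
\end{cases}
\]
Finally I multiply by $\mp G^{\pm}$, reduce with $x^{u(A)}G^{\pm}\equiv G^{\pm}(0)\,x^{u(A)}=(-1)^{|B|}x^{u(A)}$ (the constant term of $G^{\pm}=F_{B^{\pm}}/F_{B^{\mp}}$ being $(-1)^{|B|}$), and collapse the signs using the parity identity $|A|+|B|=2^{k}-1$, which is odd, so $(-1)^{|A|+|B|}=-1$. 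Collecting terms in each of the two cases reproduces exactly the two branches of the claimed formula.

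I expect the main obstacle to be disciplined sign bookkeeping rather than any conceptual difficulty: three nested sources of signs — the $(-1)^{|A|-1}$ from the surviving factors, the inversion $(x^{u(A)}-1)^{-1}\equiv-1-x^{u(A)}$ in the case $u(A)\in A^{\mp}$, and the collapse $(-1)^{|A|+|B|}=-1$ — must be tracked simultaneously and matched against the outer $\mp$ coming from $F_{\{n\}\cap\underline n^{\pm}}$. Carrying out the computation in the two cases $u(A)\in A^{\pm}$ and $u(A)\in A^{\mp}$ in parallel, as displayed above, keeps this manageable.
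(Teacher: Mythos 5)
Your proposal is correct; I checked the sign assembly in both branches and it reproduces the lemma exactly. The skeleton matches the paper's proof — split $\underline{n}$ into $A$, $B$ and $\{n\}$, reduce every factor $x^{c}-1$ with $c>u(A)$ to $-1$ modulo $x^{u(A)+1}$, single out the factor at $c=u(A)$ according to whether $u(A)\in A^{\pm}$ or $A^{\mp}$, and close with $G^{\pm}(0)=(-1)^{|B|}$ together with $(-1)^{|A|+|B|}=(-1)^{2^{k}-1}=-1$ — but your technical execution is genuinely different. The paper never divides: it cross-multiplies to the polynomial identity $F_{\{n\}^{\mp}}F_{A^{\mp}}F_{B^{\mp}}P^{\pm}=F_{A^{\pm}}F_{B^{\pm}}F_{\{n\}^{\pm}}F_{\{n\}^{\mp}}$, invokes Lemma~\ref{lem:condition_polynomial} to replace $F_{B^{\pm}}$ by $F_{B^{\mp}}G^{\pm}$, cancels $F_{B^{\mp}}$ after observing $\gcd(F_{B^{\mp}},x^{u+1})=1$, and in the case $u(A)\in A^{\mp}$ must remove the factor $x^{u}-1$ sitting on the $P^{\pm}$ side by multiplying through by $x^{u}+1$ and using $x^{2u}-1\equiv_{x^{u+1}}-1$. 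You instead work in the ring of formal power series, where every $F_{C}$ is a unit, so both cases are handled uniformly via $(x^{a}-1)^{\pm1}\equiv-1$ for $a>u(A)$ and $(x^{u(A)}-1)^{-1}\equiv-1-x^{u(A)}$; this eliminates both the gcd/cancellation step and the $(x^{u}+1)$ maneuver, and it cleanly isolates the role of $\mathcal{C}^{\pm}(B)$ (needed only, via Lemma~\ref{lem:condition_polynomial}, to make $G^{\pm}$ a polynomial — the same single use the paper makes of it). The one bridging remark you should state explicitly: since $P^{\pm}$ and $G^{\pm}$ are polynomials, divisibility of the relevant difference by $x^{u(A)+1}$ in the power series ring is the same as the polynomial divisibility asserted in the lemma, so the change of ring is harmless. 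Net comparison: the paper's route stays entirely inside polynomial arithmetic at the cost of an asymmetric and slightly longer two-case computation; yours buys symmetry and brevity at the cost of that one line of justification.
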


\begin{proof}
For simplicity, in the rest of this proof we will use $u$ instead of $u(A)$.
Since $A\neq\emptyset$, $u(A)$ is defined. Note%
\begin{align*}
F_{\underline{n}^{\mp}}\ P^{\pm}  &  =F_{\underline{n}^{\pm}}\ F_{\{n\}^{\mp}%
}\\
F_{\{n\}^{\mp}}\ F_{A^{\mp}}\ F_{B^{\mp}}\ P^{\pm}  &  =F_{A^{\pm}}%
\ F_{B^{\pm}}\ F_{\{n\}^{\pm}}\ F_{\{n\}^{\mp}}%
\end{align*}

\begin{itemize}
\item[Case:] $u\in A^{\pm}$. Since $u=\min A$ we have%
\begin{align*}
F_{A^{\pm}\setminus\{u\}}  &  \equiv_{x^{u+1}}(-1)^{|A^{\pm}|-1}\\
F_{A^{\mp}}  &  \equiv_{x^{u+1}}(-1)^{|A^{\mp}|}\\
F_{\{n\}^{\pm}}  &  \equiv_{x^{u+1}}(-1)^{|\{n\}^{\pm}|}\\
F_{\{n\}^{\mp}}  &  \equiv_{x^{u+1}}(-1)^{|\{n\}^{\mp}|}%
\end{align*}
Thus
\[
F_{B^{\mp}}\ P^{\pm}\equiv_{x^{u+1}}(-1)^{|A|-1+|\{n\}^{\pm}|}(x^{u}%
-1)\ F_{B^{\pm}}%
\]
Since $\mathcal{C}^{\pm}(B)$, by Lemma~\ref{lem:condition_polynomial} we have
\[
F_{B^{\mp}}\ P^{\pm}\equiv_{x^{u+1}}(-1)^{|A|-1+|\{n\}^{\pm}|}(x^{u}%
-1)\ F_{B^{\mp}}\ G^{\pm}%
\]
Note that $0$ is the only root of $x^{u+1}$ and $F_{B^{\mp}}(0)=(-1)^{|B^{\mp
}|}$. Hence $\gcd(F_{B^{\mp}},x^{u+1})=1$. Thus we can cancel $F_{B^{\mp}}$
from both sides, obtaining
\begin{align*}
P^{\pm}  &  \equiv_{x^{u+1}}(-1)^{|A|-1+|\{n\}^{\pm}|}(x^{u}-1)G^{\pm}\\
&  \equiv_{x^{u+1}}-(-1)^{|A|-1+|\{n\}^{\pm}|}G^{\pm}+(-1)^{|A|-1+|\{n\}^{\pm
}|}x^{u}G^{\pm}%
\end{align*}
Note that $G^{\pm}(0)=(-1)^{|B|}$. Thus we have%
\begin{align*}
P^{\pm}  &  \equiv_{x^{u+1}}-(-1)^{|A|-1+|\{n\}^{\pm}|}G^{\pm}%
+(-1)^{|A|-1+|\{n\}^{\pm}|+\left\vert B\right\vert }x^{u}\\
&  \equiv_{x^{u+1}}\left(  -1\right)  ^{-1+|\{n\}^{\pm}|}\left(
-(-1)^{|A|}G^{\pm}+(-1)^{|A|+|B|}x^{u}\right) \\
&  \equiv_{x^{u+1}}\pm\left(  -(-1)^{|A|}G^{\pm}+(-1)^{2^{k}-1}x^{u}\right) \\
&  \equiv_{x^{u+1}}\pm\left(  -(-1)^{|A|}G^{\pm}-x^{u}\right)
\end{align*}
which proves the lemma.

\item[Case:] $u\in A^{\mp}$. Since $u=\min A$ we have%
\begin{align*}
F_{A^{\pm}}  &  \equiv_{x^{u+1}}(-1)^{|A^{\pm}|}\\
F_{A^{\mp}\setminus\{u\}}  &  \equiv_{x^{u+1}}(-1)^{|A^{\mp}|-1}\\
F_{\{n\}^{\pm}}  &  \equiv_{x^{u+1}}(-1)^{|\{n\}^{\pm}|}\\
F_{\{n\}^{\mp}}  &  \equiv_{x^{u+1}}(-1)^{|\{n\}^{\mp}|}%
\end{align*}
Thus
\[
(x^{u}-1)F_{B^{\mp}}\cdot P^{\pm}\equiv_{x^{u+1}}(-1)^{|A|-1+|\{n\}^{\pm}%
|}F_{B^{\pm}}%
\]
Since $\mathcal{C}^{\pm}(B)$, by Lemma~\ref{lem:condition_polynomial} we have
\[
(x^{u}-1)F_{B^{\mp}}\cdot P^{\pm}\equiv_{x^{u+1}}(-1)^{|A|-1+|\{n\}^{\pm}%
|}F_{B^{\mp}}\cdot G^{\pm}%
\]
Note that $0$ is the only root of $x^{u+1}$ and $F_{B^{\mp}}(0)=(-1)^{|B^{\mp
}|}$. Hence $\gcd(F_{B^{\mp}},x^{u+1})=1$. Thus we can cancel $F_{B^{\mp}}$
from both sides, obtaining
\[
(x^{u}-1)\ P^{\pm}\equiv_{x^{u+1}}(-1)^{|A|-1+|\{n\}^{\pm}|}G^{\pm}%
\]
Multiplying both sides by $(x^{u}+1)$, we have
\begin{align*}
(x^{u}+1)(x^{u}-1)P^{\pm}  &  \equiv_{x^{u+1}}(x^{u}+1)(-1)^{|A|-1+|\{n\}^{\pm
}|}G^{\pm}\\
(x^{2u}-1)P^{\pm}  &  \equiv_{x^{u+1}}(-1)^{|A|-1+|\{n\}^{\pm}|}G^{\pm
}+(-1)^{|A|-1+|\{n\}^{\pm}|}x^{u}G^{\pm}\\
-P^{\pm}  &  \equiv_{x^{u+1}}(-1)^{|A|-1+|\{n\}^{\pm}|}G^{\pm}%
+(-1)^{|A|-1+|\{n\}^{\pm}|}x^{u}G^{\pm}\\
P^{\pm}  &  \equiv_{x^{u+1}}-(-1)^{|A|-1+|\{n\}^{\pm}|}G^{\pm}%
-(-1)^{|A|-1+|\{n\}^{\pm}|}x^{u}G^{\pm}%
\end{align*}
Note that $G^{\pm}(0)=(-1)^{|B|}$. Thus we have%
\begin{align*}
P^{\pm}  &  \equiv_{x^{u+1}}-(-1)^{|A|-1+|\{n\}^{\pm}|}G^{\pm}%
-(-1)^{|A|-1+|\{n\}^{\pm}|+|B|}x^{u}\\
&  \equiv_{x^{u+1}}\left(  -1\right)  ^{-1+|\{n\}^{\pm}|}\left(
-(-1)^{|A|}G^{\pm}-(-1)^{|A|+|B|}x^{u}\right) \\
&  \equiv_{x^{u+1}}\pm\left(  -(-1)^{|A|}G^{\pm}-(-1)^{2^{k-1}}x^{u}\right) \\
&  \equiv_{x^{u+1}}\pm\left(  -(-1)^{|A|}G^{\pm}+x^{u}\right)
\end{align*}
which proves the lemma.
\end{itemize}
\end{proof}

\begin{proof}
[Proof of Theorem~\ref{thm:epsilon}-(1)]

Using the same notation as in Lemma~\ref{thm:codulo}, note
\[
P^{+}=\Phi_{n}%
\]
Let $A$ and $B$ be such that $A\uplus B=\underline{n}\setminus\{n\}$,
$A\neq\emptyset$, and $\mathcal{C}^{+}(B)$. By Lemma~\ref{thm:codulo}, we have%

\[
\Phi_{n}=+%
\begin{cases}
-(-1)^{|A|}G^{+}-x^{u\left(  A\right)  }+x^{u\left(  A\right)  +1}H & \text{if
}u\left(  A\right)  \in A^{+}\\
-(-1)^{|A|}G^{+}+x^{u\left(  A\right)  }+x^{u\left(  A\right)  +1}H & \text{if
}u\left(  A\right)  \in A^{-}%
\end{cases}
\]
for some polynomial $H$. Note%
\[
\deg G^{+}=\deg\left(  \frac{F_{B^{+}}}{F_{B^{-}}}\right)  =\sum_{d\in B^{+}%
}d-\sum_{d\in B^{-}}d=\sum_{d\in B}\mu\left(  n/d\right)  d=l^{+}(B)
\]
If $u(A)\leq l^{+}(B)$, then clearly
\[
g(\Phi_{n})\geq u(A)-l^{+}(B)
\]
If $u(A)>l^{+}(B)$, then $x^{l^{+}(B)}$ and $x^{u(A)}$ appear in $\Phi_{n}$,
so we have%
\[
g(\Phi_{n})\geq u(A)-l^{+}(B)
\]
Thus%
\[
g(\Phi_{n})\geq\max_{\substack{A\uplus B=\underline{n}\setminus\{n\}\\A\neq
\emptyset\\\mathcal{C}^{+}(B)}}u(A)-l^{+}(B)=\varepsilon^{+}(n)
\]
The theorem has been proved.
\end{proof}

\begin{proof}
[Proof of Theorem~\ref{thm:epsilon}-(2)]Using the same notation as in
Lemma~\ref{thm:codulo}, note%
\[
P^{-}=\Psi_{n}%
\]
Let $A$ and $B$ be such that $A\uplus B=\underline{n}\setminus\{n\}$,
$A\neq\emptyset$, and $\mathcal{C}^{-}(B)$. By Lemma~\ref{thm:codulo}, we have%
\[
\Psi_{n}=-%
\begin{cases}
-(-1)^{|A|}G^{-}-x^{u\left(  A\right)  }+x^{u\left(  A\right)  +1}H &
\text{if\ }u\left(  A\right)  \in A^{-}\\
-(-1)^{|A|}G^{-}+x^{u\left(  A\right)  }+x^{u\left(  A\right)  +1}H &
\text{if\ }u\left(  A\right)  \in A^{+}%
\end{cases}
\]
for some polynomial $H$. Note%
\[
\deg G^{-}=\deg\left(  \frac{F_{B^{-}}}{F_{B^{+}}}\right)  =\sum_{d\in B^{-}%
}d-\sum_{d\in B^{+}}d=\sum_{d\in B}-\mu\left(  n/d\right)  d=l^{-}(B)
\]
If $u(A)\leq l^{-}(B)$, then clearly
\[
g(\Psi_{n})\geq u(A)-l^{-}(B)
\]
If $u(A)>l^{-}(B)$, then $x^{l^{-}(B)}$ and $x^{u(A)}$ appear in $\Psi_{n}$,
so we have%
\[
g(\Psi_{n})\geq u(A)-l^{-}(B)
\]
Thus
\[
g(\Psi_{n})\geq\max_{\substack{A\uplus B=\underline{n}\setminus\{n\}\\A\neq
\emptyset\\\mathcal{C}^{-}(B)}}u(A)-l^{-}(B)=\varepsilon^{-}(n)
\]
The theorem has been proved.
\end{proof}

\subsection{Proof of Special bounds $\alpha^{\pm},\ \beta^{\pm}$ and
$\gamma^{\pm}$ (Theorems~\ref{thm:alpha}, \ref{thm:beta} and \ref{thm:gamma})}

We restrict the choice of $B$ as mentioned in Section~\ref{sec:results}. Note
that the restrictions are very similar. To deal with them at the same time, we
will use the following uniform notation%
\[
\Omega_{jr}=\left\{  c\in\underline{n_{j}}:\omega\left(  c\right)  <r\right\}
\]
Note that $B$ for $\alpha^{\pm},\ \beta^{\pm}$ and $\gamma^{\pm}$ can be
compactly written as $B=\Omega_{r-1,r},\ B=\Omega_{rr}$ and $B=\Omega_{kr}$
respectively. In the following three lemmas, we will show that $\mathcal{C}%
^{\pm}\left(  \Omega_{jr}\right)  $ holds.

\begin{lemma}
\label{lem:num_sum_binom} We have, for $s\in\left\{  +,-\right\}  $, that%
\[
\#\left(  \Omega_{jr}^{s}\cap\overline{d}\right)  =\sum_{_{\substack{0\leq
i<r-\omega\left(  d\right)  \\\rho(i)=s\rho(k-\omega\left(  d\right)  )}%
}}\binom{j-\omega\left(  d\right)  }{i}%
\]
for $1\leq r<k$, $r-1\leq j\leq k$ and $d\in\underline{\Omega_{jr}}.$
\end{lemma}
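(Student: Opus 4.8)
The plan is to set up an explicit divisor bijection that reduces the count to a sum of binomial coefficients. First I would observe that $\Omega_{jr}$ is closed downward under divisibility: if $c\mid n_j$ with $\omega(c)<r$ and $d\mid c$, then $d\mid n_j$ and $\omega(d)\le\omega(c)<r$, so $d\in\Omega_{jr}$. Hence $\underline{\Omega_{jr}}=\Omega_{jr}$, and the hypothesis $d\in\underline{\Omega_{jr}}$ simply means $d\mid n_j$ and $\omega(d)<r$. Every $c\in\Omega_{jr}\cap\overline{d}$ then factors uniquely as $c=d\,e$ with $e\mid n_j/d$, since everything in sight is squarefree (so $d\mid c\mid n_j$ is equivalent to $c/d\mid n_j/d$). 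The map $c\mapsto e$ is a bijection from $\Omega_{jr}\cap\overline{d}$ onto a subset of the divisors of $n_j/d$, and I will count preimages by grading according to $i:=\omega(e)$.

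Next I would translate the two defining constraints of $\Omega_{jr}^{s}\cap\overline{d}$ into constraints on $i$. Since $c$ is squarefree and divides $n=p_1\cdots p_k$, we have $\omega(n/c)=k-\omega(c)$, so the sign condition $\mu(n/c)=s$ becomes $\rho(k-\omega(c))=s$. Writing $\omega(c)=\omega(d)+i$ and using $\rho(a+b)=\rho(a)\rho(b)$ together with $\rho(-i)=\rho(i)$, this reads $\rho(k-\omega(d))\,\rho(i)=s$, equivalently $\rho(i)=s\,\rho(k-\omega(d))$. Similarly the condition $\omega(c)<r$ becomes $i<r-\omega(d)$, and $i\ge 0$ always. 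These are exactly the index conditions appearing in the claimed sum.

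Finally I would count, for each admissible $i$, the number of $e\mid n_j/d$ with $\omega(e)=i$. Since $n_j/d$ is squarefree with exactly $j-\omega(d)$ prime factors, this number is $\binom{j-\omega(d)}{i}$ (a choice of $i$ primes among those dividing $n_j/d$). Summing over the admissible $i$ yields
\[
\#\bigl(\Omega_{jr}^{s}\cap\overline{d}\bigr)=\sum_{\substack{0\le i<r-\omega(d)\\\rho(i)=s\rho(k-\omega(d))}}\binom{j-\omega(d)}{i},
\]
which is the assertion.

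The computation is entirely routine, so I would be upfront that there is no deep obstacle here; the only place demanding care is the bookkeeping in the second step. I must make sure the rewriting $\mu(n/c)=\rho(k-\omega(c))$ genuinely uses squarefreeness of $n$ and the fact that $n$ has exactly $k$ prime factors, and that the resulting parity condition is stated as $\rho(i)=s\,\rho(k-\omega(d))$ rather than in terms of $\rho(k)$ — the shift by $\omega(d)$ is precisely what keeps the parity constraint compatible with the binomial upper index $j-\omega(d)$. The range hypotheses $1\le r<k$ and $r-1\le j\le k$ are needed only to ensure $\Omega_{jr}$ actually contains elements of every relevant $\omega$-value so the statement is non-degenerate; they play no further role.
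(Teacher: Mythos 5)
Your proof is correct and follows essentially the same route as the paper's: both factor each element $c\in\Omega_{jr}^{s}\cap\overline{d}$ as $d$ times a divisor of $n_j/d$, translate the conditions $\omega(c)<r$ and $\mu(n/c)=s$ into $i<r-\omega(d)$ and $\rho(i)=s\rho(k-\omega(d))$, and count divisors with $\omega$-value $i$ by $\binom{j-\omega(d)}{i}$. The only cosmetic difference is that the paper carries the sign condition as $\mu(l)=s\mu(n/d)$ on the quotient $l=c/d$ before converting to parities, whereas you convert $\mu(n/c)$ to a parity first and then split $\omega(c)=\omega(d)+i$; the bookkeeping is equivalent.
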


\begin{proof}
Note%
\[%
\begin{array}
[c]{rlll}%
\#\left(  \Omega_{jr}^{s}\cap\overline{d}\right)  & =\#\big \{c\in
\underline{n_{j}} & \;:\;\omega\left(  c\right)  <r, & \mu
(n/c)=s,\,d\ |\ c\ \big \}\\
& =\#\big \{ld\in\underline{n_{j}} & \;:\;\omega\left(  ld\right)  <r, &
\mu(n/\left(  ld\right)  )=s\big \}\\
& =\#\big \{l\in\underline{n_{j}/d} & \;:\;\omega(l)<r-\omega\left(  d\right)
, & \mu(l)=s\mu\left(  n/d\right)  \big \}
\end{array}
\]
Note%
\[
s\mu\left(  n/d\right)  =s\rho\left(  k-\omega\left(  d\right)  \right)
\]
Thus%
\begin{align*}
\#\left(  \Omega_{jr}^{s}\cap\overline{d}\right)   &  =\#%
{\displaystyle\biguplus\limits_{_{\substack{0\leq i<r-\omega\left(  d\right)
\\\rho(i)=s\rho(k-\omega\left(  d\right)  )}}}}
\left\{  l\in\underline{n_{j}/d}:\omega(l)=i\right\} \\
&  =\sum_{_{\substack{0\leq i<r-\omega\left(  d\right)  \\\rho(i)=s\rho
(k-\omega\left(  d\right)  )}}}\#\left\{  l\in\underline{n_{j}/d}%
:\omega(l)=i\right\} \\
&  =\sum_{_{\substack{0\leq i<r-\omega\left(  d\right)  \\\rho(i)=s\rho
(k-\omega\left(  d\right)  )}}}\binom{\omega\left(  n_{j}/d\right)  }{i}\\
&  =\sum_{_{\substack{0\leq i<r-\omega\left(  d\right)  \\\rho(i)=s\rho
(k-\omega\left(  d\right)  )}}}\binom{j-\omega\left(  d\right)  }{i}%
\end{align*}
which proves the lemma.
\end{proof}

\begin{lemma}
[Telescoping sum]\label{thm:telescope} We have%
\[
\sum_{0\leq i\leq u}\rho(i)\binom{t}{i}=\left\{
\begin{array}
[c]{ll}%
\rho(u)\binom{t-1}{u} & \text{if }t\geq1\\
1 & \text{if }t=0
\end{array}
\right.
\]

\end{lemma}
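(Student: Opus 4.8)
The plan is to prove the identity by telescoping, as the name of the lemma suggests. First I would dispose of the degenerate case $t=0$ separately: here $\binom{0}{i}=0$ for every $i\geq 1$, so the entire sum collapses to the single term $\rho(0)\binom{0}{0}=1$, which matches the stated value. This case genuinely must be excluded from the main formula, since $\rho(u)\binom{t-1}{u}$ would require the upper index $t-1=-1$, lying outside the combinatorial range of the binomial coefficients in use.

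For $t\geq 1$ the key step is to express the summand as a difference of consecutive terms of a single sequence. Applying Pascal's rule $\binom{t}{i}=\binom{t-1}{i}+\binom{t-1}{i-1}$ and writing $a_i:=\rho(i)\binom{t-1}{i}$, I would check that
\[
\rho(i)\binom{t}{i}=\rho(i)\binom{t-1}{i}+\rho(i)\binom{t-1}{i-1}=a_i-a_{i-1},
\]
where the final equality uses $\rho(i)=-\rho(i-1)$ to convert the second term into $-a_{i-1}$. Summing over $0\leq i\leq u$ then telescopes to $a_u-a_{-1}$, and since $\binom{t-1}{-1}=0$ we have $a_{-1}=0$, leaving $a_u=\rho(u)\binom{t-1}{u}$, which is precisely the desired right-hand side.

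I do not expect any serious obstacle here; the proof is elementary. The only points demanding care are the sign accounting in the identity $\rho(i)\binom{t-1}{i-1}=-a_{i-1}$ (where the extra factor $\rho(1)=-1$ enters) and the explicit evaluation of the boundary term $a_{-1}$. A short induction on $u$ would work equally well, its inductive step collapsing exactly to Pascal's rule, but the telescoping argument is cleaner and accounts for the lemma's title.
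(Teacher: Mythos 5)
Your proof is correct and is essentially the same as the paper's: both cases are handled identically, and your termwise telescoping via $a_i=\rho(i)\binom{t-1}{i}$ is just a repackaging of the paper's argument, which applies Pascal's rule, reindexes one of the resulting sums using $\rho(i)=-\rho(i-1)$, and cancels down to the boundary terms $-\rho(-1)\binom{t-1}{-1}+\rho(u)\binom{t-1}{u}$ with $\binom{t-1}{-1}=0$.
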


\begin{proof}
When $t\geq1$, we have%
\begin{align*}
\sum_{0\leq i\leq u}\rho(i)\binom{t}{i}  &  =\sum_{0\leq i\leq u}\rho
(i)\binom{t-1}{i-1}+\sum_{0\leq i\leq u}\rho(i)\binom{t-1}{i}\\
&  =-\sum_{-1\leq i\leq u-1}\rho(i)\binom{t-1}{i}+\sum_{0\leq i\leq u}%
\rho(i)\binom{t-1}{i}\\
&  =-\rho\left(  -1\right)  \binom{t-1}{-1}+\rho\left(  u\right)  \binom
{t-1}{u}\\
&  =\rho(u)\binom{t-1}{u}%
\end{align*}
When $t=0$, we have%
\[
\sum_{0\leq i\leq u}\rho(i)\binom{t}{i}=\rho\left(  0\right)  \binom{0}%
{0}+\sum_{1\leq i\leq u}\rho(i)\binom{0}{i}=1+0=1
\]

\end{proof}

\begin{lemma}
\label{lem:C_true} We have $\mathcal{C}^{\pm}(\Omega_{jr})$ for $1\leq r<k$,
$\rho(k-r)=\mp1$ and $r-1\leq j\leq k.$
\end{lemma}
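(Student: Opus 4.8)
The plan is to collapse the inequality that defines $\mathcal{C}^{\pm}(\Omega_{jr})$ into the sign of a single binomial coefficient, feeding the binomial-sum formula of Lemma~\ref{lem:num_sum_binom} into the telescoping identity of Lemma~\ref{thm:telescope}. Fix $d\in\underline{\Omega_{jr}}$ and write $m=\omega(d)$. Since $d$ divides some $c\in\Omega_{jr}$ with $\omega(c)<r$, we have $m<r$, so the upper index $r-m-1$ below is $\geq 0$; by hypothesis $r-1\leq j\leq k$. By definition $\mathcal{C}^{\pm}(\Omega_{jr})$ requires $\#(\Omega_{jr}^{\pm}\cap\overline d)\geq\#(\Omega_{jr}^{\mp}\cap\overline d)$, i.e. $\pm\big(\#(\Omega_{jr}^{+}\cap\overline d)-\#(\Omega_{jr}^{-}\cap\overline d)\big)\geq 0$, so the whole statement reduces to controlling this signed difference uniformly in the two cases.

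First I would apply Lemma~\ref{lem:num_sum_binom} to both terms. An index $i$ enters $\#(\Omega_{jr}^{+}\cap\overline d)$ exactly when $\rho(i)=\rho(k-m)$ and enters $\#(\Omega_{jr}^{-}\cap\overline d)$ exactly when $\rho(i)=-\rho(k-m)$, so its coefficient in the difference is $\rho(i)\rho(k-m)$. Hence
\[
\#(\Omega_{jr}^{+}\cap\overline d)-\#(\Omega_{jr}^{-}\cap\overline d)=\rho(k-m)\sum_{0\leq i\leq r-m-1}\rho(i)\binom{j-m}{i}.
\]
The inner sum is precisely the quantity evaluated by Lemma~\ref{thm:telescope}, with $t=j-m$ and upper index $u=r-m-1$.

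In the main case $j-m\geq 1$, the telescoping lemma gives inner sum $=\rho(r-m-1)\binom{j-m-1}{r-m-1}$, so
\[
\#(\Omega_{jr}^{+}\cap\overline d)-\#(\Omega_{jr}^{-}\cap\overline d)=\rho(k-m)\,\rho(r-m-1)\binom{j-m-1}{r-m-1}=\rho(k+r-1)\binom{j-m-1}{r-m-1},
\]
where the last equality uses that the two copies of $m$ cancel modulo $2$. The remaining bookkeeping is the parity identity $\rho(k+r-1)=\rho(k-r)\rho(2r-1)=-\rho(k-r)=\pm 1$ under the hypothesis $\rho(k-r)=\mp 1$. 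Multiplying the displayed difference by the outer sign $\pm$ then cancels the two sign factors, leaving $\pm\big(\#(\Omega_{jr}^{+}\cap\overline d)-\#(\Omega_{jr}^{-}\cap\overline d)\big)=\binom{j-m-1}{r-m-1}\geq 0$, which is exactly $\mathcal{C}^{\pm}(\Omega_{jr})$.

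Finally I would dispose of the degenerate case $j-m=0$. Here $t=0$ forces $m=j$, and combined with $m<r\leq j+1$ this is possible only when $j=r-1$, so the index range collapses to $i=0$ and Lemma~\ref{thm:telescope} returns $1$; the difference equals $\rho(k-m)=\rho(k-r+1)$, and since $\rho(k-r+1)=-\rho(k-r)=\pm 1$ the outer sign again produces $+1\geq 0$, matching the main case. I expect the only delicate point to be this parity algebra, namely funneling every case into the single clean factor $-\rho(k-r)$ and confirming its cancellation against the outer $\pm$; once the signs are tracked correctly, nonnegativity of the binomial coefficient closes the argument.
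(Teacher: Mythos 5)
Your proof is correct and follows essentially the same route as the paper: expand both counts via Lemma~\ref{lem:num_sum_binom}, collapse the signed sum with the telescoping identity of Lemma~\ref{thm:telescope}, and split into the cases $j-\omega(d)\geq 1$ and $j-\omega(d)=0$ (where $\omega(d)=r-1$ forces the value $+1$). The only difference is cosmetic: the paper substitutes $\pm=-\rho(k-r)$ into the summation conditions up front, whereas you carry the factor $\rho(k-\omega(d))$ through and cancel it against the outer sign at the end.
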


\begin{proof}
Recall%
\[
\mathcal{C}^{\pm}(\Omega_{jr})\ \ \iff\ \ \forall d\in\underline{\Omega_{jr}%
}\;\;\#\left(  \Omega_{jr}^{\pm}\cap\overline{d}\right)  \;\geq\;\#\left(
\Omega_{jr}^{\mp}\cap\overline{d}\right)
\]
Note%
\begin{align*}
&  \#\left(  \Omega_{jr}^{\pm}\cap\overline{d}\right)  -\#\left(  \Omega
_{jr}^{\mp}\cap\overline{d}\right) \\
&  =\sum_{_{\substack{0\leq i<r-\omega\left(  d\right)  \\\rho(i)=\pm
\rho(k-\omega\left(  d\right)  )}}}\binom{j-\omega\left(  d\right)  }{i}%
-\sum_{_{\substack{0\leq i<r-\omega\left(  d\right)  \\\rho(i)=\mp
\rho(k-\omega\left(  d\right)  )}}}\binom{j-\omega\left(  d\right)  }%
{i}\ \ \text{by Lemma~\ref{lem:num_sum_binom}}\\
&  =\sum_{_{\substack{0\leq i<r-\omega\left(  d\right)  \\\rho(i)=-\rho
(k-r)\rho(k-\omega\left(  d\right)  )}}}\binom{j-\omega\left(  d\right)  }%
{i}-\sum_{_{\substack{0\leq i<r-\omega\left(  d\right)  \\\rho(i)=+\rho
(k-r)\rho(k-\omega\left(  d\right)  )}}}\binom{j-\omega\left(  d\right)  }%
{i}\ \ \text{since }\rho\left(  k-r\right)  =\mp1\\
&  =\sum_{_{\substack{0\leq i<r-\omega\left(  d\right)  \\\rho(i)=-\rho\left(
r-\omega\left(  d\right)  \right)  }}}\binom{j-\omega\left(  d\right)  }%
{i}-\sum_{_{\substack{0\leq i<r-\omega\left(  d\right)  \\\rho(i)=+\rho\left(
r-\omega\left(  d\right)  \right)  }}}\binom{j-\omega\left(  d\right)  }%
{i}\ \ \\
&  =\sum_{0\leq i<r-\omega\left(  d\right)  }-\rho\left(  r-\omega\left(
d\right)  \right)  \rho\left(  i\right)  \binom{j-\omega\left(  d\right)  }%
{i}\\
&  =-\rho\left(  r-\omega\left(  d\right)  \right)  \sum_{0\leq i<r-\omega
\left(  d\right)  }\rho\left(  i\right)  \binom{j-\omega\left(  d\right)  }%
{i}\\
&  =-\rho\left(  r-\omega\left(  d\right)  \right)  \left\{
\begin{array}
[c]{ll}%
\rho\left(  r-\omega\left(  d\right)  -1\right)  \binom{j-\omega\left(
d\right)  -1}{r-\omega\left(  d\right)  -1} & \text{if }j-\omega\left(
d\right)  \geq1\\
1 & \text{if }j-\omega\left(  d\right)  =0
\end{array}
\right.  \ \ \text{by Lemma~\ref{thm:telescope} }\\
&  =\left\{
\begin{array}
[c]{ll}%
-\rho\left(  r-\omega\left(  d\right)  \right)  \rho\left(  r-\omega\left(
d\right)  -1\right)  \binom{j-\omega\left(  d\right)  -1}{r-\omega\left(
d\right)  -1} & \text{if }j-\omega\left(  d\right)  \geq1\\
-\rho\left(  r-\omega\left(  d\right)  \right)  & \text{if }j-\omega\left(
d\right)  =0
\end{array}
\right. \\
&  =\left\{
\begin{array}
[c]{ll}%
\binom{j-\omega\left(  d\right)  -1}{r-\omega\left(  d\right)  -1} & \text{if
}j-\omega\left(  d\right)  \geq1\\
-\rho\left(  r-\omega\left(  d\right)  \right)  & \text{if }j-\omega\left(
d\right)  =0
\end{array}
\right.
\end{align*}
Consider the case $j-\omega\left(  d\right)  =0$: Since $r-1\leq
j=\omega\left(  d\right)  \leq r-1$, we have $\omega\left(  d\right)  =r-1.$
Therefore we have%

\begin{align*}
\#\left(  \Omega_{jr}^{\pm}\cap\overline{d}\right)  -\#\left(  \Omega
_{jr}^{\mp}\cap\overline{d}\right)   &  =\left\{
\begin{array}
[c]{ll}%
\binom{j-\omega\left(  d\right)  -1}{r-\omega\left(  d\right)  -1} & \text{if
}j-\omega\left(  d\right)  \geq1\\
-\rho\left(  1\right)  & \text{if }j-\omega\left(  d\right)  =0
\end{array}
\right. \\
&  =\left\{
\begin{array}
[c]{ll}%
\binom{j-\omega\left(  d\right)  -1}{r-\omega\left(  d\right)  -1} & \text{if
}j-\omega\left(  d\right)  \geq1\\
1 & \text{if }j-\omega\left(  d\right)  =0
\end{array}
\right. \\
&  \geq0
\end{align*}
\noindent which proves the lemma.
\end{proof}

\begin{lemma}
\label{lem:ul}We have, for $r-1\leq j\leq k$,%
\[
\varepsilon^{\pm}(n)\geq\max_{\substack{1\leq r<k\\\rho(k-r)=\mp1}}\left(
\min\left\{  p_{j+1},n_{r}\right\}  -\sum_{\substack{d|n_{j}\\\omega\left(
d\right)  <r}}\pm\mu\left(  n/d\right)  \ d\right)
\]
where $p_{k+1}$ is viewed as $\infty.$
\end{lemma}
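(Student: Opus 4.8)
The plan is to deduce this lemma directly from the definition of $\varepsilon^{\pm}(n)$ by feeding in, for each admissible $r$, the single partition whose $B$-part is the set $\Omega_{jr}$ studied in the previous lemmas. Concretely, for $1\le r<k$ with $\rho(k-r)=\mp1$ and the fixed $j$ with $r-1\le j\le k$, I would set $B=\Omega_{jr}$ and $A=\underline{n}\setminus(\{n\}\cup\Omega_{jr})$. The goal is to check that this pair $(A,B)$ is admissible in the maximum defining $\varepsilon^{\pm}(n)$ and that it realizes the value $\min\{p_{j+1},n_r\}-l^{\pm}(\Omega_{jr})$; taking the maximum over all such $r$ then yields the claimed inequality.

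Verifying admissibility splits into three points. First, $A\uplus B=\underline{n}\setminus\{n\}$ holds by construction once one observes $n\notin\Omega_{jr}$, which is clear since $\omega(n)=k\ge r$ forbids $n$ from satisfying the defining inequality $\omega(\cdot)<r$. Second, $\mathcal{C}^{\pm}(B)$ is exactly the content of Lemma~\ref{lem:C_true}, whose hypotheses $1\le r<k$, $\rho(k-r)=\mp1$ and $r-1\le j\le k$ are precisely the ones in force here; this is why the range of $j$ must be restricted as stated. Third, $A\neq\emptyset$ because $n_r=p_1\cdots p_r$ lies in $A$: it divides $n$, it differs from $n$ as $r<k$, and it escapes $\Omega_{jr}$ since $\omega(n_r)=r\not<r$.

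The substantive step, and the one I expect to be the only real obstacle, is the identity $u(A)=\min A=\min\{p_{j+1},n_r\}$ (with $p_{k+1}$ read as $\infty$). I would first note that both candidates lie in $A$: $n_r$ as above, and $p_{j+1}$ (when $j<k$) because $p_{j+1}\nmid n_j$ keeps it out of $\Omega_{jr}$. To see that nothing smaller survives, I would take any divisor $d\mid n$ with $d<\min\{p_{j+1},n_r\}$ and argue it must belong to $\Omega_{jr}$, hence to $B$ and not to $A$. Indeed, $d<p_{j+1}$ forces every prime factor of $d$ to lie among $p_1,\dots,p_j$ (a factor $p_i$ with $i\ge j+1$ would already give $d\ge p_{j+1}$), so $d\mid n_j$; and $d<n_r$ forces $\omega(d)<r$ (a product of $r$ or more distinct primes of $n$ is at least $p_1\cdots p_r=n_r$). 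Thus $d\in\Omega_{jr}$, proving minimality. This is a clean divisor-counting argument; the only care needed is the boundary case $j=k$, where $p_{j+1}=\infty$ and the first candidate simply drops out, leaving $\min A=n_r$.

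Finally, I would match the subtracted sum directly: by definition $l^{\pm}(\Omega_{jr})=\sum_{d\in\Omega_{jr}}\pm\mu(n/d)\,d=\sum_{d\mid n_j,\ \omega(d)<r}\pm\mu(n/d)\,d$, which is exactly the term in the statement. Substituting $u(A)$ and $l^{\pm}(B)$ into the general bound yields $\varepsilon^{\pm}(n)\ge\min\{p_{j+1},n_r\}-\sum_{d\mid n_j,\ \omega(d)<r}\pm\mu(n/d)\,d$ for each admissible $r$, and maximizing over $r$ completes the proof. The argument is short precisely because the analytic work---the polynomiality behind $\mathcal{C}^{\pm}$ and the telescoping binomial identity---has already been absorbed into Lemmas~\ref{lem:condition_polynomial} through~\ref{lem:C_true}.
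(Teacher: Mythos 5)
Your proposal is correct and follows essentially the same route as the paper: restrict the maximum defining $\varepsilon^{\pm}(n)$ to the partitions with $B=\Omega_{jr}$, invoke Lemma~\ref{lem:C_true} for admissibility, identify $\min A = \min\{p_{j+1},n_r\}$, and read off $l^{\pm}(\Omega_{jr})$ as the stated sum. Your membership-plus-minimality argument for $\min A$ is just a more explicit phrasing of the paper's set-theoretic computation of $u\left(\underline{n}\setminus\{n\}\setminus\Omega_{jr}\right)$, so the two proofs coincide in substance.
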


\begin{proof}
Note%
\begin{align*}
\varepsilon^{\pm}(n)  &  =\max_{\substack{A\uplus B=\underline{n}%
\setminus\{n\}\\A\neq\emptyset\\\mathcal{C}^{\pm}(B)}}u(A)-l^{\pm}(B)\\
&  \geq\max_{\substack{A\uplus B=\underline{n}\setminus\{n\}\\A\neq
\emptyset\\\mathcal{C}^{\pm}(B)\\1\leq r<k\\\rho(k-r)=\mp1\\B=\Omega_{jr}%
}}u(A)-l^{\pm}(B)\ \ \ \ \ \ \ \text{by restricting the choice of }B~\text{to
}\Omega_{jr}\\
&  =\max_{_{\substack{1\leq r<k\\\rho(k-r)=\mp1\\\mathcal{C}^{\pm}(\Omega
_{jr})}}}u\left(  \underline{n}\setminus\{n\}\setminus\Omega_{jr}\right)
-l^{\pm}\left(  \Omega_{jr}\right) \\
&  =\max_{\substack{1\leq r<k\\\rho(k-r)=\mp1}}u\left(  \underline{n}%
\setminus\{n\}\setminus\Omega_{jr}\right)  -l^{\pm}\left(  \Omega_{jr}\right)
\ \text{by Lemma \ref{lem:C_true}}%
\end{align*}
Note%
\begin{align*}
u\left(  \underline{n}\setminus\{n\}\setminus\Omega_{jr}\right)   &
=\min\left(  \underline{n}\setminus\{n\}\setminus\Omega_{jr}\right) \\
&  =\min\left(  \underline{n}\setminus\{n\}\setminus\left\{  c:c|n_{j}%
,\omega\left(  c\right)  <r\right\}  \right) \\
&  =\min\left\{  c:c|n,c\neq n\,\text{and}\ \left(  c\nmid n_{j}%
\ \text{or\ }\omega\left(  c\right)  \geq r\right)  \right\} \\
&  =\min\big(\min\left\{  c:c|n,c\neq n\,\text{and}\ c\nmid n_{j}\right\}
,\min\left\{  c:c|n,c\neq n\,\text{and}\ \omega\left(  c\right)  \geq
r\right\}  \big)\\
&  =\min\left(  \min\left\{  c:c|n,c\neq n\,\text{and}\ \underset{i\geq
j+1}{\exists}p_{i}|c\right\}  ,n_{r}\right) \\
&  =\min\left\{  p_{j+1},n_{r}\right\}
\end{align*}
Note%
\[
l^{\pm}\left(  \Omega_{jr}\right)  =\sum_{d\in\Omega_{jr}}\pm\mu\left(
n/d\right)  \ d=\sum_{\substack{d\in\underline{n_{j}}\\\omega\left(  d\right)
<r}}\pm\mu\left(  n/d\right)  \ d
\]
Hence%
\[
\varepsilon^{\pm}(n)\geq\max_{\substack{1\leq r<k\\\rho(k-r)=\mp1}}\left(
\min\left\{  p_{j+1},n_{r}\right\}  -\sum_{\substack{d\in\underline{n_{j}%
}\\\omega\left(  d\right)  <r}}\pm\mu\left(  n/d\right)  \ d\right)
\]

\end{proof}

\begin{lemma}
\label{lem:phi}We have%
\[
\varphi\left(  n_{r}\right)  =\sum_{d|n_{r}}\mu\left(  n_{r}/d\right)  \ d
\]

\end{lemma}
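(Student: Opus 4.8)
The plan is to exploit the fact that $n_{r}=p_{1}\cdots p_{r}$ is squarefree, so that every divisor $d\mid n_{r}$ corresponds to a subset $S\subseteq\{1,\ldots,r\}$ via $d=\prod_{i\in S}p_{i}$, with $\omega(d)=|S|$ and hence $\omega(n_{r}/d)=r-|S|$ and $\mu(n_{r}/d)=(-1)^{r-|S|}$. First I would rewrite the right-hand side as a sum over subsets:
\[
\sum_{d\mid n_{r}}\mu(n_{r}/d)\,d=\sum_{S\subseteq\{1,\ldots,r\}}(-1)^{r-|S|}\prod_{i\in S}p_{i}.
\]

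Next I would recognize this alternating subset-sum as the ordinary distributive expansion of a product of binomials, namely
\[
\sum_{S\subseteq\{1,\ldots,r\}}(-1)^{r-|S|}\prod_{i\in S}p_{i}=\prod_{i=1}^{r}(p_{i}-1),
\]
since each factor $p_{i}-1$ contributes either $p_{i}$ (when $i\in S$) or $-1$ (when $i\notin S$). Finally, because $\varphi$ is multiplicative and $n_{r}$ is a product of distinct primes with $\varphi(p_{i})=p_{i}-1$, the right-hand product equals $\varphi(n_{r})$, which completes the argument. Alternatively, the identity follows in one line by Möbius inversion of Gauss's formula $\sum_{d\mid m}\varphi(d)=m$, but the direct expansion above is self-contained and uses only the paper's convention $\mu(d)=(-1)^{\omega(d)}$.

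There is essentially no obstacle here: the only point needing care is the bookkeeping of the sign, that is, verifying that the exponent $r-|S|$ appearing in $\mu(n_{r}/d)$ matches exactly the number of prime factors of $n_{r}$ omitted from $d$. Given the squarefreeness of $n_{r}$ and the definition of $\mu$, this match is immediate, and no further calculation is required.
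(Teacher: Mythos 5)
Your proof is correct and is essentially the paper's own argument: both expand the product $(p_{1}-1)\cdots(p_{r}-1)$ by distributivity and identify each resulting term with $\mu(n_{r}/d)\,d$ for a divisor $d$ of the squarefree $n_{r}$ (the paper writes this as $(-1)^{r}(1-p_{1})\cdots(1-p_{r})=(-1)^{r}\sum_{d|n_{r}}\mu(d)\,d$, which is the same sign bookkeeping you do with $(-1)^{r-|S|}$). The only difference is the direction of the computation --- the paper starts from $\varphi(n_{r})$ and expands, while you start from the divisor sum and collapse it --- which is immaterial.
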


\begin{proof}
Note%
\[
\varphi\left(  n_{r}\right)  =\left(  p_{1}-1\right)  \cdots\left(
p_{r}-1\right)  =\left(  -1\right)  ^{r}\left(  1-p_{1}\right)  \cdots\left(
1-p_{r}\right)  =\left(  -1\right)  ^{r}\sum_{d|n_{r}}\mu\left(  d\right)
\ d=\sum_{d|n_{r}}\mu\left(  n_{r}/d\right)  \ d
\]

\end{proof}

\begin{proof}
[Proof of Theorem \ref{thm:alpha}]$\,$We set $j=r-1.$ Note%
\[
\varepsilon^{\pm}(n)\geq\max_{\substack{1\leq r<k\\\rho(k-r)=\mp1}}\left(
\min\left\{  p_{r-1+1},n_{r}\right\}  -\sum_{\substack{d|n_{r-1}%
\\\omega\left(  d\right)  <r}}\pm\mu\left(  n/d\right)  \ d\right)
\ \ \ \text{by\ Lemma \ref{lem:ul}}%
\]
Note that%
\[
\min\left\{  p_{r-1+1},n_{r}\right\}  =\min\left\{  p_{r},n_{r}\right\}
=p_{r}%
\]
Not that%
\begin{align*}
\sum_{\substack{d|n_{r-1}\\\omega\left(  d\right)  <r}}\pm\mu\left(
n/d\right)  \ d  &  =\sum_{d|n_{r-1}}\pm\mu\left(  n/d\right)  \ d\\
&  =\sum_{d|n_{r-1}}\pm\mu\left(  n/n_{r-1}\right)  \mu\left(  n_{r-1}%
/d\right)  \ d\\
&  =\sum_{d|n_{r-1}}\pm1\cdot\pm1\mu\left(  n_{r-1}/d\right)  \ d\\
&  =\sum_{d|n_{r-1}}\mu\left(  n_{r-1}/d\right)  \ d\\
&  =\varphi\left(  n_{r-1}\right)  \ \ \ \text{by Lemma \ref{lem:phi}}%
\end{align*}
Thus%
\[
\varepsilon^{\pm}(n)\geq\max_{\substack{1\leq r<k\\\rho(k-r)=\mp1}}\left(
p_{r}-\varphi\left(  n_{r-1}\right)  \right)  =\max_{\substack{1\leq
r<k\\\rho(k-r)=\mp1}}\left(  p_{r}-\varphi\left(  p_{1}\cdots p_{r-1}\right)
\right)  =\alpha^{\pm}(n)
\]
Hence%
\begin{align*}
g(\Phi_{n})  &  \geq\varepsilon^{+}(n)\geq\alpha^{+}(n)\\
g(\Psi_{n})  &  \geq\varepsilon^{-}(n)\geq\alpha^{-}(n)
\end{align*}
The theorem has been proved.
\end{proof}

\begin{proof}
[Proof of Theorem~\ref{thm:beta}]We set $j=r.$ Note
\[
\varepsilon^{\pm}(n)\geq\max_{\substack{1\leq r<k\\\rho(k-r)=\mp1}}\left(
\min\left\{  p_{r+1},n_{r}\right\}  -\sum_{\substack{d\in\underline{n_{r}%
}\\\omega\left(  d\right)  <r}}\pm\mu\left(  n/d\right)  \ d\right)
\ \ \ \text{by\ Lemma \ref{lem:ul}}%
\]
Note that
\begin{align*}
\sum_{\substack{d\in\underline{n_{r}}\\\omega\left(  d\right)  <r}}\pm
\mu\left(  n/d\right)  \ d  &  =\sum_{d\in\underline{n_{r}}\setminus\{n_{r}%
\}}\pm\mu\left(  n/n_{r}\right)  \mu\left(  n_{r}/d\right)  \ d\\
&  =\sum_{d\in\underline{n_{r}}\setminus\{n_{r}\}}\pm1\cdot\mp1\cdot\mu\left(
n_{r}/d\right)  \ d\\
&  =-\sum_{d\in\underline{n_{r}}\setminus\{n_{r}\}}\mu\left(  n_{r}/d\right)
\ d\\
&  =\mu\left(  n_{r}/n_{r}\right)  \ n_{r}-\sum_{d\in\underline{n_{r}}}%
\mu\left(  n_{r}/d\right)  \ d\\
&  =n_{r}-\sum_{d|n_{r}}\mu\left(  n_{r}/d\right)  \ d\\
&  =n_{r}-\varphi\left(  n_{r}\right)  \ \ \ \ \text{by Lemma \ref{lem:phi}}\\
&  =\psi(n_{r})
\end{align*}
Thus%
\[
\varepsilon^{\pm}(n)\geq\max_{\substack{1\leq r<k\\\rho(k-r)=\mp1}}\left(
\min\left\{  p_{r+1},n_{r}\right\}  -\psi(n_{r})\right)  =\max
_{\substack{1\leq r<k\\\rho(k-r)=\mp1}}\left(  \min\left\{  p_{r+1}%
,p_{1}\cdots p_{r}\right\}  -\psi(p_{1}\cdots p_{r})\right)  =\beta^{\pm}(n)
\]
Hence%
\begin{align*}
g(\Phi_{n})  &  \geq\varepsilon^{+}(n)\geq\beta^{+}(n)\\
g(\Psi_{n})  &  \geq\varepsilon^{-}(n)\geq\beta^{-}(n)
\end{align*}
The theorem has been proved.
\end{proof}

\begin{proof}
[Proof of Theorem~\ref{thm:gamma}]We set $j=k.$ Note%
\[
\varepsilon^{\pm}(n)\geq\max_{\substack{1\leq r<k\\\rho(k-r)=\mp1}}\left(
\min\left\{  p_{k+1},n_{r}\right\}  -\sum_{\substack{d\in\underline{n}%
\\\omega\left(  d\right)  <r}}\pm\mu\left(  n/d\right)  \ d\right)
\ \ \ \text{by\ Lemma \ref{lem:ul}}%
\]
Note%
\[
\min\left\{  p_{k+1},n_{r}\right\}  =\min\left\{  \infty,n_{r}\right\}  =n_{r}%
\]
Thus%
\[
\varepsilon^{\pm}(n)\geq\max_{\substack{1\leq r<k\\\rho(k-r)=\mp1}}\left(
n_{r}-\sum_{\substack{d\in\underline{n}\\\omega\left(  d\right)  <r}}\pm
\mu\left(  n/d\right)  \ d\right)  =\max_{\substack{1\leq r<k\\\rho(k-r)=\mp
1}}\left(  p_{1}\cdots p_{r}-\sum_{\substack{d|n\\\omega\left(  d\right)
<r}}\pm\mu\left(  n/d\right)  \ d\right)  =\gamma^{\pm}(n)
\]
Hence%
\begin{align*}
g(\Phi_{n})  &  \geq\varepsilon^{+}(n)\geq\gamma^{+}(n)\\
g(\Psi_{n})  &  \geq\varepsilon^{-}(n)\geq\gamma^{-}(n)
\end{align*}
The theorem has been proved.
\end{proof}

\subsection{Proof of Special bound $\delta^{-}$ (Theorem~\ref{thm:delta})}

It is possible to prove Theorem~\ref{thm:delta} in a similar way to the last
three theorems, by restricting $B$ as mentioned in Section~\ref{sec:results},
that is,%
\[
B=\left\{  d:d|p_{1}\cdots p_{k}\ \text{and }\omega\left(  d\right)
<k\ \text{and }d\neq p_{2}\cdots p_{k}\right\}  .
\]
However, it is simpler to prove it in a different way.

\begin{lemma}
\label{lem:middle_structure} We have
\[
\Psi_{n}(x)=H(x)\left(  x^{\frac{n}{p_{1}}}-1\right)
\]
where $H(x)=\Phi_{n_{k-1}}\left(  x^{\frac{n}{n_{k}}}\right)  \Phi_{n_{k-2}%
}\left(  x^{\frac{n}{n_{k-1}}}\right)  \cdots\Phi_{n_{1}}\left(  x^{\frac
{n}{n_{2}}}\right)  $.
\end{lemma}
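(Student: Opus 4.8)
The plan is to reduce the claimed factorization to the standard product formula $\Psi_n(x)=\prod_{d\mid n,\,d\neq n}\Phi_d(x)$ — which follows from $x^n-1=\prod_{d\mid n}\Phi_d(x)$ together with $\Psi_n=(x^n-1)/\Phi_n$ — and then to show that the right-hand side $\left(x^{n/p_1}-1\right)H(x)$ regroups exactly these factors, each appearing once, with the single omission of $\Phi_n$.

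First I would establish a substitution identity: whenever $\gcd(m,t)=1$ and $t$ is squarefree,
\[
\Phi_m(x^t)=\prod_{e\mid t}\Phi_{me}(x).
\]
This follows by induction on $\omega(t)$ from the elementary identity $\Phi_m(x^p)=\Phi_m(x)\,\Phi_{mp}(x)$, valid for any prime $p\nmid m$: peeling off one prime $q\mid t$ (writing $t=qt'$) gives $\Phi_m(x^t)=\Phi_m(x^{t'})\,\Phi_{mq}(x^{t'})$, and applying the inductive hypothesis to each factor — legitimate since $q$ is coprime to $m$ and, $t$ being squarefree, to every intermediate modulus — reassembles $\prod_{e\mid t'}\Phi_{me}\cdot\prod_{e\mid t'}\Phi_{mqe}=\prod_{e\mid t}\Phi_{me}$. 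Applying this with $m=n_j$ and $t=n/n_{j+1}=p_{j+2}\cdots p_k$ (here $\gcd(n_j,\,n/n_{j+1})=1$ because the primes $p_1,\dots,p_j$ and $p_{j+2},\dots,p_k$ are disjoint, and $n/n_{j+1}$ is squarefree) yields
\[
\Phi_{n_j}\!\left(x^{\,n/n_{j+1}}\right)=\prod_{e\mid\,n/n_{j+1}}\Phi_{n_j e}(x)=\prod_{d\in S_j}\Phi_d(x),
\qquad S_j:=\{\,d\mid n:\;n_j\mid d,\;p_{j+1}\nmid d\,\}.
\]
Likewise, since $n/p_1=p_2\cdots p_k$, I would write $x^{n/p_1}-1=\prod_{d\mid\,n/p_1}\Phi_d(x)=\prod_{d\in S_0}\Phi_d(x)$ with $S_0:=\{\,d\mid n:\;p_1\nmid d\,\}$.

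The combinatorial heart of the argument is then to check that $S_0,S_1,\dots,S_{k-1}$ partition the proper divisors of $n$. Identifying a divisor $d\mid n$ with the set $T\subseteq\{1,\dots,k\}$ of indices of its prime factors, membership $d\in S_j$ says exactly that $j+1$ is the smallest index absent from $T$ (for $j\ge 1$ this is $\{1,\dots,j\}\subseteq T$ with $j+1\notin T$; for $j=0$ it is $1\notin T$). Every $T\neq\{1,\dots,k\}$ has a unique smallest missing index, which lies in $\{1,\dots,k\}$, so each proper divisor belongs to exactly one $S_j$ with $0\le j\le k-1$, while the full set $T=\{1,\dots,k\}$ (that is, $d=n$) belongs to none. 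Multiplying the factorizations above therefore gives
\[
\left(x^{n/p_1}-1\right)H(x)=\prod_{j=0}^{k-1}\ \prod_{d\in S_j}\Phi_d(x)=\prod_{\substack{d\mid n\\ d\neq n}}\Phi_d(x)=\Psi_n(x),
\]
as desired.

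I expect the only genuine obstacle to be the careful verification of the substitution identity $\Phi_m(x^t)=\prod_{e\mid t}\Phi_{me}(x)$ — specifically, carrying the coprimality hypothesis through the induction so that the prime step $\Phi_m(x^p)=\Phi_m(x)\,\Phi_{mp}(x)$ (valid only for $p\nmid m$) remains applicable at every stage. Once this identity is in hand, the partition argument and the reduction to $\prod_{d\mid n,\,d\neq n}\Phi_d$ are routine bookkeeping.
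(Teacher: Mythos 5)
Your proof is correct, but it takes a genuinely different route from the paper's. The paper restates the classical identity $\Phi_{mp}(x)=\Phi_m(x^p)/\Phi_m(x)$ (for a prime $p\nmid m$) as a recursion for inverse cyclotomic polynomials, $\Psi_{mp}(x)=\Phi_m(x)\,\Psi_m(x^p)$, and simply iterates it: $\Psi_n(x)=\Phi_{n_{k-1}}(x)\,\Psi_{n_{k-1}}(x^{p_k})=\cdots=H(x)\,\Psi_{p_1}\!\left(x^{n/p_1}\right)$, finishing with $\Psi_{p_1}(y)=y-1$. You instead expand everything into irreducible factors: you prove the substitution identity $\Phi_m(x^t)=\prod_{e\mid t}\Phi_{me}(x)$ (for coprime $m,t$ with $t$ squarefree) by induction from that same prime-step identity, and then check that the sets $S_0,\dots,S_{k-1}$, classified by the smallest prime index missing from a divisor, partition the proper divisors of $n$, so the product of your factorizations reassembles $\prod_{d\mid n,\,d\neq n}\Phi_d=\Psi_n$. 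The two arguments rest on the same elementary identity, and your partition is precisely the combinatorial shadow of the paper's telescoping: the $j$-th step of their recursion peels off exactly the block $\prod_{d\in S_j}\Phi_d=\Phi_{n_j}(x^{n/n_{j+1}})$. What the paper's route buys is brevity --- no divisor bookkeeping, with the induction hidden in the ``repeatedly applying'' step. What your route buys is finer information: an explicit irreducible factorization of each factor of $H$ and a reusable substitution lemma, and it never needs to introduce the inverse-cyclotomic recursion at all. Your attention to carrying the coprimality and squarefreeness hypotheses through the induction is exactly the right point of care, and your argument handles it correctly.
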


\begin{proof}
Recall the well known property of cyclotomic polynomials
\[
\Phi_{np}(x)=\frac{\Phi_{n}(x^{p})}{\Phi_{n}(x)}%
\]
where $p$ is a prime and not a factor of $n$. In terms of the inverse
cyclotomic polynomial, it can be immediately restated as
\[
\Psi_{np}(x)=\Phi_{n}(x)\Psi_{n}(x^{p})
\]
Repeatedly applying the above equality on $\Psi_{n}(x)$, we have
\begin{align*}
\Psi_{n}(x)  &  =\Phi_{n_{k-1}}\left(  x^{\frac{n}{n_{k}}}\right)
\Psi_{n_{k-1}}\left(  x^{\frac{n}{n_{k-1}}}\right) \\
&  =\Phi_{n_{k-1}}\left(  x^{\frac{n}{n_{k}}}\right)  \Phi_{n_{k-2}}\left(
x^{\frac{n}{n_{k-1}}}\right)  \Psi_{n_{k-2}}\left(  x^{\frac{n}{n_{k-2}}%
}\right) \\
&  =\cdots\\
&  =\Phi_{n_{k-1}}\left(  x^{\frac{n}{n_{k}}}\right)  \Phi_{n_{k-2}}\left(
x^{\frac{n}{n_{k-1}}}\right)  \cdots\Phi_{n_{1}}\left(  x^{\frac{n}{n_{2}}%
}\right)  \Psi_{n_{1}}\left(  x^{\frac{n}{p_{1}}}\right) \\
&  =H(x)\;\Psi_{n_{1}}\left(  x^{\frac{n}{p_{1}}}\right)
\end{align*}
Recall that for a prime $p$, we have
\[
\Psi_{p}(x)=x-1
\]
Hence
\[
\Psi_{n}(x)=H(x)\left(  x^{\frac{n}{p_{1}}}-1\right)
\]

\end{proof}

\begin{proof}
[Proof of Theorem~\ref{thm:delta}]From Lemma~\ref{lem:middle_structure} we
have
\begin{align*}
\Psi_{n}(x)  &  =H(x)\left(  x^{\frac{n}{p_{1}}}-1\right) \\
&  =-H(x)+H(x)\cdot x^{\frac{n}{p_{1}}}%
\end{align*}
Note
\begin{align*}
\deg\left(  H(x)\right)   &  =\psi(n)-\frac{n}{p_{1}}\\
\mathrm{tdeg}\left(  H(x)\cdot x^{\frac{n}{p_{1}}}\right)   &  =\frac{n}%
{p_{1}}%
\end{align*}
We have
\[
\frac{n}{p_{1}}-\left(  \psi(n)-\frac{n}{p_{1}}\right)  =2\,\frac{n}{p_{1}%
}-\psi(n)=\delta^{-}(n)
\]
If $\delta^{-}(n)\leq0$, then there is nothing to show. If $\delta^{-}(n)>0$
then there is a gap in $\Psi_{n}(x)$ between $x^{\psi(n)-\frac{n}{p_{1}}}$ and
$x^{\frac{n}{p_{1}}}$. Therefore
\[
g(\Psi_{n})\geq\delta^{-}(n)
\]
The theorem has been proved.
\end{proof}

\subsection{Proof of Sufficient condition on $g(\Psi_{n})$
(Theorem~\ref{thm:condition_delta})}

There are two claims in Theorem~\ref{thm:condition_delta}. We will prove them
one by one.

\begin{proof}
[Proof of Theorem~\ref{thm:condition_delta} Claim 1]We will prove that
$g(\Psi_{n})=\delta^{-}(n)$ if $\delta^{-}(n)\geq\frac{1}{2}\frac{n}{p_{1}}$.
From Lemma~\ref{lem:middle_structure} we have
\[
\Psi_{n}(x)=-H(x)+H(x)\cdot x^{\frac{n}{p_{1}}}%
\]
Let%
\[
\delta^{-}\left(  n\right)  =\mathrm{tdeg}\left(  H(x)\cdot x^{\frac{n}{p_{1}%
}}\right)  -\deg\left(  H(x)\right)
\]
Note that if $\delta^{-}(n)\geq\deg\left(  H(x)\right)  $, then we obviously
have $g(\Psi_{n})=\delta^{-}(n)$. In the following we simplify the expression
$\delta^{-}\left(  n\right)  $ and the condition $\delta^{-}(n)\geq\deg\left(
H(x)\right)  $. First, we simplify the expression $\delta^{-}\left(  n\right)
$.%
\begin{align*}
\delta^{-}\left(  n\right)   &  =\mathrm{tdeg}\left(  H(x)\cdot x^{\frac
{n}{p_{1}}}\right)  -\deg\left(  H(x)\right) \\
&  =\frac{n}{p_{1}}-\left(  \psi(n)-\frac{n}{p_{1}}\right) \\
&  =2\frac{n}{p_{1}}-\psi(n)
\end{align*}
Next, we simplify the condition $\delta^{-}(n)\geq\deg\left(  H(x)\right)  $.
\begin{align*}
\delta^{-}(n)\geq\deg\left(  H(x)\right)   &  \iff2\frac{n}{p_{1}}-\psi
(n)\geq\psi(n)-\frac{n}{p_{1}}\\
&  \iff3\,\frac{n}{p_{1}}-2\,\psi(n)\geq0\\
&  \iff\frac{3}{2}\,\frac{n}{p_{1}}-\psi(n)\geq0\\
&  \iff2\,\frac{n}{p_{1}}-\psi(n)\geq\frac{1}{2}\,\frac{n}{p_{1}}\\
&  \iff\delta^{-}(n)\geq\frac{1}{2}\,\frac{n}{p_{1}}%
\end{align*}
Therefore we have shown if $\delta^{-}(n)\geq\frac{1}{2}\,\frac{n}{p_{1}}$
then $g(\Psi_{n})=\delta^{-}(n)$ which proves the first claim of the theorem.
\end{proof}

Before we prove the second claim of Theorem \ref{thm:condition_delta}, we need
a technical lemma.

\begin{lemma}
\label{lem:suff2_maxg_smidg} If $p_{2}>(k-1)(2p_{1}-3)$ then $\delta
^{-}(n)\geq\;\frac{1}{2}\frac{n}{p_{1}}$.
\end{lemma}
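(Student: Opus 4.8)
The plan is to turn the inequality $\delta^{-}(n)\geq\frac{1}{2}\frac{n}{p_{1}}$ into an elementary statement about the ratio $\varphi(m)/m$, where $m=n/p_{1}=p_{2}\cdots p_{k}$, and then to bound that ratio below by a telescoping product. First I would reduce the claim. Since $\Phi_{n}\Psi_{n}=x^{n}-1$ we have $\psi(n)=n-\varphi(n)$, and writing $\varphi(n)=(p_{1}-1)\varphi(m)$ and $n/p_{1}=m$, the desired inequality $2\frac{n}{p_{1}}-\psi(n)\geq\frac{1}{2}\frac{n}{p_{1}}$ is equivalent to $\frac{3}{2}m\geq p_{1}m-(p_{1}-1)\varphi(m)$, that is to
\[
\frac{\varphi(m)}{m}\ \geq\ \frac{2p_{1}-3}{2(p_{1}-1)}.
\]
This is exactly the algebra already carried out in the proof of Claim~1, now solved for $\varphi(m)/m$.

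Next I would bound $\varphi(m)/m$ from below. Writing it as a product,
\[
\frac{\varphi(m)}{m}=\prod_{i=2}^{k}\frac{p_{i}-1}{p_{i}},
\]
I would use that $p_{2}<p_{3}<\cdots<p_{k}$ are distinct integers, so $p_{i}\geq p_{2}+(i-2)$, together with the fact that $\frac{x-1}{x}$ is increasing. This gives
\[
\frac{\varphi(m)}{m}\ \geq\ \prod_{j=0}^{k-2}\frac{p_{2}+j-1}{p_{2}+j}\ =\ \frac{p_{2}-1}{p_{2}+k-2},
\]
the product telescoping since each numerator cancels the previous denominator.

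Finally I would close the estimate. Cross-multiplying (all quantities are positive since $p_{1}\geq3$), the inequality $\frac{p_{2}-1}{p_{2}+k-2}\geq\frac{2p_{1}-3}{2(p_{1}-1)}$ is equivalent, after writing $2(p_{1}-1)=(2p_{1}-3)+1$, to $p_{2}-1\geq(k-1)(2p_{1}-3)$. Since $p_{2}$ and $(k-1)(2p_{1}-3)$ are integers, the hypothesis $p_{2}>(k-1)(2p_{1}-3)$ yields $p_{2}\geq(k-1)(2p_{1}-3)+1$, which is precisely this inequality; chaining the three displayed steps then proves the lemma.

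The step I expect to be the real obstacle is the second one. The naive bound $\prod_{i}(1-\tfrac{1}{p_{i}})\geq1-\sum_{i}\tfrac{1}{p_{i}}$ is too lossy here: combined with $\sum_{i\geq2}\tfrac{1}{p_{i}}\leq\tfrac{k-1}{p_{2}}$ it would only deliver the weaker threshold $p_{2}\geq(k-1)(2p_{1}-2)$, which exceeds the stated bound by $k-1$ and is therefore insufficient (and indeed one checks numerically, e.g.\ $p_{1}=3$, $p_{2}=7$, $k=3$, that $\sum\tfrac{1}{p_{i}}$ already violates the needed sum bound while the product inequality still holds). The point is that one must retain the \emph{multiplicative} structure of $\prod(p_{i}-1)/p_{i}$: replacing the primes by the consecutive integers $p_{2},p_{2}+1,\ldots,p_{2}+k-2$ makes the product telescope exactly to $(p_{2}-1)/(p_{2}+k-2)$, and it is this exact telescoping, rather than any additive approximation, that reproduces the constant $(k-1)(2p_{1}-3)$ on the nose.
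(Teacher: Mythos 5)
Your proof is correct and takes essentially the same approach as the paper's: after the same algebraic reduction (which the paper phrases as a chain of equivalences in $\varphi(n)/n$ rather than isolating $\varphi(m)/m$ with $m=n/p_{1}$), both arguments hinge on replacing $p_{3},\ldots,p_{k}$ by the consecutive integers $p_{2}+1,\ldots,p_{2}+k-2$ so that the product telescopes to $(p_{2}-1)/(p_{2}+k-2)$, and the same closing algebra recovers the exact threshold $(k-1)(2p_{1}-3)$. Your remark that an additive bound $\prod_{i}\left(1-\tfrac{1}{p_{i}}\right)\geq 1-\sum_{i}\tfrac{1}{p_{i}}$ would only give $(k-1)(2p_{1}-2)$ is also accurate, and correctly identifies why the multiplicative telescoping is the essential step.
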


\begin{proof}
Note%
\begin{align*}
&  \delta^{-}(n)\;\;\geq\;\;\frac{1}{2}\frac{n}{p_{1}}\\
&  \Longleftrightarrow\;\;\frac{3}{2}\;\frac{n}{p_{1}}\;\;\geq\;\;\psi(n)\\
&  \Longleftrightarrow\;\;\frac{3}{2}\;\frac{n}{p_{1}}\;\;\geq\;\;n-\varphi
(n)\\
&  \Longleftrightarrow\;\;\frac{3}{2}\;\frac{1}{p_{1}}\;\;\geq\;\;1-\frac
{\varphi(n)}{n}\\
&  \Longleftrightarrow\;\;\frac{3}{2}\;\frac{1}{p_{1}}\;\;\geq\;\;1-\left(
1-\frac{1}{p_{1}}\right)  \cdots\left(  1-\frac{1}{p_{k}}\right) \\
&  \Longleftrightarrow\;\;\frac{1}{2}\;\frac{1}{p_{1}}\;\;\geq\;\;1-\frac
{1}{p_{1}}-\left(  1-\frac{1}{p_{1}}\right)  \cdots\left(  1-\frac{1}{p_{k}%
}\right) \\
&  \Longleftrightarrow\;\;\frac{1}{2}\;\frac{1}{p_{1}}\;\;\geq\;\;\left(
1-\frac{1}{p_{1}}\right)  \left(  1-\left(  1-\frac{1}{p_{2}}\right)
\cdots\left(  1-\frac{1}{p_{k}}\right)  \right) \\
&  \Longleftrightarrow\;\;\frac{1}{2}\;\;\geq\;\;\left(  p_{1}-1\right)
\cdot\left(  1-\left(  1-\frac{1}{p_{2}}\right)  \cdots\left(  1-\frac
{1}{p_{k}}\right)  \right) \\
&  \boldsymbol{\Longleftarrow}\;\;\frac{1}{2}\;\;\geq\;\;\left(
p_{1}-1\right)  \cdot\left(  1-\left(  1-\frac{1}{p_{2}}\right)  \left(
1-\frac{1}{p_{2}+1}\right)  \cdots\left(  1-\frac{1}{p_{2}+k-2}\right)
\right) \\
&  \Longleftrightarrow\;\;\frac{1}{2}\;\;\geq\;\;\left(  p_{1}-1\right)
\cdot\left(  1-\left(  \frac{p_{2}-1}{p_{2}}\right)  \left(  \frac{p_{2}%
}{p_{2}+1}\right)  \left(  \frac{p_{2}+1}{p_{2}+2}\right)  \cdots\left(
\frac{p_{2}+k-3}{p_{2}+k-2}\right)  \right) \\
&  \Longleftrightarrow\;\;\frac{1}{2}\;\;\geq\;\;\left(  p_{1}-1\right)
\cdot\left(  1-\frac{p_{2}-1}{p_{2}+k-2}\right) \\
&  \Longleftrightarrow\;\;\frac{1}{2}\;\;\geq\;\;\left(  p_{1}-1\right)
\cdot\frac{k-1}{p_{2}+k-2}\\
&  \Longleftrightarrow\;\;\frac{p_{2}+k-2}{2}\;\;\geq\;\;(k-1)\left(
p_{1}-1\right) \\
&  \Longleftrightarrow\;\;p_{2}+k-2\;\;\geq\;\;(k-1)\left(  2p_{1}-2\right) \\
&  \Longleftrightarrow\;\;p_{2}\;\;\geq\;\;(k-1)\left(  2p_{1}-2\right)
-(k-2)\\
&  \Longleftrightarrow\;\;p_{2}\;\;\geq\;\;(k-1)\left(  2p_{1}-3\right)  +1\\
&  \Longleftrightarrow\;\;p_{2}\;\;>\;\;(k-1)\left(  2p_{1}-3\right)  .
\end{align*}
Therefore, if $p_{2}>(k-1)\left(  2p_{1}-3\right)  $, then $\delta^{-}%
(n)\geq\frac{1}{2}\frac{n}{p_{1}}$.
\end{proof}

\begin{proof}
[Proof of Theorem~\ref{thm:condition_delta} Claim 2]We will prove
\[
\lim_{b\rightarrow\infty}\frac{\#\left\{  n\;:\;p_{k}\leq b,\,p_{1}%
=p,\,\delta^{-}(n)\geq\frac{1}{2}\frac{n}{p_{1}}\right\}  }{\#\left\{
\,n\;:\;p_{k}\leq b,\,p_{1}=p\,\right\}  \hfill}=1
\]
Let $q_{i}$ be the $i$-th odd prime, that is, $q_{1}=3,\ q_{2}=5,\ q_{3}%
=7,\ q_{4}=11$, etc. Let $k\geq2$. Let $p=q_{v}$ and $b=q_{w}$. Then we have
\begin{align*}
&  \#\left\{  n\;:\;p_{k}\leq b,\,p_{1}=p\right\} \\
&  =\#\left\{  (p_{1},\dots,p_{k})\;:\;p_{1}<\dots<p_{k}\leq b,\,p_{1}%
=p\right\} \\
&  =\#\left\{  (q_{i_{1}},\dots,q_{i_{k}})\;:\;q_{i_{1}}<\dots<q_{i_{k}}\leq
q_{w},\ q_{i_{1}}=q_{v}\right\} \\
&  =\#\left\{  (i_{1},i_{2},\dots,i_{k})\;:\;i_{1}<i_{2}<\dots<i_{k}\leq
w,\,i_{1}=v\right\} \\
&  =\#\left\{  (i_{2},\dots,i_{k})\;:\;v+1\leq i_{2}<\dots<i_{k}\leq w\right\}
\\
&  =\#\left\{  (i_{2},\dots,i_{k})\;:\;v+1\leq i_{2}<\dots<i_{k}\leq v+\left(
w-v\right)  \right\} \\
&  =\binom{w-v}{k-1}%
\end{align*}
Thus%
\[
\#\left\{  n\;:\;p_{k}\leq b,\,p_{1}=p\right\}  =\binom{w-v}{k-1}%
\]
Note%
\begin{align*}
&  \#\left\{  n\;:\;p_{k}\leq b,\,p_{1}=p,\,\delta^{-}(n)\geq\;\frac{1}%
{2}\frac{n}{p_{1}}\right\} \\
&  =\#\left\{  (p_{1},\,\dots,p_{k})\;:\;p_{k}\leq b,\,p_{1}=p,\ \delta
^{-}(p_{1}\cdots p_{k})\geq\;\frac{1}{2}\frac{p_{1}\cdots p_{k}}{p_{1}%
}\right\} \\
&  \geq\#\{(p_{1},\dots,p_{k})\;:\;p_{k}\leq b,\,p_{1}=p,\,\ p_{2}%
>(k-1)(2p_{1}-3)\}\ \ \ \ \text{(from Lemma~\ref{lem:suff2_maxg_smidg})}\\
&  =\#\{(q_{i_{1}},\dots,q_{i_{k}})\;:\;q_{i_{1}}<\dots<q_{i_{k}}\leq
q_{w},\,q_{i_{1}}=q_{v},\,q_{i_{2}}>(k-1)(2q_{v}-3)\}\\
&  =\#\{(q_{i_{1}},\dots,q_{i_{k}})\;:\;q_{i_{1}}<\dots<q_{i_{k}}\leq
q_{w},\,q_{i_{1}}=q_{v},\,q_{i_{2}}\geq q_{y}\}\ \ \text{where }%
y=\operatorname*{argmin}\limits_{q_{i}>\left(  k-1\right)  \left(
2q_{v}-3\right)  }i\\
&  =\#\{(i_{1},\dots,i_{k})\;:\;i_{1}<\dots<i_{k}\leq w,\,i_{1}=v,\,i_{2}\geq
y\}\\
&  =\#\{(i_{2},\dots,i_{k})\;:\;v+1\leq i_{2}<\dots<i_{k}\leq w,\,i_{2}\geq
y\}\\
&  =\#\{(i_{2},\dots,i_{k})\;:\;\max\left\{  v+1,\,y\right\}  \leq i_{2}%
<\dots<i_{k}\leq w\}\\
&  =\#\{(i_{2},\dots,i_{k})\;:\;y\leq i_{2}<\dots<i_{k}\leq
w\}\ \ \ \ \text{(since }y\geq v+1\text{)}\\
&  =\binom{w-y+1}{k-1}%
\end{align*}
Thus we have%
\[
\#\left\{  n\;:\;p_{k}\leq b,\,p_{1}=p,\,\delta^{-}(n)\geq\;\frac{1}{2}%
\frac{n}{p_{1}}\right\}  \geq\binom{w-y+1}{k-1}%
\]
Note
\[
\lim_{b\rightarrow\infty}\frac{\#\left\{  n\;:\;p_{k}\leq b,\,p_{1}%
=p,\,\delta^{-}(n)\geq\;\frac{1}{2}\frac{n}{p_{1}}\right\}  \hfill}{\#\left\{
n\;:\;p_{k}\leq b,\,p_{1}=p\right\}  \hfill}\geq\lim_{w\rightarrow\infty}%
\frac{\binom{w-y+1}{k-1}}{\binom{w-v}{k-1}}=\lim_{w\rightarrow\infty}%
\frac{\frac{1}{\left(  k-1\right)  !}w^{k-1}+\cdots}{\frac{1}{\left(
k-1\right)  !}w^{k-1}+\cdots}=1
\]
Since
\[
\lim_{b\rightarrow\infty}\frac{\#\left\{  n\;:\;p_{k}\leq b,\,p_{1}%
=p,\,\delta^{-}(n)\geq\;\frac{1}{2}\frac{n}{p_{1}}\right\}  \hfill}{\#\left\{
n\;:\;p_{k}\leq b,\,p_{1}=p\right\}  \hfill}\leq1
\]
we can conclude%
\[
\lim_{b\rightarrow\infty}\frac{\#\left\{  n\;:\;p_{k}\leq b,\,p_{1}%
=p,\,\delta^{-}(n)\geq\;\frac{1}{2}\frac{n}{p_{1}}\right\}  \hfill}{\#\left\{
n\;:\;p_{k}\leq b,\,p_{1}=p\right\}  \hfill}=1
\]
which proves the second claim of the theorem.
\end{proof}

\section{Evidence for Equivalent condition on $g(\Phi_{n})$~(Conjecture
\ref{conj:condition_phi})}

\label{sec:support}

Let us recall the conjecture: $g(\Phi_{n})=\varphi(p_{1}\cdots p_{k-1})$ if
and only if $p_{k}>p_{1}\cdots p_{k-1}$. 
The conjecture is trivially true for $k=1$. In~\cite{Hong2012}, the conjecture
is proved for $k=2$. For $k\geq3$ the conjecture is still open. One way to
check (support or disprove) the conjecture is to compute $\Phi_{n}$ for many
$n$ with $k\geq3$ and to check whether the maximum gap is $\varphi(p_{1}\cdots
p_{k-1})$ or not. We did this for $n$ up to $40,000$, without finding any
counter-example. However, this method only shows that the conjecture is true
for finitely many such $n$.

In this section, we will describe an algorithm (Algorithm~\ref{alg:modulo})
which allows the conjecture to be checked for infinitely many such $n$ and we
will report that we have done so (Theorem~\ref{thm:verified}).   For the sake of notational
simplicity, let $m=p_{1}\cdots p_{k-1}$  and $p=p_{k}.$ Then the above
conjecture can be restated as: $g(\Phi_{mp})=\varphi(m)$ if and only if $p>m$.   The algorithm
(which will be given later) is based on the following theorem.

\begin{theorem}
[Invariance]\label{thm:modulo} Let $m$ be odd square-free. Let $p,\,p^{\prime
}>m$ be primes such that $p\equiv_{m}p^{\prime}$. Then
\[
g(\Phi_{mp})=g(\Phi_{mp^{\prime}})
\]

\end{theorem}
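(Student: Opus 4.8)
The plan is to show that the entire exponent support of $\Phi_{mp}$ is governed by data depending only on $m$ and $r:=\mathrm{rem}(p,m)$, so that the gap structure, and in particular its maximum, cannot change as $p$ ranges over a fixed residue class. The engine is a periodicity in the tail coefficients of $1/\Phi_m$. Using the identity $\Phi_{mp}(x)=\Phi_m(x^p)/\Phi_m(x)$ recalled in the proof of Lemma~\ref{lem:middle_structure}, I would write $1/\Phi_m(x)=-Q(x)\sum_{t\ge 0}x^{mt}$ with $Q(x)=(x^m-1)/\Phi_m(x)=\prod_{d\mid m,\,d<m}\Phi_d(x)$, so that the power-series coefficients $b_i$ of $1/\Phi_m$ satisfy $b_i=-q_{\,i\bmod m}$, where $q_s$ is the coefficient of $x^s$ in $Q$; these depend only on $i\bmod m$. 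Writing $\Phi_m(x)=\sum_j a_j x^j$, the coefficient of $x^N$ in $\Phi_{mp}$ is $c_N=\sum_j a_j b_{N-pj}$. For $N$ in the window $[pJ,p(J+1))$ (so $J=\lfloor N/p\rfloor\le\varphi(m)-1$), this collapses, using $p\equiv r\pmod m$, to $c_N=g_J\big((N-rJ)\bmod m\big)$ where $g_J(t)=-\sum_{i=0}^{J}a_{J-i}\,q_{(t+ri)\bmod m}$ is $m$-periodic and determined entirely by $m,r,J$. Hence the support of $\Phi_{mp}$ inside window $J$ is $\{N\in[pJ,p(J+1)):N\bmod m\in\tilde S_J\}$, where $\tilde S_J:=(S_J+rJ)\bmod m$ and $S_J:=\{t:g_J(t)\ne0\}$ depend only on $(m,r,J)$.

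I would then rule out empty windows, which is what keeps all gaps bounded. One checks that $g_J\equiv 0$ iff $\Phi_m\mid R_J$, where $R_J(x)=\sum_{i=0}^{J}a_{J-i}\,x^{(-ri)\bmod m}$, since $Q$ vanishes modulo every $\Phi_d$ with $d<m$ and is a unit modulo $\Phi_m$. Evaluating $R_J$ at a primitive $m$-th root $\zeta$ and substituting $\xi=\zeta^{-r}$ (again primitive, as $\gcd(r,m)=1$) turns this into the vanishing at all primitive roots of the truncated polynomial $A_J(y)=\sum_{i=0}^{J}a_{J-i}y^i$, i.e.\ $\Phi_m\mid A_J$. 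But $A_J$ is nonzero with leading coefficient $a_0=1$ and degree $J\le\varphi(m)-1<\deg\Phi_m$, so $\Phi_m\nmid A_J$; hence every $\tilde S_J\ne\emptyset$. Consequently gaps inside any window are at most $m$, and gaps across a window boundary are at most $2m-1$, so the maximum gap is a purely local quantity.

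Next I would split the gaps of $\Phi_{mp}$ into within-window and cross-window gaps and show each is $p$-independent. Since $p>m$, every window contains a full period, so the last element of window $J$ sits at $p(J+1)-1-\theta_J$ and the first element of window $J+1$ at $p(J+1)+\iota_{J+1}$, where $\theta_J,\iota_{J+1}\in[0,m-1]$ depend only on $(m,r)$ and on $\tilde S_J,\tilde S_{J+1}$; thus every cross-window gap equals $\theta_J+\iota_{J+1}+1$, which is $p$-independent. Moreover, whenever $p\ge 2m$ each window contains at least two full periods, so every cyclic gap of $\tilde S_J$ is realized and the within-window maximum is exactly the largest cyclic gap $G_J$ of $\tilde S_J$, again a function of $(m,r,J)$ only. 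Therefore for all $p\ge 2m$ in the class, $g(\Phi_{mp})=\max_J\max\{G_J,\ \theta_J+\iota_{J+1}+1\}$, a quantity depending only on $m$ and $r$.

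The last step, and the main obstacle, is the single small prime per class. Because $(m,2m)$ meets each residue class modulo $m$ in exactly one integer, each class contains at most one prime in $(m,2m)$; every other prime in the class is $\ge 2m$ and is already handled. For such a small prime $m<p<2m$ a window may be too short to realize its largest cyclic gap $G_J$, so I would show that whenever $G_J$ is cut by a window boundary it reappears precisely as the adjacent cross-window gap, leaving the overall maximum unchanged. The truncation of the top window is sidestepped by the palindromy $\Phi_{mp}(x)=x^{\varphi(mp)}\Phi_{mp}(1/x)$, which lets one read off the maximum gap from the lower half of the exponents, where all windows have full length. I expect the delicate part to be exactly this boundary bookkeeping—matching each interior gap cut by a short window to the cross-window gap that replaces it—whereas the periodicity and non-emptiness arguments above are routine.
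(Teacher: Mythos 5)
Your first three steps are sound, and they constitute a genuinely different foundation from the paper's: the expansion $1/\Phi_m(x)=-Q(x)\sum_{t\ge0}x^{mt}$ gives a self-contained proof of the window periodicity that the paper instead imports as facts (C1)--(C3) from~\cite{Alkateeb2017}, and your non-vanishing argument ($g_J\equiv0$ would force $\Phi_m\mid A_J$ with $\deg A_J\le\varphi(m)-1$) cleanly rules out empty windows, a point the paper leaves implicit. The cross-window analysis is also fine for \emph{all} $p>m$: since $pJ\equiv rJ\pmod m$, window $J$ starts at phase $0$ of its pattern $S_J$, so the gap from the last element of window $J$ to the first element of window $J+1$ equals $r+\min S_{J+1}-\max\bigl(S_J\cap[0,r)\bigr)$ if $S_J\cap[0,r)\neq\emptyset$ and $m+r+\min S_{J+1}-\max S_J$ otherwise --- $p$-independent in both cases, uniformly in $q=\lfloor p/m\rfloor$. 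The genuine gap is the one you yourself flag and then defer: the single prime per class with $m<p<2m$. There the within-window analysis changes, and your proposed fix --- ``whenever $G_J$ is cut by a window boundary it reappears precisely as the adjacent cross-window gap'' --- is not bookkeeping; it is an unproven structural claim, and nothing in your argument supports it.

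To see exactly what is missing: because each window starts at phase $0$, when $q=1$ every \emph{linear} gap of $S_J$ is still realized, and the only cyclic gap that can disappear is the wrap-around gap $W_J=m+\min S_J-\max S_J$, which is lost precisely when $S_J\cap[0,r)=\emptyset$ (empty truncated block). In that situation the adjacent cross-window gap is $m+r+\min S_{J+1}-\max S_J$, so dominating $W_J$ by it requires $\min S_J\le r+\min S_{J+1}$ --- an inequality relating the \emph{trailing degrees of consecutive blocks}, i.e.\ a relation between $S_J$ and $S_{J+1}$ that your power-series computation says nothing about (and ``reappears precisely'' would even require equality). This is exactly the point where the paper's route diverges from yours: the paper never argues via ``two full periods per window'' at all. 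It uses the exact decomposition of~\cite{Alkateeb2017} --- window $i$ is $q$ copies of the block $f_{m,p,i,0}$ followed by $\operatorname{rem}(f_{m,p,i,0},x^r)$ --- to write both the within-window gaps (its eq.~(2)) and the cross-window gaps (its eq.~(3)) as expressions in $m$, $r$ and $f_{m,p,i,0}$ alone, valid uniformly in $q\ge1$, so that invariance follows from (C3) with no separate small-prime case. To complete your proof you must either establish the consecutive-block inequality above (or otherwise dominate each lost $W_J$ by a gap realized when $q=1$), or replace the two-period counting by the block-plus-truncation bookkeeping --- which is precisely the content the paper buys by citation.
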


\begin{proof}
Let $m$ be odd square-free. Let $p>m$ be prime. We will divide the proof into
several steps.

\begin{enumerate}
\item Let $q=\operatorname*{quo}\left(  p,m\right)  $ and
$r=\operatorname*{rem}\left(  p,m\right)  $. Let
\begin{align*}
\Phi_{mp}  &  =\sum_{i=0}^{\varphi(m)-1}\;f_{m,p,i}x^{ip} &  &  \deg
f_{m,p,i}<p\\
f_{m,p,i}  &  =\sum_{j=0}^{q}\;f_{m,p,i,j}x^{jm} &  &  \deg f_{m,p,i,j}<m
\end{align*}
We recall the following results from~\cite{Alkateeb2017}: For all $0\leq
i\leq\varphi(m)-1$, we have

\begin{enumerate}
\item[(C1)] $f_{m,p,i,0}=\cdots=f_{m,p,i,q-1}$

\item[(C2)] $f_{m,p,i,q}=\operatorname*{rem}\left(  f_{m,p,i,0},x^{r}\right)
$

\item[(C3)] $f_{m,p,i,0}=f_{m,p^{\prime},i,0}$ if $p\equiv_{m}p^{\prime}$
\end{enumerate}

\item From $\Phi_{mp}=\sum_{i=0}^{\varphi(m)-1}\;f_{m,p,i}x^{ip}$, we have%
\begin{equation}
g(\Phi_{mp})=\max\left\{  \max_{0\leq i\leq\varphi(m)-1}g(f_{m,p,i}%
),\,\max_{0\leq i\leq\varphi(m)-2}\left(  p+\mathrm{tdeg}(f_{m,p,i+1}%
)-\deg(f_{m,p,i})\right)  \right\}  \label{eq:mod1}%
\end{equation}

\item From $f_{m,p,i}=\sum_{j=0}^{q}f_{m,p,i,j}x^{jp}$ and (C1) and (C2), we
have%
\begin{align}
g(f_{m,p,i})  &  =\max\left\{  g(f_{m,p,i,0}),\,g(f_{m,p,i,q}%
),\,m+\mathrm{tdeg}(f_{m,p,i,0})-\deg(f_{m,p,i,0})\right\} \nonumber\\
&  =\max\left\{  g(f_{m,p,i,0}),\,m+\mathrm{tdeg}(f_{m,p,i,0})-\deg
(f_{m,p,i,0})\right\}  \label{eq:mod2}%
\end{align}

\item From $p-qm=r$, we have%
\begin{align}
p+\mathrm{tdeg}(f_{m,p,i+1})-\deg(f_{m,p,i})  &  =
\begin{cases}
p+\mathrm{tdeg}(f_{m,p,i+1,0})-((q-1)m+\deg(f_{m,p,i,0})) & \text{if }
f_{m,p,i,q}=0 \nonumber\\
p+\mathrm{tdeg}(f_{m,p,i+1,0})-(qm +\deg(f_{m,p,i,q})) & \text{else}%
\end{cases}
\\
&  =
\begin{cases}
r+m+\mathrm{tdeg}(f_{m,p,i+1,0})-\deg(f_{m,p,i,0}) & \text{if }f_{m,p,i,q}=0\\
r+\mathrm{tdeg}(f_{m,p,i+1,0})-\deg(\operatorname*{rem} \left(  f_{m,p,i,0}%
,x^{r}\right)  ) & \text{else}%
\end{cases}
\label{eq:mod3}%
\end{align}

\item Combining the equalities (\ref{eq:mod1}), (\ref{eq:mod2}) and
(\ref{eq:mod3}), we see $g(\Phi_{mp})$ depends \emph{only on} $m,r$ and
$f_{m,p,i,0}$.

\item Let $p^{\prime}>m$ be a prime other than $p$. Then $g(\Phi_{mp^{\prime}%
})$ also depends \emph{only on} $m,r^{\prime}$ and $f_{m,p^{\prime},i,0}$.

\item Suppose $p\equiv_{m}p^{\prime}$. Then obviously $r=r^{\prime}$.
Furthermore from (C3), we have $f_{m,p,i,0}=f_{m,p^{\prime},i,0}$. Thus
$g(  \Phi_{mp})  =g(  \Phi_{mp^{\prime}})  $.
\end{enumerate}
\end{proof}

\noindent From the above theorem (Theorem~\ref{thm:modulo}) we immediately
obtain the following algorithm.

\begin{algorithm}
[Checking the conjecture]\label{alg:modulo} \hspace{10pt}

\begin{description}
\item[In:] $m$, odd square-free, say $m=p_{1}\cdots p_{k-1}$ and $p_{1}%
<\cdots<p_{k}$

\item[Out:] truth of the claim that $\underset{\text{prime }p>p_{k-1}%
}{\forall}\left[  \ g(\Phi_{mp})=\varphi(m)\ \Longleftrightarrow
\ p>m\ \right]  $
\end{description}

\begin{enumerate}
\item for $p$ from $p_{k-1}+1$ to $m-1$, p prime, do

\begin{enumerate}
\item $F\leftarrow\Phi_{mp}$

\item $g\leftarrow$ the maximum gap of $F$

\item if $g=\varphi(m)$ then return false
\end{enumerate}

\item for $r$ from $1$ to $m-1$, where $\gcd(m,r)=1$, do

\begin{enumerate}
\item $p\leftarrow$ the smallest prime larger than $m$ such that
$\mathrm{rem}(m,p)=r$

\item $F\leftarrow\Phi_{mp}$

\item $g\leftarrow$ the maximum gap of $F$

\item if $g\neq\varphi(m)$ then return false
\end{enumerate}

\item return true
\end{enumerate}
\end{algorithm}

\noindent We have implemented the above algorithm in C language. The
cyclotomic polynomials were computed using the algorithm called Sparse Power
Series (Algorithm 4 in~\cite{Arnold2011}) because it is the fastest known
algorithm for inputs where $p$ is not very big compared to $m$. The code for
the algorithm has been kindly provided by Andrew Arnold, one of the authors
of~\cite{Arnold2011}. By executing the program, so far we have proved the following.

\begin{theorem}
[Evidence of the conjecture for infinitely many primes]\label{thm:verified}For
all primes $p$ and $m<1000$, we have%
\[
g(\Phi_{mp})=\varphi(m)\ \ \text{if and only if \ }p>m
\]
In other words, for all $k$ and for all $p_{1},\ldots,p_{k}$ such that
$p_{1}\cdots p_{k-1}<1000,$ we have
\[
g(\Phi_{p_{1}\cdots p_{k}})=\varphi(p_{1}\cdots p_{k-1})\ \ \,\text{if and
only if }p_{k}>p_{1}\cdots p_{k-1}.
\]

\end{theorem}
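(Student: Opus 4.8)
The plan is to prove the statement by combining the structural invariance of Theorem~\ref{thm:modulo} with a finite computation, exactly as encoded in Algorithm~\ref{alg:modulo}. The difficulty is that the claim quantifies over \emph{all} primes $p$, of which there are infinitely many, whereas only finitely many $m$ (the odd square-free integers below $1000$) are in play. The observation that makes the problem finite is that, for fixed $m$, Theorem~\ref{thm:modulo} asserts that $g(\Phi_{mp})$ depends on $p$ only through its residue $\mathrm{rem}(p,m)$ whenever $p>m$. So I would fix an odd square-free $m<1000$, write $p_{k-1}$ for its largest prime factor, and split the relevant primes $p>p_{k-1}$ (those for which $p=p_k$ is the top factor of $mp$) into the finite range $p_{k-1}<p<m$ and the infinite range $p>m$.

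For the range $p>m$, which governs the ``if'' direction, I would argue as follows. Every prime $p>m$ satisfies $\gcd(p,m)=1$, so $r:=\mathrm{rem}(p,m)$ is one of the $\varphi(m)$ residues coprime to $m$. By Dirichlet's theorem each such residue class contains a prime exceeding $m$; let $p_r$ be the smallest one. By Theorem~\ref{thm:modulo}, $g(\Phi_{mp})=g(\Phi_{mp_r})$ for every prime $p>m$ with $\mathrm{rem}(p,m)=r$. Hence it suffices to compute $\Phi_{mp_r}$ for the single representative $p_r$ in each of the $\varphi(m)$ classes and check that $g(\Phi_{mp_r})=\varphi(m)$; if this holds for every class, then $g(\Phi_{mp})=\varphi(m)$ for all primes $p>m$, establishing one direction for infinitely many $p$ from finitely many evaluations.

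For the finite range $p_{k-1}<p<m$, which governs the ``only if'' direction in contrapositive form, there are only finitely many primes, so I would compute $\Phi_{mp}$ for each of them directly and verify $g(\Phi_{mp})\neq\varphi(m)$. Together with the previous step, this shows that $g(\Phi_{mp})=\varphi(m)$ forces $p>m$. Assembling the two ranges yields the equivalence for the fixed $m$, and running the resulting procedure (Algorithm~\ref{alg:modulo}) over every odd square-free $m<1000$ gives the theorem. The cyclotomic polynomials themselves would be produced by the Sparse Power Series method of~\cite{Arnold2011}, which is efficient precisely in the regime where $p$ is not enormous relative to $m$.

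The main obstacle is computational rather than conceptual: the reduction is entirely supplied by Theorem~\ref{thm:modulo}, but one must still evaluate $g(\Phi_{mp_r})$ for the representative prime in each of the $\varphi(m)$ coprime classes of every $m<1000$, and the degree $\varphi(m)(p_r-1)$ of $\Phi_{mp_r}$ can be large when the smallest prime $p_r$ in a given class sits well above $m$. Choosing the smallest representative per class keeps these degrees as small as possible, but confirming feasibility uniformly across all classes and all $m$, and trusting the correctness of both the implementation and the underlying Sparse Power Series routine, is where the real work lies.
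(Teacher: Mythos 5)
Your proposal is correct and follows essentially the same route as the paper: the reduction of the infinite family of primes $p>m$ to one representative per residue class coprime to $m$ via Theorem~\ref{thm:modulo}, the direct check of the finitely many primes $p_{k-1}<p<m$, and the sweep over all odd square-free $m<1000$ is precisely Algorithm~\ref{alg:modulo}, which is how the paper establishes Theorem~\ref{thm:verified}. Your explicit appeal to Dirichlet's theorem to guarantee a representative prime in each class is a minor formalization of a step the paper leaves implicit, not a different argument.
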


\noindent The above computation took 86 minutes on a MacBook Pro (CPU: 2.4 GHz
Intel Core i5, Memory: 16 GB 1600 MHz DDR3).  Of course, one could continue
to check larger $m$ values using larger computing resources.

\bibliographystyle{acm}
\bibliography{paper}

\end{document}